\documentclass[a4paper,11pt]{article}
\usepackage{geometry}
\usepackage[T1]{fontenc}
\usepackage[utf8]{inputenc}
\usepackage[english]{babel}
\usepackage{microtype}
\usepackage{braket}
\usepackage{amsmath}
\usepackage{amssymb}
\usepackage{amsthm, aliascnt}
\usepackage{mathtools}
\usepackage{mathrsfs}
\usepackage{xcolor}
\usepackage[hyphens]{url}
\usepackage{hyperref}
\usepackage{graphicx}
\usepackage{enumitem}
\usepackage{tikz}
\usepackage[maxbibnames=99]{biblatex}
\usepackage{csquotes}
\usetikzlibrary{patterns}
\usepackage{tikz}
\usepackage{esint}
\usepackage{mleftright}

\hypersetup{
                colorlinks=true,
                linkcolor={black},
                citecolor={black},
                breaklinks=true}

\theoremstyle{plain}
\newtheorem{teor}{Theorem}
\numberwithin{teor}{section}
\numberwithin{equation}{section}

\theoremstyle{definition}
\newaliascnt{defi}{teor}
\newtheorem{defi}[defi]{Definition}
\aliascntresetthe{defi}

\theoremstyle{plain}
\newaliascnt{lemma}{teor}
\newtheorem{lemma}[lemma]{Lemma}
\aliascntresetthe{lemma}
\theoremstyle{plain}
\newaliascnt{prop}{teor}
\newtheorem{prop}[prop]{Proposition}
\aliascntresetthe{prop}
\theoremstyle{plain}
\newaliascnt{conjecture}{teor}

\aliascntresetthe{conjecture}
\theoremstyle{plain}
\newaliascnt{cor}{teor}

\aliascntresetthe{cor}
\theoremstyle{definition}
\newaliascnt{ex}{teor}

\aliascntresetthe{ex}
\theoremstyle{definition}
\newaliascnt{oss}{teor}
\newtheorem{oss}[oss]{Remark}
\aliascntresetthe{oss}
\theoremstyle{plain}

\DeclarePairedDelimiter{\abs}{\lvert}{\rvert}
\DeclarePairedDelimiter{\norma}{\lVert}{\rVert}
\DeclareMathOperator{\sbv}{SBV}

\newcommand{\R}{\mathbb{R}}

\newcommand{\Hn}{\mathcal{H}^{n-1}}
\newcommand{\eps}{\varepsilon}

\DeclareMathOperator{\divv}{div}

\title{On the classical Reinforcement problem and Optimisation}
\author{E. Cristoforoni\textsuperscript{\textsc{(a)}}, C. Nitsch\textsuperscript{\textsc{(b)}}, C. Trombetti\textsuperscript{\textsc{(b)}} }
\date{}

\newcommand{\Addresses}{{
 \footnotesize 

\textsc{(a) Dipartimento di Matematica ``Federigo Enriques'', Universit\'a degli Studi di Milano La Statale, Via Saldini 50 20123 Milano Italia.}\par\nopagebreak 
 
 \medskip 
 
 \textit{E-mail address}, E.~Cristoforoni: \texttt{emanuele.cristoforoni@unimi.it} 

  \bigskip 
 
 \textsc{(b) Dipartimento di Matematica e Applicazioni ``R. Caccioppoli'', Universit\`a degli Studi di Napoli Federico II, Via Cintia, Complesso Universitario Monte S. Angelo, 80126 Napoli, Italia.}\par\nopagebreak

 \medskip 
 
 \textit{E-mail address}, C.~Nitsch: \texttt{c.nitsch@unina.it}

  \medskip 
 
 \textit{E-mail address}, C.~Trombetti: \texttt{cristina@unina.it} 
}}

\begin{document}

\maketitle

\Addresses

\begin{abstract}
    In the present survey, we consider the classical reinforcement problem for elliptic boundary value problems originally studied by Sanchez-Palencia in 1969. We focus on the seminar papers by Brezis, Caffarelli, \& Friedman, and by Acerbi \& Buttazzo, and discuss the related optimisation problems proposed by Friedman and by Buttazzo.
\end{abstract}

\begin{center}
    \begin{minipage}{.75\textwidth}
        \small
        \tableofcontents
    \end{minipage}
\end{center}

\section{Introduction}
Many problems in mathematical physics, such as in elasticity, electrostatics, or thermal conduction, are governed by elliptic (or parabolic) equations or variational inequalities whose coefficients have a jump across a smooth surface $S$. These models describe the behaviour of a physical quantity, $u$, in a medium, $D$, composed of two phases, $\Omega$ and $\Sigma$. The two phases come into contact across the smooth interface $S=\partial\Omega\cap\partial\Sigma$, and have different physical properties, such as elasticity or diffusivity coefficients, described by positive parameters $k_\Omega$ and $k_\Sigma$. Let $f$ be a given function. The prototype equation one should think of is the following 
\begin{equation}\label{eq0}
    \begin{cases}
        -k_\Omega\Delta u = f &\text{in } \Omega,\\[5 pt]
               -k_\Sigma\Delta u = f &\text{in } \Omega,\\[5 pt]
         u|_\Omega=u|_\Sigma &\text{on } S,\\[5 pt]
         k_\Omega \partial_\nu u|_\Omega = k_\Sigma \partial_\nu u|_\Sigma &\text{on }S\setminus\partial D,\\[5 pt]
         u=0 &\text{ on } \partial D,
    \end{cases}   
\end{equation}
where $\nu$ is normal to $S$. The condition 
\[u|_\Omega=u|_\Sigma\]
across the interface $S$, is a continuity condition which models the fact that the two phases $\Omega$ and $\Sigma$ are in perfect contact with each other, while the condition 
\begin{equation}\label{diffractionBC} k_\Omega \partial_\nu u|_\Omega = k_\Sigma \partial_\nu u|_\Sigma \quad\text{on }S\setminus\partial D,\end{equation}
 models the continuity of the flux across the interface. Condition \eqref{diffractionBC} is known as transmission, or diffraction, condition.\medskip
 
 More generally, let $A_1, A_2$ be a positive definite symmetric matrices. We can consider the following diffraction problem
 \begin{equation}\label{eqA}
 \begin{cases}
      -k_\Omega\divv(A_1 \nabla u) = f &\text{in } \Omega,\\[5 pt]
        -k_\Sigma\divv(A_2 \nabla u) = f &\text{in } \Sigma,\\[5 pt]
         u|_\Omega=u|_\Sigma &\text{on } S\setminus\partial D,\\[5 pt]
         k_\Omega \partial_{\nu_{A_1}} u|_\Omega = k_\Sigma \partial_{\nu_{A_2}} u|_\Sigma &\text{on }S\setminus\partial D,\\[5 pt]
         u=0 &\text{ on } \partial D,
 \end{cases}
 \end{equation}
 and $\partial_{\nu_A}u$ is the derivative in the conormal direction $\nu_A=A\nu$.
A weak solution to \eqref{eq0} is simply a function $u\in H^1_0(D)$ such that 
 \[k_\Omega \int_\Omega A_1\nabla u\cdot \nabla \varphi\,dx+k_\Sigma \int_\Sigma A_2\nabla u\cdot \nabla \varphi\,dx=\int_D f\varphi\,dx,\]
 for every $\varphi\in H^1_0(D)$, or, equivalently, the minimiser in $H^1_0(D)$ of the energy 
 \[k_\Omega\int_\Omega A_1\nabla v\cdot\nabla v\,dx+k_\Sigma\int_\Sigma A_2\nabla v\cdot\nabla v\,dx-2\int_D fv\,dx.\]
 Existence and regularity for the solutions to \eqref{eqA} have been widely studied in the '60s, in particular, if $A_1\in C^{1,\gamma}(\bar{\Omega}), A_2\in C^{1,\gamma}(\bar{\Sigma})$, $f\in C^{0,\gamma}(\bar{\Omega})\cap C^{0,\gamma}(\bar{\Sigma})$, and both the interface $S$ and the outer boundary $\partial D$ are of class $C^{1,1}$, then, the solution $u$, restricted to each phase, is $C^2$ with first derivative continuos up to the boundary with possibly the exception of the points in $S\cap\partial D$. We refer to \cite[\S 3.16]{LU68} for the precise statement (see also \cite{O61, LRU64, LU68}).\medskip

 Of particular interest in many physical models is the case in which the thickness of one of the phases, say $\Sigma$, is much smaller than the characteristic size of $D$, and the constants $ k_\Omega$ and $ k_\Sigma$ differ widely. Mathematically, this consists of the study of the \emph{asymptotic behaviour of the solutions} of \eqref{eqA} in the \emph{limit} as both the \emph{thickness} of $\Sigma$ goes to zero and the \emph{ellipticity constant} $k_\Sigma$ goes either to \emph{zero} or to \emph{infinity}.
 In \emph{elasticity} (or elasto-plasticity), the magnitude of the ellipticity constant is related to the material's hardness: a small ellipticity constant corresponds to a \emph{hard} material, whereas a large one corresponds to a \emph{soft} material. In this context, $D$ often represents the cross-section of a torqued beam composed of two (or more) phases. Then, $\Sigma$ is seen as a small quantity of \emph{extremely hard} (or \emph{extremely soft}) material added to the remaining material, $\Omega$, to \emph{reinforce} the beam. Given the previous interpretation, these types of problems are often referred to as \emph{reinforcement problems}, sometimes further divided into \emph{interior} reinforcement and \emph{boundary} reinforcement. Of particular significance in the case of boundary reinforcement is also the point of view of thermal conduction, in which the set $\Omega$ represents a conductor surrounded by an extremely thin layer, $\Sigma$, of extremely good (or extremely bad) \emph{insulating material}, in such framework, the solution $u$ in $D$ represents the (steady-state) \emph{temperature} of the configuration.\medskip
 
 The survey is structured as follows. 
 We start with a brief history of the problem, outlining the original result by Sanchez-Palencia. We then, in \autoref{sbsBCF} and \autoref{sbsAB}, prove the celebrated results by Brezis, Caffarelli \& Friedman \cite{BCF80} and by Acerbi \& Buttazzo \cite{AB86} respectively. In \autoref{sbsnew}, we give an overview of subsequent developments with particular attention to the case of different boundary conditions. Finally, in \autoref{sopt} we discuss the optimisation problem naturally arising from the limit equations.
 
 \section{A brief history of the problem}
 
 Consider the problem of describing the temperature of a system, D, composed of a thin metal sheet in contact with a relatively poor conductor, such as soil or insulating material. This problem is briefly described in the second edition of the book \textit{Conduction of Heat in Solids} by Carslaw, prepared by Jaeger in 1959 (see \cite{CJ59}). Let $\omega\subset\R^3\cap\set{z=0}$ be a bounded planar set and let $\Sigma_\eps=\omega\times(0,\eps)$ represent a thin metal sheet of constant thickness $\eps>0$, whose faces are both covered by insulating material $\Omega_\eps=D\setminus\Sigma_\eps$ so that the interface $S_\eps$ consists exactly of the set $\omega\times\set{0,\eps}$ (see \autoref{fig1}).
 \begin{figure}
     \centering
     \includegraphics[width=0.7\linewidth]{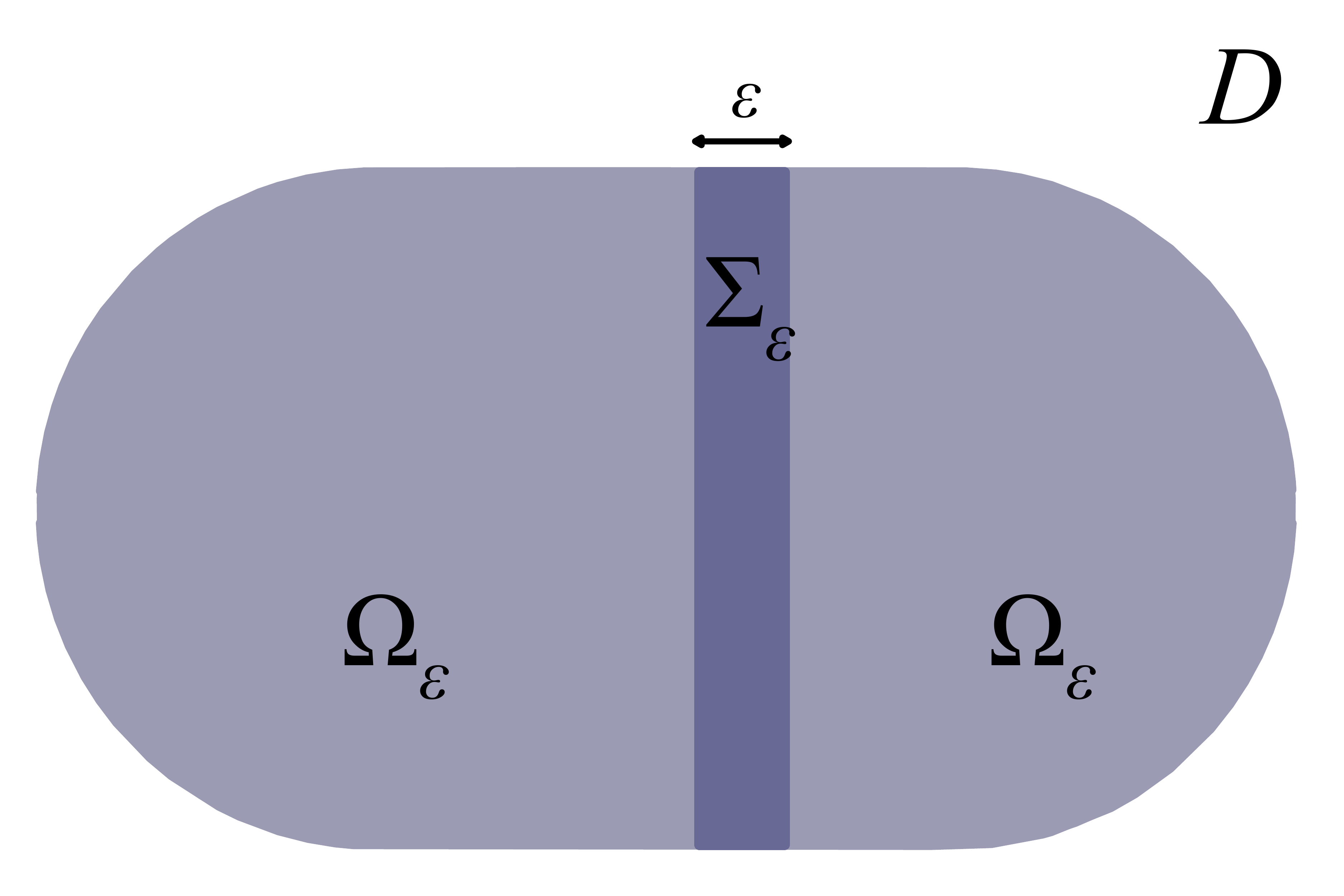}
     \caption{Thin metal sheet $\Sigma_\eps$ covered by insulating material $\Omega_\eps=D\setminus\Sigma_\eps$.}
     \label{fig1}
 \end{figure}
For simplicity' sake, let $k_\eps=k_{\Sigma_\eps}$ and $k=k_{\Omega_\eps}$. The steady-state temperature, $u_\eps$, will be a solution to 
\begin{equation}\label{ex1}    \begin{cases}
        -k\Delta u_{\eps} = f &\text{in } \Omega_\eps\\[5 pt]
         -k_\eps\Delta u_{\eps} = 0 &\text{in } \Sigma_\eps\\[5 pt]
         u_\eps|_{\Omega_\eps}=u_\eps|_{\Sigma_\eps} &\text{on } S_\eps,\\[5 pt]
         k \partial_\nu u_\eps|_{\Omega_\eps} = k_\eps \partial_\nu u_\eps|_{\Sigma_\eps} &\text{on }S_\eps\setminus\partial D,\\[5 pt]
         u_\eps=0 &\text{ on } \partial D.
    \end{cases}  \end{equation}
    In the book, the assumption made is that the metal sheet is so thin that the temperature in the sheet can be taken to be constant across its thickness. Then, considering the hat balance in the sheet, they simplify equation \eqref{ex1} by substituting the equation in $\Sigma_\eps$ and the transmission condition on $S_\eps$, with an effective boundary condition, of Wentzel type, relating the "two-dimensional" Laplacian on the flat surface of the sheet with the outward derivative in the direction orthogonal to it, that is 
 \[    \begin{cases}
        -k\Delta u_{\eps} = f &\text{in } \Omega_\eps,\\[5 pt]
               \partial_z u_{\eps} = 0 &\text{in } \Sigma_\eps,\\[5 pt]
         u_\eps|_{\Omega_\eps}=u_\eps|_{\Sigma_\eps} &\text{on } S_\eps,\\[5 pt]
         \eps k_\eps \left(\partial^2_{xx}u_\eps|_{\Sigma_\eps}+\partial^2_{yy}u_\eps|_{\Sigma_\eps}\right)  = - k \partial_z u_\eps|_{\Omega_\eps} &\text{on }\omega\times\set{0},\\[5 pt]
             \eps k_\eps \left(\partial^2_{xx}u_\eps|_{\Sigma_\eps}+\partial^2_{yy}u_\eps|_{\Sigma_\eps}\right)  = k \partial_z u_\eps|_{\Omega_\eps} &\text{on }\omega\times\set{\eps},\\[5 pt]
         u_\eps=0 &\text{ on } \partial D.
    \end{cases}  \]
    Notice that, while $\eps$ is assumed to be small, the ratio $k_\eps/k$, which represents the ratio of the thermal conductivities, is, by the physical assumptions on the problem, really high.\medskip

 To our knowledge, the first rigorous mathematical proof of this phenomenon can be found in the work of Sanchez-Palencia \cite{SP69}, published in 1969 (see also the paper by Hung and Sanchez-Palencia \cite{HSP74}, published in 1974). In the paper, the author studies the limit behaviour of the function $u_\eps$, solution to \eqref{ex1}, in the limit as $\eps$ goes to zero and $k_\eps$ diverges to infinity, the limit temperature then will be the solution to a boundary value problem depending on the limit, $\eta$, of the quantity
 $\eps k_\eps$. In particular, when $\eta$ is strictly positive and finite, the function $u_\eps$ converges to the solution to the following boundary value problem
 \[\begin{cases}
      -k\Delta u = f &\text{in } D\setminus\left(\omega\times\set{0}\right),\\[5 pt]
               u|_{\set{z>0}} = u|_{\set{z<0}} &\text{on }\omega\times\set{0},\\[5 pt]

        -\eta \left(\partial^2_{xx}u+\partial^2_{yy}u\right)  = k\left(\partial_z u|_{\set{z>0}}-\partial_z u|_{\set{z<0}}\right) &\text{on }\omega\times\set{0},\\[5 pt]
        \partial_\nu u = 0 &\text{on }\partial\omega\times\set{0},\\[5 pt]
         u=0 &\text{ on } \partial D.
 \end{cases}\]
When $\eta=0$, the effect of the plate becomes negligible, and the limit problem reduces to the usual Poisson problem
 \[\begin{cases}
     -k\Delta u = f &\text{in }D,\\[5 pt]
     u=0 &\text{on }\partial D.
 \end{cases}\]
On the other hand, if $\eta=+\infty$, in the limit, function $u$ will be forced to be constant in $\omega\times\set{0}$  and the limit boundary value problem will be the following 
 \[\begin{cases}
      -k\Delta u = f &\text{in } D\setminus\left(\omega\times\set{0}\right),\\[5 pt]
               u|_{\set{z>0}} =c= u|_{\set{z<0}}&\text{on }\omega\times\set{0},\\[5 pt]
         u=0 &\text{ on } \partial D,
 \end{cases}\]
 with the extra integral condition
 \[         \displaystyle\int_\omega \partial_z u|_{\set{z>0}} = \int_\omega\partial_z u|_{\set{z<0}} .\]
 As noted by the author, from the point of view of electrostatics, the case $\eta=+\infty$ corresponds to the definition, given in \cite{P56} (see also \cite[\S 4.16]{P63}), of an infinitely conducting and infinitely thin plate.
 \medskip

In 1970, Sanchez-Palencia \cite{SP70} (see also \cite{SP74}) studied the opposite case, namely, a thin sheet of insulating material in contact with a good conductor. Mathematically, this corresponds to the study of the limit behaviour of the function $u_\eps$, solution to \eqref{ex1}, in the case in which both $\eps$ and $k_\eps$ go to zero. In such a case, the limit problem will depend on the limit, $\alpha$, of the quantity $k_\eps/\eps$. When $\alpha$ is finite, the limit function will usually be discontinuous across $\omega\times\set{0}$, and will satisfy 
\[
\begin{cases}
      -k\Delta u = f &\text{in }  D\setminus\left(\omega\times\set{0}\right),\\[5 pt]
               \partial_z u|_{\set{z>0}} = \partial_z u|_{\set{z<0}} &\text{on }\omega\times\set{0},\\[5 pt]

        k \partial_z u  = \alpha\left(u|_{\set{z>0}} - u|_{\set{z<0}}\right) &\text{on }\omega\times\set{0},\\[5 pt]

         u=0 &\text{ on } \partial D.
 \end{cases}
\]
When $\alpha=+\infty$, the effect of the insulating sheet becomes negligible and the limit problem reduces to the usual Poisson problem in $D$. \medskip

Another interesting physical problem, described in the book by Carslaw and Jaeger, is the case of \emph{a thin surface skin of poor conductor}, $\Sigma_\eps$, surrounding a body $\Omega$. Let $\Sigma_\eps$ be the set of points at distance at most $\eps>0$ from $\Omega$, then, if $\eps$ is sufficiently small, the temperature $u_\eps$, solution to \eqref{ex1}, can be approximated by substituting the equation in $\Sigma_\eps$ with an effective boundary condition of Robin type, that is 
\[\begin{cases}
    -k\Delta u_{\eps} = f &\text{in } \Omega\\[5 pt]
    k\partial_\nu u_{\eps} + \frac{k_\eps}{\eps}u_\eps=0 &\text{on }\partial\Omega.
\end{cases}\]
This approximation is consistent with the results on boundary reinforcement by Caffarelli and Friedman \cite{CF80} in elasto-plasticity, as well as the results of Brezis, Caffarelli \& Friedman that we will discuss in the next section. In particular, if we denote by $\alpha$ the limit of the quantity $k_\eps/\eps$, when $\alpha>0$ is finite, the function $u_\eps$, solution to \eqref{ex1}, will converge to the solution of
\[\begin{cases}
    -k\Delta u = f &\text{in } \Omega\\[5 pt]
    k\partial_\nu u + \alpha u =0 &\text{on }\partial\Omega.
\end{cases}\]
As before, when $\alpha=+\infty$, the effect of the surface layer $\Sigma_\eps$ will become negligible. The case $\alpha=0$ may be a bit more involved and will be analysed in \autoref{alpha=0}.

\subsection{The result by Brezis, Caffarelli \& Friedman}\label{sbsBCF}

In this section, we discuss the results proved by Brezis, Caffarelli \& Friedman in \cite{BCF80}. In the paper, the authors prove convergence results for boundary and interior reinforcement problems for elliptic equations as well as the corresponding variational inequalities. As the techniques used in the various results are essentially the same, we will focus on the boundary reinforcement for elliptic equations.\medskip

Let $\Omega\subset\R^n$ be a bounded open set with $C^{1,1}$ boundary, let $h\colon\partial\Omega\to\R$ be a strictly positive Lipschitz function. For every $\eps>0$ let
\[\Sigma_\eps=\Set{\sigma+t\nu(\sigma)\colon \sigma\in\partial\Omega,\,t\in(0,\eps h(\sigma))},\]
where $\nu$ denotes the outward unit normal vector to $\partial\Omega$, an let $D_\eps=\bar{\Omega}\cup\Sigma_\eps$ (see \autoref{fig2}).
\begin{figure}
    \centering
    \includegraphics[width=0.7\linewidth]{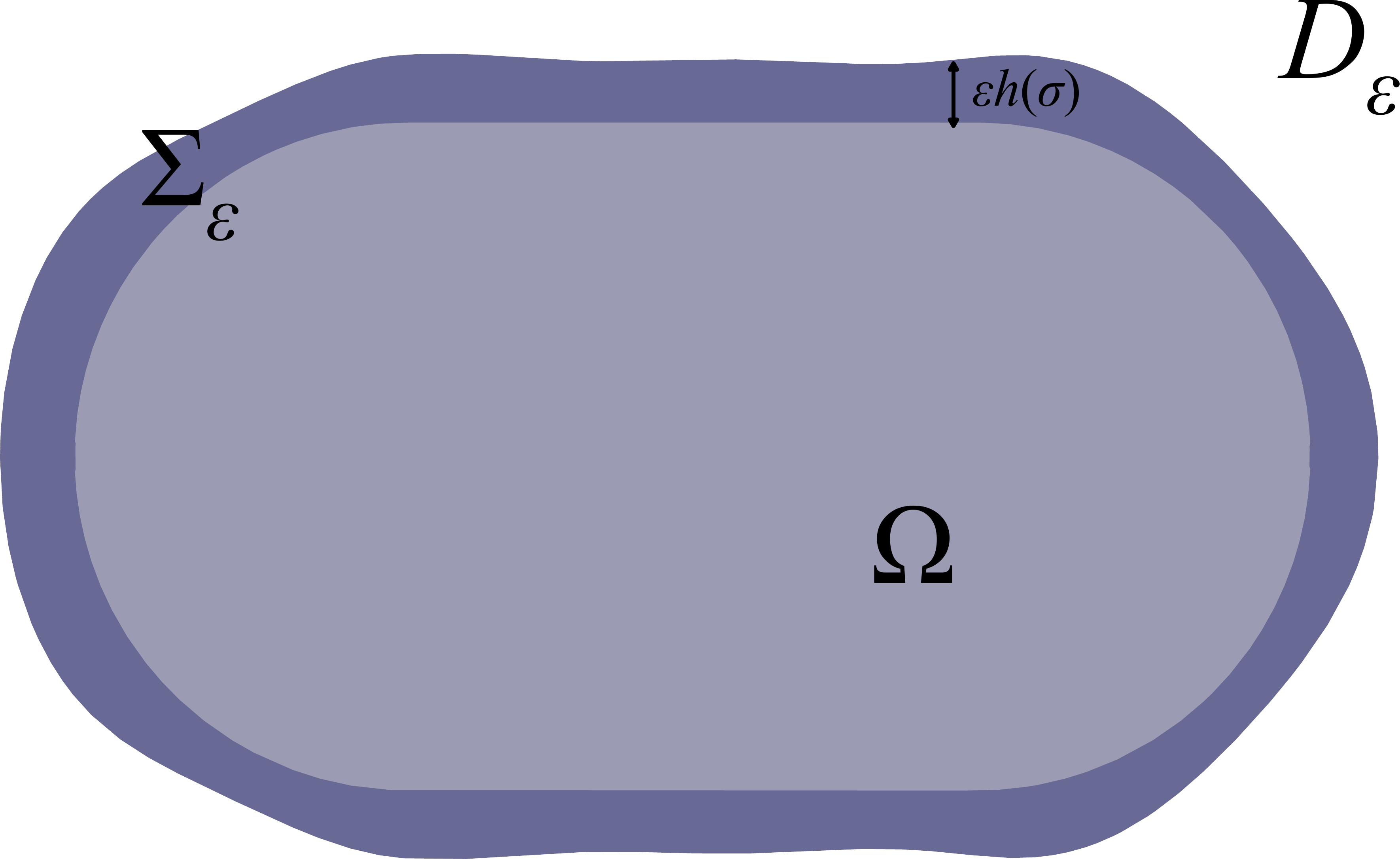}
    \caption{Reinforcement of $\Omega$ with a thin set of variable thickness $\Sigma_\eps$.}
    \label{fig2}
\end{figure}
Our regularity assumptions on $\Omega$ ensure us that, for every $\eps<\bar{\eps}$ sufficiently small, the map
\[(\sigma,t)\to\sigma+t\nu(\sigma)\]
is invertible on $\Sigma_\eps$, that is for every $x\in\Sigma_\eps$ there exist  unique $\sigma(x)\in\partial\Omega$ and $t(x)\in(0,\eps h(\sigma))$, such that 
\[x=\sigma(x)+t(x)\nu(\sigma(x)),\]
moreover $t(x)$ coincides with the distance, $d(x)$, of $x$ from $\Omega$, and $\sigma(x)$ is the orthogonal projection on $\partial\Omega$.\medskip

Let $A_1,A_2$ be bounded and uniformly elliptic matrices with coefficients in $C^{1,\gamma}(\bar{\Omega})$ and $ C^{1,\gamma}(\bar{\Sigma}_{\bar{\eps}})$, for some $\gamma>0$,  respectively and denote by $\nu_{A_1}=A_1 \nu$ and $\nu_{A_2}=A_2 \nu$ the conormal vector to $\partial\Omega$ from inside and outside $\Omega$ respectively. Let $f\colon\bar{D}_{\bar{\eps}}\to\R$ be a function in $C^{1,\gamma}(\bar{\Omega})\cup C^{1,\gamma}(\bar{\Sigma}_{\bar{\eps}})$, and let $k,k_\eps$ be two positive numbers with
\[\lim_{\eps\to0}k_\eps=0.\]
For every $\eps\in(0,\bar{\eps})$, let $u_\eps$ be the solution to the diffraction problem 
\begin{equation}\label{BCF}
    \begin{cases}
        -k\divv(A_1\nabla u_\eps)=f &\text{in }\Omega,\\[5pt]
        -k_\eps\divv(A_2\nabla u_\eps)=f &\text{in }\Sigma_\eps,\\[5pt]
        u_\eps|_\Omega = u_\eps|_{\Sigma_\eps} &\text{on }\partial\Omega,\\[5pt]
        k\partial_{\nu_{A_1}}u_\eps|_\Omega = k_\eps\partial_{\nu_{A_2}}u_\eps|_{\Sigma_\eps} &\text{on }\partial\Omega,\\[5pt]
        u_\eps = 0 &\text{on }\partial D_\eps.
    \end{cases}
\end{equation}
We will study the limit of the function $u_\eps$, as $\eps$ goes to zero, following the original proof by Brezis, Caffarelli, \& Friedman. As mentioned in the introduction, such a limit will depend on the limit
\[\alpha=\lim_{\eps\to0}\dfrac{k_\eps}{\eps}\in[0,+\infty].\]

\begin{oss}\label{conormalgraph}
We remark that in the original paper \cite{BCF80}, the set $\Sigma_\eps$ (denoted as $\Omega_1$ in the paper) is not described as the \emph{normal} graph of the function $h$, but rather as the \emph{conormal} graph instead. This difference will result in a slightly more involved boundary condition in the limit problem; on the other hand, it will give us a clearer distinction between the role of the geometry and that of the operator. By the ellipticity of $A_2$ we have that $\gamma_{A_2}=\nu_{A_2}\cdot\nu>c>0$ uniformly on $\partial\Omega$ and, by the regularity of $\Omega$ we can write $\Sigma_\eps$ as
\[\Sigma_\eps=\Set{\sigma+t\nu_{A_2}(\sigma)\colon \sigma\in\partial\Omega,\,t\in(0, h_\eps(\sigma))},\]
for some positive function $h_\eps$. Using the regularity of $\Omega$ and $h$, one may easily verify that 
\begin{equation}\label{gammaA}\lim_{\eps\to0}\dfrac{\eps h(\sigma)}{h_\eps (\sigma)}=\gamma_{A_2}(\sigma),\end{equation}
uniformly on $\partial\Omega$.
\end{oss}

\begin{oss}
    We remark that by a change of variable, we can rewrite the integrals on $\Sigma_\eps$ as
    \[\int_{\Sigma_\eps}v(x)\,dx=\int_{\partial\Omega}\int_0^{h_\eps(\sigma)}v(\sigma+t\nu_{A_2})\gamma_{A_2}(\sigma)(1+tq(t,\sigma))\,dt\,d\sigma,\]
    where $q$ is a bounded function. Moreover, by \eqref{gammaA}, if $\eps$ is sufficiently small we have 
    \begin{equation}\label{eqivint}\dfrac{1}{C}\int_{\partial\Omega}\int_0^{h_\eps(\sigma)}v(\sigma+t\nu_{A_2})\,dt\,d\sigma\le\int_{\Sigma_\eps}v(x)\,dx\le C\int_{\partial\Omega}\int_0^{h_\eps(\sigma)}v(\sigma+t\nu_{A_2})\,dt\,d\sigma.\end{equation}
\end{oss}
from the previous equation, if $v\in H^1(\Sigma_\eps)$ with zero trace on $\partial D_\eps$ we immediately have that 
\begin{equation}\label{epscontrol}\int_{\partial\Omega}v^2\,d\sigma\le \eps C \int_{\Sigma_\eps} \abs{\nabla v}^2\,dx.\end{equation}
Indeed 
\[v(\sigma)=-\int_0^{h_\eps} \partial_{\nu_{A_2}}v(\sigma+t\nu_{A_2})\,dt,\]
so that 
\[\int_{\partial\Omega} v^2\,d\sigma\le \int_{\partial\Omega}h_\eps(\sigma)\int_0^{h_\eps}\abs{\nu_{A_2}}^2\abs{\nabla v}^2\,dt\,d\sigma\le \eps C \int_{\Sigma_\eps} \abs{\nabla v}^2\,dx.\]

The main tool in the study of the limit behaviour of $u_\eps$ are the following $H^2$-estimates.

\begin{lemma}\label{BCF_lemma1}
    Let $u_\eps$ be the solution to \eqref{BCF}. There exists a positive constant $C>0$, such that, for every $\eps$ sufficiently small, the following estimates hold
    \begin{equation}\label{L2}
        \int_{D_\eps} u_\eps^2\,dx\le C\left(1+\dfrac{\eps^2}{k_\eps^2}\right),
    \end{equation}
    \begin{equation}\label{H1}
        \int_\Omega \abs{\nabla u_\eps}^2\,dx+k_\eps\int_{\Sigma_\eps}\abs{\nabla u_\eps}^2\,dx\le C\left(1+\dfrac{\eps}{k_\eps}\right),
    \end{equation}
        \begin{equation}\label{H2}
        \int_\Omega \abs{D^2 u_\eps}^2\,dx+k_\eps\int_{\Sigma_\eps}\abs{D^2 u_\eps}^2\,dx\le C\left(1+\dfrac{\eps}{k_\eps}\right),
    \end{equation}
    where $D^2 u_\eps$ is the Hessian of $u_\eps$.
\end{lemma}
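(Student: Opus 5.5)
The plan is to prove the three estimates in order. Each one feeds the next: the energy identity gives \eqref{H1} and \eqref{L2} at once, and the $H^2$ bound comes from difference quotients in tangential directions plus the equation for the normal direction.

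\emph{Step 1: energy estimate.} We test the weak formulation with $u_\eps$ itself, obtaining
\[
k\int_\Omega A_1\nabla u_\eps\cdot\nabla u_\eps\,dx+k_\eps\int_{\Sigma_\eps}A_2\nabla u_\eps\cdot\nabla u_\eps\,dx=\int_{D_\eps}fu_\eps\,dx .
\]
The right-hand side splits into $\Omega$ and $\Sigma_\eps$. To control the $\Omega$ part we need a Poincaré-type bound for $u_\eps$ on $\Omega$, which has no zero boundary condition there. We use $\int_\Omega u^2\le C(\int_\Omega\abs{\nabla u}^2+\int_{\partial\Omega}u^2)$ together with \eqref{epscontrol}, which gives $\int_{\partial\Omega}u_\eps^2\le C\eps\int_{\Sigma_\eps}\abs{\nabla u_\eps}^2$. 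For the $\Sigma_\eps$ part, the one-dimensional Poincaré inequality along the fibres, combined with \eqref{eqivint}, gives $\int_{\Sigma_\eps}u_\eps^2\le C\eps^2\int_{\Sigma_\eps}\abs{\nabla u_\eps}^2$; in particular this part of the $L^2$ norm is small. Writing $X=\int_\Omega\abs{\nabla u_\eps}^2$ and $Y=\int_{\Sigma_\eps}\abs{\nabla u_\eps}^2$, ellipticity and Young's inequality give
\[
X+k_\eps Y\le C\bigl(\sqrt{X}+\sqrt{\eps Y}\bigr)\le \tfrac12 X+\tfrac12 k_\eps Y+C\Bigl(1+\frac{\eps}{k_\eps}\Bigr),
\]
using $\sqrt{\eps Y}\le \delta k_\eps Y+C\eps/k_\eps$. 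This is \eqref{H1}. The bound \eqref{L2} then follows by inserting \eqref{H1} into the two Poincaré inequalities. The $\Omega$ part is bounded by $C(X+\eps Y)\le C(1+\eps/k_\eps)$, and $\eps/k_\eps\le 1+\eps^2/k_\eps^2$. The $\Sigma_\eps$ part is bounded by $C\eps^2 Y\le C\eps^2(1+\eps/k_\eps)/k_\eps$. Since $k_\eps\to0$ and $\eps/k_\eps\le 1+\eps^2/k_\eps^2$, this is at most $C(1+\eps^2/k_\eps^2)$.

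\emph{Step 2: $H^2$ estimate.} We localise near $\partial\Omega$ with a partition of unity and flatten the boundary using the coordinates $(\sigma,t)$, so that $\Sigma_\eps$ becomes the region $\{0<t<h_\eps(\sigma)\}$ over a flat piece. Interior $H^2$ regularity in $\Omega$ away from $\partial\Omega$ is standard and controlled by \eqref{H1} and \eqref{L2}.

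Near the boundary, we apply the Nirenberg difference quotient method in the tangential directions $\sigma$. The test function is $\varphi=-\Delta_{-h}(\zeta^2\Delta_h u_\eps)$, which remains admissible because it is continuous across $\partial\Omega$ and vanishes on $\partial D_\eps$. The outer boundary is a normal graph over $\partial\Omega$ with Lipschitz height $\eps h$, so it moves tangentially only by $O(\eps)\abs{\nabla h}$ under translation. Hence translates of $u_\eps$ have to be composed with a fibre-rescaling map $t\mapsto t\,h_\eps(\sigma)/h_\eps(\sigma+he_i)$, whose Jacobian is a bounded Lipschitz perturbation of the identity. After this, the weighted energy of the tangential derivatives, $\int_\Omega\abs{\nabla\partial_\sigma u_\eps}^2+k_\eps\int_{\Sigma_\eps}\abs{\nabla\partial_\sigma u_\eps}^2$, is bounded by $C(1+\eps/k_\eps)$. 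For this we use the regularity $A_i\in C^{1,\gamma}$ and $f\in C^1$, and we absorb the error terms through the same weighted Young inequality as in Step 1. The remaining second derivative $\partial_{tt}u_\eps$ is recovered from the equation in each phase, namely $-\divv(A_i\nabla u)=f/k$ or $f/k_\eps$, together with ellipticity. On $\Sigma_\eps$ multiplied by $k_\eps$, the source term becomes $\int_{\Sigma_\eps}f^2/k_\eps\le C\eps/k_\eps$, which is still admissible. This gives \eqref{H2}.

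\emph{Main obstacle.} The delicate point is the tangential difference quotient in the thin layer. Its thickness varies with $\sigma$, so translating $u_\eps$ does not preserve the domain. The terms produced by the rescaling map must be shown to be bounded by $C\eps\abs{\nabla h}$ times second derivatives, so that they can be absorbed without losing powers of $k_\eps$. A second point needing care is the transmission condition: the test function must be continuous across $\partial\Omega$, which is why the flattening has to use the same coordinates on both sides. Our first attempt will be to use the normal-graph coordinates on both sides of $\partial\Omega$, extended smoothly into $\Omega$, and to track the Jacobian errors explicitly.
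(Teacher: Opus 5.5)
Your overall route is the classical one the paper alludes to: the energy identity, then tangential difference quotients, then the equation for the normal second derivative, all while tracking $k_\eps$. The paper gives no proof beyond that remark. Step 1 is sound apart from a slip noted below. The genuine gap is in Step 2, exactly at what you call the main obstacle, and under the hypotheses of this section ($h$ merely Lipschitz) it cannot be closed.

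Write $\tau$ for the step and $\Phi_\tau(\sigma,t)=\bigl(\sigma+\tau e_i,\,t\,h_\eps(\sigma+\tau e_i)/h_\eps(\sigma)\bigr)$ for your rescaled translation. The mixed entry of $D\Phi_\tau$ is $t\,\nabla_\sigma\bigl(h_\eps(\sigma+\tau e_i)/h_\eps(\sigma)\bigr)$. Since $t\le h_\eps$ this entry is $O(\eps)$, but it is $O(\tau)$ only if $\nabla h$ is Lipschitz. The difference-quotient method needs $(\tilde A_\tau-A)/\tau$ bounded uniformly in $\tau$, where $\tilde A_\tau$ is the coefficient matrix of the equation satisfied by $u_\eps\circ\Phi_\tau$. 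Here that quotient contains difference quotients of $\nabla h$, i.e.\ $D^2h$. It multiplies $\nabla u_\eps\cdot\nabla v_\tau$, with $v_\tau=(u_\eps\circ\Phi_\tau-u_\eps)/\tau$. It is not an $O(\eps\abs{\nabla h})$ multiple of second derivatives, as you anticipate, so near a kink of $h$ nothing can be absorbed.

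No better choice of map will help, because \eqref{H2} is actually false there. Take $n=2$, $A_2=I$, $f\equiv1$ (so $u_\eps>0$), and let $h$ have a V-shaped minimum. Then $\partial D_\eps$ has a re-entrant corner of opening $\omega>\pi$. In polar coordinates $(r,\phi)$ at the corner, $u_\eps$ contains a singular part $c\,r^{\pi/\omega}\sin(\pi\phi/\omega)$ with $c>0$, whose Hessian is not in $L^2(\Sigma_\eps)$.

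What you need is $\partial D_\eps$ uniformly $C^{1,1}$. For a normal graph this means $h\in C^{1,1}$, and also one derivative more than $C^{1,1}$ on $\partial\Omega$, since both your $(\sigma,t)$-coordinates and the graph map involve $D\nu$. This is the regime of the classical diffraction regularity recalled in the Introduction, which the paper tacitly uses. There the commutator coefficients are $\nabla_\sigma h_\eps/h_\eps$ and $t\,D^2_\sigma h_\eps/h_\eps$. The first is bounded since $h_\eps\ge c\eps$ and $\abs{\nabla_\sigma h_\eps}\le C\eps$; the second since $t\le h_\eps$. So no power of $\eps$ is lost, and your weighted Young absorption goes through.

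Two accounting slips also need fixing. In Step 1, the claim $C(X+\eps Y)\le C(1+\eps/k_\eps)$ is false in general. \eqref{H1} only gives $\eps Y\le C(\eps/k_\eps)(1+\eps/k_\eps)$, and this is attained: when $\alpha=0$ and $\int_\Omega f\,dx\ne0$, one has $(u_\eps)_\Omega\sim\eps/k_\eps$. Your conclusion \eqref{L2} still holds.

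In Step 2, the interior estimates and all lower-order terms must be controlled by $\norma{\nabla u_\eps}_{L^2}$ and $f$ alone. You can do this by applying them to $u_\eps-(u_\eps)_\Omega$, since the operator has no zeroth-order term. If \eqref{L2} enters, as you write, you only get $C(1+\eps^2/k_\eps^2)$. That is too weak precisely when $\alpha=0$, where the exact power in \eqref{H2} is what turns \eqref{bcest} into \eqref{bcest2}.
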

The previous lemma is based on classic elliptic estimates for diffraction problems in which we keep track of the ellipticity constant $k_\eps$ in $\Sigma_\eps$.

\subsubsection*{Case 1. $\alpha\in(0,+\infty]$}
\begin{teor}\label{BCFTh1}
    Let $u_\eps$ be the solution to equation \eqref{BCF}. If 
    \[\alpha=\lim_{\eps\to0}\dfrac{k_\eps}{\eps}\in(0,+\infty],\]
    then $u_\eps$ converges, uniformly in compact subsets of $\Omega$, to the function $u$ solution to
    \begin{equation}\label{limcase1}\begin{cases}
    -k\divv(A_1\nabla u)=f &\text{in }\Omega,\\[5pt]
    \dfrac{hk}{\alpha \gamma_{A_2}}\partial_{\nu_{A_1}}u+u=0 &\text{on }\partial\Omega.
    \end{cases}\end{equation}
\end{teor}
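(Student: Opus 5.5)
Since $\alpha>0$, the ratio $\eps/k_\eps$ stays bounded, so \autoref{BCF_lemma1} gives uniform bounds for $u_\eps$ in $H^2(\Omega)$. By compactness (Rellich and Sobolev embeddings, plus interior elliptic regularity applied to $-k\divv(A_1\nabla u_\eps)=f$ with $C^{1,\gamma}$ data), a subsequence converges weakly in $H^2(\Omega)$, strongly in $H^1(\Omega)$, and uniformly on compact subsets of $\Omega$ to some $u$. In addition, the traces of $u_\eps$ and of $\partial_{\nu_{A_1}}u_\eps$ converge strongly in $L^2(\partial\Omega)$ to those of $u$. The interior equation passes to the limit trivially. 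If we then show that the limit solves \eqref{limcase1}, uniqueness of its solution yields convergence of the whole family.

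The main step is identifying the boundary condition. On $\Sigma_\eps$ I would use the coordinates $(\sigma,t)$ along the conormal direction $\nu_{A_2}$. By the transmission condition,
\[
k\,\partial_{\nu_{A_1}}u_\eps(\sigma)=k_\eps\,\partial_{\nu_{A_2}}u_\eps(\sigma^+)=k_\eps\,\nabla u_\eps\cdot\nu_{A_2},
\]
where the last expression is the derivative along the segment $t\mapsto\sigma+t\nu_{A_2}(\sigma)$. Since $u_\eps=0$ at $t=h_\eps(\sigma)$, the heuristic is that $u_\eps$ is nearly linear in $t$ across the thin layer, which gives
\[
\partial_{\nu_{A_2}}u_\eps\approx -\frac{u_\eps(\sigma)}{h_\eps(\sigma)},\qquad\text{hence}\qquad k\,\partial_{\nu_{A_1}}u_\eps\approx -\frac{k_\eps}{h_\eps}\,u_\eps\approx -\frac{k_\eps\gamma_{A_2}}{\eps h}\,u_\eps
\]
by \eqref{gammaA}. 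In the limit this is $\frac{hk}{\alpha\gamma_{A_2}}\partial_{\nu_{A_1}}u+u=0$.

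To make this rigorous I would write, for each $\sigma$ and $t\in(0,h_\eps)$,
\[
k_\eps\,\partial_{\nu_{A_2}}u_\eps(\sigma+t\nu_{A_2})-k\,\partial_{\nu_{A_1}}u_\eps(\sigma)=\int_0^t k_\eps\,\partial_s\bigl(\nabla u_\eps\cdot\nu_{A_2}\bigr)\,ds,
\]
and then integrate in $t$ from $0$ to $h_\eps$, using $\int_0^{h_\eps}\partial_t u_\eps\,dt=-u_\eps(\sigma)$. This produces
\[
k\,h_\eps\,\partial_{\nu_{A_1}}u_\eps+k_\eps u_\eps=R_\eps,
\]
with $|R_\eps|\le k_\eps h_\eps\int_0^{h_\eps}|D^2u_\eps|\,ds$. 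Multiplying by a test function $\varphi\in L^2(\partial\Omega)$, dividing by $\eps$ and applying Cauchy--Schwarz together with \eqref{eqivint} and \eqref{H2} gives
\[
\frac{1}{\eps}\int_{\partial\Omega}|R_\eps||\varphi|\le C\,\frac{k_\eps}{\eps}\,\eps^{3/2}\Bigl(\int_{\Sigma_\eps}|D^2u_\eps|^2\Bigr)^{1/2}\le C\,\frac{k_\eps}{\eps}\,\eps^{3/2}\,k_\eps^{-1/2}\Bigl(1+\frac{\eps}{k_\eps}\Bigr)^{1/2}.
\]
When $\alpha<\infty$ this is $O(\eps)\to0$. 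When $\alpha=\infty$, I would instead divide the identity by $k_\eps$. The remainder is then bounded by $C\eps^{3/2}k_\eps^{-1/2}\to0$, and the main term gives $u=0$ on $\partial\Omega$. The remainder also carries lower-order terms, coming from the variable direction $\nu_{A_2}$ and the Jacobian $1+tq$; these are bounded by $\nabla u_\eps$ and \eqref{H1}, in the same way. Passing to the limit, using $h_\eps/\eps\to h/\gamma_{A_2}$ uniformly, gives the Robin condition weakly on $\partial\Omega$, hence a.e.

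I expect the main obstacle to be making this one-dimensional integration rigorous: the $H^2$ bound in $\Sigma_\eps$ carries the weight $k_\eps$, and the error must be tracked carefully in the critical case where $k_\eps\sim\eps$. A related subtlety is that the Hessian estimate \eqref{H2} has to hold up to the interface on the $\Sigma_\eps$ side, so that the trace of $\nabla u_\eps$ from outside makes sense. I would try to take this directly from \autoref{BCF_lemma1}, approximating by smooth functions if needed.
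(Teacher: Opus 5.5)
Your proposal is correct, and its core step is the paper's. Integrating twice along the conormal segments $t\mapsto\sigma+t\nu_{A_2}(\sigma)$, using the transmission condition at $t=0$ and the Dirichlet condition at $t=h_\eps$, gives exactly the paper's identity
\[
u_\eps(\sigma)+\frac{k}{k_\eps}h_\eps(\sigma)\partial_{\nu_{A_1}}u_\eps|_\Omega(\sigma)=-\int_0^{h_\eps(\sigma)}\!\int_0^t\partial^2_{\nu_{A_2}^2}u_\eps(\sigma+s\nu_{A_2})\,ds\,dt .
\]
Yours is this identity multiplied by $k_\eps$, and the double integral is controlled by \eqref{H2} in the same way. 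Since $\nu_{A_2}(\sigma)$ is frozen along each segment, this identity in fact has no lower-order terms; the Jacobian enters only through \eqref{eqivint}.

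Where you differ is how the argument is closed.

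\emph{The paper's route.} It squares the remainder to get $\int_{\partial\Omega}\bigl|u_\eps+\frac{k}{k_\eps}h_\eps\partial_{\nu_{A_1}}u_\eps\bigr|^2\le C\eps^2$. It then pays for replacing $kh_\eps/k_\eps$ by $hk/(\alpha\gamma_{A_2})$ using only an $L^2(\partial\Omega)$ bound on $\partial_{\nu_{A_1}}u_\eps$. Finally it applies the Green function of the limit Robin (or Dirichlet) problem to $w_\eps=u_\eps-u$, whose boundary datum $R_\eps$ tends to $0$ in $L^2(\partial\Omega)$. This is a direct stability estimate for the whole family, with a rate governed by how fast $k_\eps/\eps\to\alpha$ and $\eps h/h_\eps\to\gamma_{A_2}$. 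It needs no compactness and no uniqueness argument.

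\emph{Your route.} You use the $H^2(\Omega)$ bound for compactness, getting strong $L^2(\partial\Omega)$ convergence of the traces of $u_\eps$ and $\partial_{\nu_{A_1}}u_\eps$. You identify the boundary condition of every limit point from a tested, weak form of the identity. You then conclude with uniqueness for \eqref{limcase1}, and with interior elliptic estimates for local uniform convergence.

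\emph{Trade-off.} Your argument is purely qualitative. In exchange, it avoids the Green function representation, needs the remainder to vanish only weakly, and yields weak $H^2(\Omega)$ and strong $H^1(\Omega)$ convergence of the whole family along the way.
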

\begin{proof}
      Let $u$ be the solution to \eqref{limcase1} and let $w_\eps=u_\eps-u$, then in $\Omega$ $w_\eps$ satisfies 
      \begin{equation}\label{eqdiff1}\begin{cases}
    -k\divv(A_1\nabla w_\eps)=0 &\text{in }\Omega,\\[5pt]
    \dfrac{hk}{\alpha \gamma_{A_2}}\partial_{\nu_{A_1}}w_\eps+w_\eps = R_\eps&\text{on }\partial\Omega,
    \end{cases}\end{equation}
    where \[R_\eps=\dfrac{hk}{\alpha\gamma_{A_2}}\partial_{\nu_{A_1}}u_\eps+u_\eps.\]
    If we prove that $R_\eps$ converges to zero in $L^2(\Omega)$ as $\eps$ goes to zero, we can use the Green function representation of the solution of \eqref{eqdiff1} to prove that $w_\eps$ converges uniformly to zero in compact subsets of $\Omega$.\medskip
     
    Recall that, by our assumptions, the functions $u_\eps|_\Omega$ and $u_\eps|_{\Sigma_\eps}$ are $C^2$ with continuous first derivatives up to $\partial\Omega$. Let $\sigma\in\partial\Omega$, using the transmission conditions on $\partial\Omega$, we have
    \begin{equation}\label{transmission}u_\eps(\sigma)+\dfrac{ k}{k_\eps}h_\eps(\sigma)\partial_{\nu_{A_1}}u_\eps|_\Omega(\sigma)=u_\eps(\sigma)+h_\eps(\sigma) \partial_{\nu_{A_2}}u_\eps|_{\Sigma_\eps}(\sigma).\end{equation}
    Notice that, by \eqref{gammaA},  
     \[\lim_{\eps\to0}\dfrac{h_\eps(\sigma)k}{k_\eps}=\dfrac{h(\sigma)k}{\alpha \gamma_{A_2}(\sigma)},\]
     uniformly on $\partial\Omega$.\medskip
     
    From the Dirichlet boundary condition, we have
    \[u_\eps(\sigma)=u_\eps(\sigma)-u_\eps(\sigma+h_\eps\nu_{A_2})=-\int_0^{h_\eps(\sigma)}\partial_{\nu_{A_2}}u_\eps(\sigma+t\nu_{A_2})\,dt,\]
    while
    \[\partial_{\nu_{A_2}}u_\eps(\sigma+t\nu_{A_2})=\int_0^t\partial^2_{\nu_{A_2}^2}u_\eps(\sigma+s\nu_{A_2})\,ds+\partial_{\nu_{A_2}}u_\eps|_{\sigma_\eps}(\sigma).\]
    Substituting in the previous equation, we get
    \[u_\eps(\sigma)=-h_\eps(\sigma)\partial_{\nu_{A_2}}u_\eps|_{\sigma_\eps}(\sigma)-\int_0^{h_\eps(\sigma)}\int_0^t\partial^2_{\nu_{A_2}^2}u_\eps(\sigma+s\nu_{A_2})\,ds\,dt,\]
    so that, substituting the formula for $u_\eps(\sigma)$, in the right-hand side of \eqref{transmission}, we have
    \[u_\eps(\sigma)+\dfrac{ k}{k_\eps}h_\eps(\sigma)\partial_{\nu_{A_1}}u_\eps|_\Omega(\sigma)=-\int_0^{h_\eps(\sigma)}\int_0^t\partial^2_{\nu_{A_2}^2}u_\eps(\sigma+s\nu_{A_2})\,ds\,dt.\]
    Then if we take take square integral on $\partial\Omega$ using the change of variable \eqref{eqivint}, and use the $H^2$-estimate \eqref{H2}, we have
    \begin{equation}\label{bcest}
        \int_{\partial\Omega}\abs*{u_\eps+\dfrac{ k}{k_\eps}h_\eps\partial_{\nu_{A_1}}u_\eps|_\Omega}^2\,d\sigma\le C\eps^3\int_{\Sigma_\eps}\abs{D^2 u_\eps}^2\,dx
        \le C\eps^2\dfrac{\eps}{k_\eps}\left(\dfrac{\eps}{k_\eps}+C\right),
    \end{equation}
    by the assumption on $\alpha$, $\eps/k_\eps$ is bounded so that
    \[\int_{\partial\Omega}\abs*{u_\eps+\dfrac{ k}{k_\eps}h_\eps\partial_{\nu_{A_1}}u_\eps|_\Omega}^2\,d\sigma\le C\eps^2.\]
    On the other hand, by standard Sobolev inequalities and using estimates \eqref{H1} and \eqref{H2}, we have 
    \[\int_{\partial\Omega}\abs*{\partial_{\nu_{A_1}} u_\eps}^2\,d\sigma\le C\left(\int_\Omega \abs{\nabla u_\eps}^2\,dx+\int_\Omega \abs{D^2 u_\eps}^2\,dx\right)\le C.\]
    Finally, we have 
    \[\begin{split}\int_{\partial\Omega}\abs*{u_\eps+\dfrac{hk}{\alpha \gamma_{A_2}}\partial_{\nu_{A_1}}u_\eps}^2\,d\sigma&\le \int_{\partial\Omega}\abs*{u_\eps+\dfrac{ k}{k_\eps}h_\eps\partial_{\nu_{A_1}}u_\eps|_\Omega}^2\,d\sigma+\norma*{\dfrac{hk}{\alpha\gamma_{A_2}}-\dfrac{h_\eps k}{k_\eps}}_\infty\int_{\partial\Omega}\abs*{\partial_{\nu_{A_1}} u_\eps}^2\,d\sigma\\[10 pt]
    &\le C\left(\eps^2+\norma*{\dfrac{hk}{\alpha\gamma_{A_2}}-\dfrac{h_\eps k}{k_\eps}}_\infty\right),
    \end{split}\]
    that is, $R_\eps$ converges to zero in $L^2(\Omega)$, which concludes the proof.
\end{proof}

\subsubsection*{Case 2. $\alpha=0$}

In the following, we assume $\Omega$ to be connected

\begin{teor}\label{alpha=0}
    Let $u_\eps$ be the solution to equation \eqref{BCF}, and let $(u_\eps)_{\Omega}$ be its integral mean on $\Omega$. If 
    \[\alpha=\lim_{\eps\to0}\dfrac{k_\eps}{\eps}=0,\]
    then $u_\eps-(u_\eps)_{\Omega}$ converges, uniformly in compact subsets of $\Omega$, to the function $u$ solution to 
    \begin{equation}\label{limcase2}\begin{cases}
    -k\divv(A_1\nabla u)=f &\text{in }\Omega,\\[5pt]
    \dfrac{hk}{\gamma_{A_2}}\partial_{\nu_{A_1}}u=-\zeta &\text{on }\partial\Omega,\\[7pt]
    (u)_\Omega=0,
    \end{cases}
    \end{equation}
    where
    \[\zeta=\dfrac{\displaystyle\int_\Omega f\,dx}{\displaystyle\int_{\partial\Omega}\dfrac{\gamma_{A_2}}{h}\,d\sigma}.\]
\end{teor}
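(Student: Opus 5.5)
Following the proof of \autoref{BCFTh1}, I will let $v_\eps=u_\eps-(u_\eps)_\Omega$ and aim to show that $v_\eps$ satisfies, in $\Omega$, a Neumann problem
\[
-k\divv(A_1\nabla v_\eps)=f\ \text{in }\Omega,\qquad \dfrac{hk}{\gamma_{A_2}}\partial_{\nu_{A_1}}v_\eps=-\zeta+R_\eps\ \text{on }\partial\Omega,\qquad (v_\eps)_\Omega=0,
\]
with $R_\eps\to0$ in $L^2(\partial\Omega)$. Then $w_\eps=v_\eps-u$ solves a homogeneous Neumann problem with zero mean and boundary datum $\gamma_{A_2}R_\eps/(hk)$. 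The Neumann Green function, or equivalently an $L^2$-trace estimate followed by interior regularity, gives $w_\eps\to0$ uniformly on compact subsets of $\Omega$. The compatibility condition for \eqref{limcase2} is exactly the definition of $\zeta$, since $\int_{\partial\Omega}k\partial_{\nu_{A_1}}u=-\int_\Omega f$.

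The starting point is the identity from the proof of \autoref{BCFTh1}, multiplied by $k_\eps/\eps$:
\[
\dfrac{k_\eps}{\eps}u_\eps(\sigma)+\dfrac{h_\eps(\sigma)}{\eps}k\partial_{\nu_{A_1}}u_\eps|_\Omega(\sigma)=-\dfrac{k_\eps}{\eps}\int_0^{h_\eps}\!\!\int_0^t\partial^2_{\nu_{A_2}^2}u_\eps\,ds\,dt .
\]
By \eqref{bcest}, the right-hand side has $L^2(\partial\Omega)$ norm squared bounded by $C\eps^2(k_\eps/\eps)(1+k_\eps/\eps)\to0$. Since $h_\eps/\eps\to h/\gamma_{A_2}$ uniformly, we get
\[
\dfrac{hk}{\gamma_{A_2}}\partial_{\nu_{A_1}}u_\eps=-\dfrac{k_\eps}{\eps}u_\eps|_{\partial\Omega}+o(1)
\]
in $L^2(\partial\Omega)$, provided $\|\partial_{\nu_{A_1}}u_\eps\|_{L^2(\partial\Omega)}$ is not too large. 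I will check this by rescaling the estimates: \eqref{H1} and \eqref{H2} now give only a bound of order $\eps/k_\eps$, which diverges.

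The main obstacle is therefore to show that $\frac{k_\eps}{\eps}u_\eps|_{\partial\Omega}=\zeta+o(1)$ in $L^2(\partial\Omega)$, together with uniform control of $\nabla u_\eps$ in $\Omega$.

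\textbf{Step 1 (mean value).} Integrating the equation over $\Omega$ and using the displayed relation gives
\[
\int_{\partial\Omega}\dfrac{\gamma_{A_2}}{h}\,\dfrac{k_\eps}{\eps}u_\eps=\int_\Omega f+o(1).
\]
So $\frac{k_\eps}{\eps}u_\eps$ has the correct weighted average. Here $u_\eps$ itself is of order $\eps/k_\eps$, consistent with \eqref{L2}.

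\textbf{Step 2 (oscillation).} It remains to show that the oscillation of $\frac{k_\eps}{\eps}u_\eps$ on $\partial\Omega$ vanishes. Equivalently, I will prove a uniform bound $\int_\Omega|\nabla u_\eps|^2\le C$, and then the $H^2$ bound in $\Omega$ independent of $\eps$. For this I will test the weak formulation with $u_\eps-c_\eps$, where $c_\eps$ is a suitable constant. The boundary term then behaves like a Robin term with coefficient $k_\eps/\eps\to0$, which penalises deviation from the constant only weakly. I will combine this with the Poincaré--trace inequality on the connected set $\Omega$ to bound $\|\nabla u_\eps\|_{L^2(\Omega)}$ uniformly. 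I will then redo the $H^2$ estimate of \autoref{BCF_lemma1} for $u_\eps-c_\eps$, obtaining a bound $C(1+\eps/k_\eps\cdot o(1))$; this is the step I expect to require the most care.

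\textbf{Step 3 (conclusion).} With $\nabla u_\eps$ bounded in $H^1(\Omega)$, the relation
\[
\dfrac{k_\eps}{\eps}u_\eps=-\dfrac{hk}{\gamma_{A_2}}\partial_{\nu_{A_1}}u_\eps+o(1)
\]
shows that $\frac{k_\eps}{\eps}u_\eps|_{\partial\Omega}$ is bounded in $L^2(\partial\Omega)$. Hence $u_\eps|_{\partial\Omega}=O(\eps/k_\eps)$, while its tangential gradient is bounded. Compactness, applied to a subsequence of $\frac{k_\eps}{\eps}u_\eps$, then forces the limit to have zero tangential gradient, since it equals $\frac{k_\eps}{\eps}$ times an $H^{1/2}$-bounded function. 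So the limit is a constant, and by Step 1 that constant is $\zeta$. This yields $R_\eps\to0$ and concludes the argument.
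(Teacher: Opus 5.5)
As written there is a gap. Your architecture matches the paper's. You reduce to a Neumann problem in $\Omega$ with data converging in $L^2(\partial\Omega)$, show that $\frac{k_\eps}{\eps}u_\eps$ is asymptotically constant on $\partial\Omega$, and fix the constant by $-k\int_{\partial\Omega}\partial_{\nu_{A_1}}u_\eps\,d\sigma=\int_\Omega f\,dx$. However, the a priori bound on which everything hinges is never obtained.

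You correctly flag that the rescaled identity gives $\frac{hk}{\gamma_{A_2}}\partial_{\nu_{A_1}}u_\eps=-\frac{k_\eps}{\eps}u_\eps+o(1)$ only once $\norma{\partial_{\nu_{A_1}}u_\eps}_{L^2(\partial\Omega)}$ is bounded, and Step 1 already uses this. Step 3 then takes that bound, and the tangential-gradient bound, from a uniform $H^2(\Omega)$ estimate that you only promise to get by redoing \autoref{BCF_lemma1} for $u_\eps-c_\eps$. The lemma cannot deliver this. The left-hand side of \eqref{H2} is unchanged by subtracting a constant, and its $\Sigma_\eps$-part is in general genuinely of order $\eps/k_\eps$. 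Indeed, the layer profile $u_\eps\approx c_\eps(1-t/h_\eps(\sigma))$ with $c_\eps\sim\eps/k_\eps$ has mixed derivatives of size $1/k_\eps$ on a set of measure $\sim\eps$ whenever $\gamma_{A_2}/h$ is not constant. Even granting the bound, a vanishing tangential gradient only makes the limit constant on each component of $\partial\Omega$, whereas only $\Omega$ is assumed connected.

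The missing idea is a one-line trace bound in the layer. By \eqref{epscontrol} and the $\Sigma_\eps$-part of \eqref{H1},
\[\int_{\partial\Omega}\dfrac{k_\eps^2}{\eps^2}u_\eps^2\,d\sigma\le C\dfrac{k_\eps}{\eps}\,k_\eps\int_{\Sigma_\eps}\abs{\nabla u_\eps}^2\,dx\le C\Big(1+\dfrac{k_\eps}{\eps}\Big)\le C.\]
Together with \eqref{bcest} multiplied by $k_\eps^2/\eps^2$, this immediately gives $\norma{\partial_{\nu_{A_1}}u_\eps}_{L^2(\partial\Omega)}\le C$. The paper then obtains the vanishing oscillation in two moves: interior bounds on $u_\eps-(u_\eps)_\Omega$ via the Neumann Green function, and a comparison of $u_\eps(\sigma)$ with $u_\eps(\sigma-\delta\nu)$ using $\frac{k_\eps^2}{\eps^2}\int_\Omega\abs{\nabla u_\eps}^2\,dx\le C\frac{k_\eps}{\eps}$.

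Alternatively, your Step 2 can be made rigorous with no $H^2$ bound at all.
\begin{itemize}
\item Test the equation in $\Omega$ only, since $u_\eps-c_\eps\notin H^1_0(D_\eps)$.
\item Insert the exact identity $k\partial_{\nu_{A_1}}u_\eps=\frac{k_\eps}{h_\eps}(E_\eps-u_\eps)$. Here $E_\eps$ is the double integral of $\partial^2_{\nu_{A_2}^2}u_\eps$, and $\norma{\frac{k_\eps}{h_\eps}E_\eps}_{L^2(\partial\Omega)}\le C\eps$ by \eqref{bcest}.
\item Take $c_\eps$ to be the $1/h_\eps$-weighted mean of $u_\eps$ on $\partial\Omega$.
\end{itemize}
This gives
\[k\int_\Omega A_1\nabla u_\eps\cdot\nabla u_\eps\,dx+\int_{\partial\Omega}\dfrac{k_\eps}{h_\eps}(u_\eps-c_\eps)^2\,d\sigma=\int_\Omega f(u_\eps-c_\eps)\,dx+\int_{\partial\Omega}\dfrac{k_\eps}{h_\eps}E_\eps(u_\eps-c_\eps)\,d\sigma.\]
A Poincaré inequality for this weighted boundary mean yields $\norma{\nabla u_\eps}_{L^2(\Omega)}\le C$. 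It is uniform in $\eps$ because $\eps/h_\eps\to\gamma_{A_2}/h$ uniformly, and it uses that $\Omega$ is connected.

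The trace inequality then gives $\norma{\frac{k_\eps}{\eps}(u_\eps-c_\eps)}_{L^2(\partial\Omega)}\le C\frac{k_\eps}{\eps}\to0$. The oscillation therefore vanishes around a single constant, so a disconnected boundary is harmless. The flux identity then gives $\frac{k_\eps}{\eps}c_\eps\to\zeta$. Compared with the paper, this energy route replaces the Green-function interior bound and the normal-segment comparison, and it yields weak $H^1(\Omega)$ convergence directly.
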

\begin{proof}
    Notice that inequality \eqref{bcest} still holds, so that, multiplying both sides by $k_\eps^2/\eps^2$ we get
    \begin{equation}\label{bcest2}
        \int_{\partial\Omega}\abs*{\dfrac{k_\eps}{\eps}u_\eps+\dfrac{ h_\eps}{\eps}k\partial_{\nu_{A_1}}u_\eps|_\Omega}^2\,d\sigma\le C\eps^2.
    \end{equation}
    On the other hand, by \eqref{epscontrol} and \eqref{H1} we have
    \begin{equation}\label{trest0}
        \int_{\partial\Omega} \dfrac{k_\eps^2}{\eps^2} u_\eps^2\,d\sigma\le C \dfrac{k_\eps}{\eps}k_\eps \int_{\Sigma_\eps}\abs{\nabla u_\eps}^2\,dx\le C\left(1+\dfrac{k_\eps}{\eps}\right)\le C.
    \end{equation}
    Joining \eqref{bcest2} and \eqref{trest0} we have
    \begin{equation}\label{conbound}\int_{\partial\Omega} \left(\partial_{\nu_{A_1}}u_\eps|_\Omega\right)^2\,d\sigma\le C.\end{equation}
    Using the Green function representation for the Neumann boundary condition in $\Omega$, we have
    \[u_\eps(x)=\int_\Omega G(x,y)f(y)\,dy+k\int_{\partial\Omega}G(x,\sigma)\partial_{\nu_{A_1}}u_\eps(\sigma)\,d\sigma+(u_\eps)_\Omega,\] so that, using \eqref{conbound}, we have that $u_\eps-(u_\eps)_{\Omega}$ is uniformly bounded, in $L^\infty$, in compact subsets of $\Omega$.\medskip

    Fix $\delta>0$ sufficiently small so that the function $\sigma\in\partial\Omega\mapsto\sigma_\delta=\sigma-\delta\nu(\sigma)\in\Omega$ is injective. For every $\sigma\in\partial\Omega$ we have 
    \[u_\eps(\sigma)-u_\eps(\sigma_\delta)=\int_0^\delta \partial_\nu u_\eps(\sigma-s\nu(\sigma))\,ds,\]
    so that
    \[\int_{\partial\Omega} \abs{u_\eps(\sigma)-u_\eps(\sigma_\delta)}^2\,d\sigma\le C\int_\Omega\abs{\nabla u_\eps}^2\,dx\]
    hence, multiplying both sides by $k_\eps^2/\eps^2$ and using \eqref{H1}, we have
    \begin{equation}\label{delta}
        \int_{\partial\Omega} \abs*{\dfrac{k_\eps}{\eps}u_\eps(\sigma)-\dfrac{k_\eps}{\eps}u_\eps(\sigma_\delta)}^2\,d\sigma\le C\dfrac{k_\eps}{\eps}.
    \end{equation}
Fix $x_0\in\Omega$, by the uniform boundedness of $u_\eps-(u_\eps)_\Omega$ in compact subsets of $\Omega$, we have that there exists a constant $C$ such that
\[\abs{u_\eps(\sigma_\delta)-u_\eps(x_0)}\le C,\]
then, letting $\zeta_\eps=k_\eps u_\eps(x_0)/\eps$, we find that
\begin{equation}\label{zeta}\int_{\partial\Omega} \abs*{\dfrac{k_\eps}{\eps}u_\eps(\sigma)-\zeta_\eps}^2\,d\sigma\le C\dfrac{k_\eps}{\eps}.\end{equation}
Then, recalling that $h_\eps/\eps$ converges uniformly on $\partial\Omega$ to $h/\gamma_{A_2}$, and using \eqref{bcest2} and \eqref{zeta}, we have 
\[\int_{\partial\Omega}\abs*{k\partial_{\nu_{A_1}}u_\eps+\dfrac{\gamma_{A_2}}{h}\zeta_\eps}^2\,d\sigma\xrightarrow{\eps\to0}0,\]
from which we deduce that
\[\lim_{\eps\to0}\zeta_\eps \int_{\partial\Omega} \dfrac{\gamma_{A_2}}{h}\,d\sigma=-\lim_{\eps\to0}k\int_{\partial\Omega}\partial_{\nu_{A_1}}u_\eps\,d\sigma=\int_\Omega f\,dx,\]
that is $\zeta_\eps$ converges to $\zeta$ and 
\begin{equation}\label{bclim}\int_{\partial\Omega}\abs*{k\partial_{\nu_{A_1}}u_\eps+\dfrac{\gamma_{A_2}}{h}\zeta}^2\,d\sigma\xrightarrow{\eps\to0}0.\end{equation}
Let $u$ be the solution to \eqref{limcase2}, then using the Green function representation for the Neumann boundary condition in $\Omega$ of $u_\eps-(u_\eps)_\Omega$ and $u$, and using \eqref{bclim}, we have the assertion.
\end{proof}

\begin{oss}
    Notice that, by \eqref{conbound} and the Poincaré-Wirtinger inequality, $u_\eps-(u_\eps)_\Omega$ is equibounded in $H^1(\Omega)$, so that it also converges to $u$ weakly $H^1(\Omega)$. Furthermore, we notice that if 
    \[\int_\Omega f\,dx\ne0,\]
    then $(u_\eps)_\Omega$ diverges as $\eps$ goes to zero. Indeed, assume by contradiction that for a subsequence (not relabelled)  $(u_\eps)_\Omega$ is bounded. Then $u_\eps$ is bounded in $H^1(\Omega)$ and in particular in $L^2(\partial\Omega)$. Hence, by \eqref{bcest2} we would have that $\partial_{\nu_{A_1}}u_\eps$ converges to zero in $L^2(\partial\Omega)$ but, for every $\eps$
    \[k\int_{\partial\Omega}\partial_{\nu_{A_1}}u_\eps\,d\sigma=\int_\Omega f\,dx\ne0,\]
    which is a contradiction. 
\end{oss}

Friedman, in \cite{F80}, proved analogous results for the convergence of the principal eigenvalue of the operator.
Consider the eigenvalue problem 
\begin{equation}\label{eig}
        \begin{cases}
        -k\divv(A_1\nabla \varphi)= \lambda \varphi&\text{in }\Omega,\\[5pt]
        -k_\eps\divv(A_2\nabla \varphi)=\lambda \varphi&\text{in }\Sigma_\eps,\\[5pt]
        \varphi|_\Omega = \varphi|_{\Sigma_\eps} &\text{on }\partial\Omega,\\[5pt]
        k\partial_{\nu_{A_1}}\varphi|_\Omega = k_\eps\partial_{\nu_{A_2}}\varphi|_{\Sigma_\eps} &\text{on }\partial\Omega,\\[5pt]
        \varphi = 0 &\text{on }\partial D_\eps,
    \end{cases}
    \end{equation}
    and denote by $\lambda_\eps$ the principal eigenvalue and by $\varphi_\eps\ge0$ the corresponding $L^2(D_\eps)$-normalised eigenfunction.
The main results of \cite{F80} can be summarised in the following theorem.
\begin{teor}
   Let 
   \[\alpha=\lim_{\eps\to0} \dfrac{k_\eps}{\eps}.\]
   If $\alpha=0$, $\lambda_\eps$ converges to zero and
   \[\lim_{\eps\to0}\dfrac{\eps}{k_\eps}\lambda_\eps=\int_{\partial\Omega}\dfrac{\gamma_{A_2}}{h}\,d\sigma.\]
   If $\alpha\in(0,+\infty]$, consider the eigenvalue problem
      \[\begin{cases}
        -k\divv(A_1\nabla \varphi)= \lambda \varphi&\text{in }\Omega,\\[5pt]
        \dfrac{hk}{\alpha \gamma_{A_2}}\partial_{\nu_{A_1}}u+u=0&\text{on }\partial\Omega,
    \end{cases}\]
   and denote by $\lambda_0$ the principal eigenvalue and by $\varphi_0\ge0$ the corresponding $L^2(\Omega)$-normalised eigenfunction. Then
   \[\lim_{\eps\to0}\lambda_\eps=\lambda_0\]  and $\varphi_\eps$ converges to $\varphi_0$ weakly in $H^2(\Omega)$.
\end{teor}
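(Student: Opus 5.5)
We would prove both parts through the Rayleigh quotient
\[
\lambda_\eps=\min_{v\in H^1_0(D_\eps)\setminus\{0\}}\frac{k\int_\Omega A_1\nabla v\cdot\nabla v\,dx+k_\eps\int_{\Sigma_\eps}A_2\nabla v\cdot\nabla v\,dx}{\int_{D_\eps}v^2\,dx},
\]
combined with the machinery already used in \autoref{BCFTh1} and \autoref{alpha=0}. Here $\varphi_\eps$ solves \eqref{BCF} with right-hand side $f=\lambda_\eps\varphi_\eps$.

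\textbf{Case $\alpha=0$: upper bound.} Use the test function $v\equiv1$ in $\Omega$, extended to $\Sigma_\eps$ by $v(\sigma+t\nu_{A_2})=1-t/h_\eps(\sigma)$. Then $\abs{\nabla v}\approx 1/h_\eps$ along $\nu_{A_2}$, and the tangential part is $O(1)$ because $h$ is Lipschitz. Using \eqref{eqivint} and \eqref{gammaA}, the Dirichlet energy on $\Sigma_\eps$ is computed as follows. One has $A_2\nabla v\cdot\nabla v\approx \gamma_{A_2}^{-1}\ldots$; more carefully, writing $\nabla v=-\nu/(\gamma_{A_2}h_\eps)+O(1)$, we get $A_2\nabla v\cdot\nabla v\,\gamma_{A_2}h_\eps\approx \gamma_{A_2}/(\gamma_{A_2}^2h_\eps)\cdot\gamma_{A_2}=1/h_\eps$. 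This gives
\[
k_\eps\int_{\Sigma_\eps}A_2\nabla v\cdot\nabla v\,dx=\frac{k_\eps}{\eps}\left(\int_{\partial\Omega}\frac{\gamma_{A_2}}{h}\,d\sigma+o(1)\right).
\]
Since $\int v^2\to\abs{\Omega}$, it follows that $\lambda_\eps\lesssim (k_\eps/\eps)\int_{\partial\Omega}\gamma_{A_2}/h\,d\sigma/\abs{\Omega}$, and in particular $\lambda_\eps\to0$. The constant in the statement should therefore carry the factor $1/\abs{\Omega}$, presumably normalised away. I would prove $\lim(\eps/k_\eps)\lambda_\eps=\abs{\Omega}^{-1}\int_{\partial\Omega}\gamma_{A_2}/h\,d\sigma$ and check it against the normalisation.

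\textbf{Case $\alpha=0$: lower bound.} This is the main obstacle. By the upper bound, $\lambda_\eps\varphi_\eps$ is bounded in $L^2$, so \autoref{BCF_lemma1} applies with $f_\eps=\lambda_\eps\varphi_\eps$, where the constant depends only on $\norma{f_\eps}_{L^2}$. (I would need to check that the lemma is stated with such dependence; the $C^{1,\gamma}$ hypothesis on $f$ is only used for regularity.) The energy bound gives $k\int_\Omega\abs{\nabla\varphi_\eps}^2\le\lambda_\eps\to0$. Hence $\varphi_\eps\to$ a constant $c$ in $H^1(\Omega)$, and by \eqref{epscontrol} the mass in $\Sigma_\eps$ is $o(1)$, so $c=\abs{\Omega}^{-1/2}$. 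Now run the argument of \autoref{alpha=0} with $f=\lambda_\eps\varphi_\eps$ and $\int_\Omega f=\lambda_\eps\int_\Omega\varphi_\eps$. The identity in \eqref{bcest2}--\eqref{bclim} yields $\zeta_\eps=(k_\eps/\eps)\varphi_\eps(x_0)\approx(k_\eps/\eps)c$. The integrated flux balance
\[
\lambda_\eps\int_\Omega\varphi_\eps\,dx=-k\int_{\partial\Omega}\partial_{\nu_{A_1}}\varphi_\eps\,d\sigma\approx\zeta_\eps\int_{\partial\Omega}\frac{\gamma_{A_2}}{h}\,d\sigma
\]
then gives, after dividing by $k_\eps/\eps$, that $(\eps/k_\eps)\lambda_\eps\,c\abs{\Omega}\to c\int_{\partial\Omega}\gamma_{A_2}/h\,d\sigma$. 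The delicate point is keeping every error term $o(k_\eps/\eps)$ rather than $o(1)$. To handle it, I would redo \eqref{bcest} with $f_\eps=O(\lambda_\eps)$, so that the estimates scale by $\lambda_\eps$.

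\textbf{Case $\alpha\in(0,+\infty]$.} Take the limit Robin eigenfunction $\varphi_0$ and extend it linearly across $\Sigma_\eps$ to zero, as above. This gives $\limsup\lambda_\eps\le\lambda_0$, since the layer energy tends to $\int_{\partial\Omega}\alpha\gamma_{A_2}\varphi_0^2/h\,d\sigma$, which is the Robin term. Then $\lambda_\eps\varphi_\eps$ is bounded in $L^2$, and \autoref{BCF_lemma1} gives bounded $H^2(\Omega)$ norms. Extract $\varphi_\eps\rightharpoonup\varphi$ weakly in $H^2(\Omega)$, strongly in $H^1$, with traces converging, and $\lambda_\eps\to\lambda$. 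The mass in $\Sigma_\eps$ vanishes by \eqref{epscontrol}, since $\eps/k_\eps$ is bounded, so $\norma{\varphi}_{L^2(\Omega)}=1$. Repeating the proof of \autoref{BCFTh1} shows that the boundary residual $R_\eps\to0$ in $L^2(\partial\Omega)$, so $\varphi$ is a nonnegative, nontrivial Robin eigenfunction with eigenvalue $\lambda\le\lambda_0$. Positivity forces it to be principal, so $\lambda=\lambda_0$ and $\varphi=\varphi_0$, and uniqueness upgrades the convergence to the whole family.
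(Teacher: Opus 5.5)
The survey only quotes this result from Friedman, so there is no proof in the paper to compare with; I am judging your proposal on its own. Your plan is the right one: Rayleigh-quotient upper bounds via the extension linear in $t$, then compactness and the boundary-residual argument of \autoref{BCFTh1}. Your correction of the constant is also right. Both your test-function computation and your flux balance $\lambda_\eps\int_\Omega\varphi_\eps\,dx=-k\int_{\partial\Omega}\partial_{\nu_{A_1}}\varphi_\eps\,d\sigma$, with $\varphi_\eps\approx\abs{\Omega}^{-1/2}$, give $\frac{\eps}{k_\eps}\lambda_\eps\to\abs{\Omega}^{-1}\int_{\partial\Omega}\frac{\gamma_{A_2}}{h}\,d\sigma$. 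The printed formula is missing the factor $\abs{\Omega}^{-1}$.

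\textbf{The gap.} You apply \autoref{BCF_lemma1} to $f_\eps=\lambda_\eps\varphi_\eps$ with a constant depending only on $\norma{f_\eps}_{L^2}$. No such version of \eqref{H2} can hold. Since $A_2\in C^1$, the equation in $\Sigma_\eps$ gives $\norma{f}_{L^2(\Sigma_\eps)}\le Ck_\eps\bigl(\norma{D^2u}_{L^2(\Sigma_\eps)}+\norma{\nabla u}_{L^2(\Sigma_\eps)}\bigr)$, hence
\[
k_\eps\int_{\Sigma_\eps}\abs{D^2u}^2\,dx\ge\frac{c}{k_\eps}\norma{f}^2_{L^2(\Sigma_\eps)}-k_\eps\int_{\Sigma_\eps}\abs{\nabla u}^2\,dx .
\]
Take $f$ with $\norma{f}_{L^2}=1$ supported in $\Sigma_\eps$. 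By the energy identity and the first inequality in \eqref{int1} with $p=2$, the last integral is $O(\eps^2/k_\eps)$. So the left-hand side is of order $1/k_\eps$, which is not $O(1+\eps/k_\eps)$.

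The factor $\eps/k_\eps$ in \eqref{H1}--\eqref{H2} is what $k_\eps^{-1}\norma{f}^2_{L^2(\Sigma_\eps)}$ produces for a fixed bounded $f$, because $\abs{\Sigma_\eps}\le C\eps$. You therefore have to rerun the estimates keeping $k_\eps^{-1}\norma{f}^2_{L^2(\Sigma_\eps)}$ as a separate quantity, and then control it for the eigenfunctions. This can be done. The same fibre inequality and the energy bound give $\int_{\Sigma_\eps}\varphi_\eps^2\,dx\le C\eps^2\int_{\Sigma_\eps}\abs{\nabla\varphi_\eps}^2\,dx\le C\eps^2\lambda_\eps/k_\eps$. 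Hence $k_\eps^{-1}\norma{\lambda_\eps\varphi_\eps}^2_{L^2(\Sigma_\eps)}\le C\lambda_\eps^3\eps^2/k_\eps^2$. This is bounded when $\alpha>0$, and is $O(k_\eps/\eps)$ when $\alpha=0$ by your upper bound, which is exactly the scaling your flux-balance argument needs. With this supplement your $\alpha\in(0,+\infty]$ argument goes through, including the weak $H^2(\Omega)$ convergence. Without it, neither the $H^2$ bound nor $R_\eps\to0$ is justified.

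\textbf{A simpler route for part of the argument.} The $\alpha=0$ lower bound, which you call the main obstacle, needs no $H^2$ information. Drop the $\Omega$-term of the Rayleigh quotient and argue as in \eqref{inf1}--\eqref{inf2} with $p=2$. Cauchy--Schwarz in the $A_2(\sigma)$ inner product gives $A_2\xi\cdot\xi\ge(\nu_{A_2}\cdot\xi)^2/\gamma_{A_2}$. Integrating along conormal fibres, on which $\varphi_\eps$ vanishes at $t=h_\eps(\sigma)$, you get
\[
\lambda_\eps\ge k_\eps\int_{\Sigma_\eps}A_2\nabla\varphi_\eps\cdot\nabla\varphi_\eps\,dx\ge(1-C\eps)\int_{\partial\Omega}\frac{k_\eps}{h_\eps}\varphi_\eps^2\,d\sigma\ge(1-o(1))\frac{k_\eps}{\eps}\int_{\partial\Omega}\frac{\gamma_{A_2}}{h}\varphi_\eps^2\,d\sigma .
\]
You already showed that $\varphi_\eps\to\abs{\Omega}^{-1/2}$ in $L^2(\partial\Omega)$, from $\int_\Omega\abs{\nabla\varphi_\eps}^2\,dx\to0$, the vanishing layer mass and $\varphi_\eps\ge0$. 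So this matches your upper bound directly.

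Similarly, for $\alpha>0$, $\lambda_\eps\to\lambda_0$ follows from \autoref{ABTh} with $p=2$ and $g(x,z)=A_2(x)z\cdot z$, together with equi-coercivity and the vanishing layer mass. The $H^2$ estimates are then needed only for the weak $H^2(\Omega)$ convergence.

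\textbf{Minor points.} Identifying the limit with $\varphi_0$ uses simplicity of $\lambda_0$, i.e.\ connectedness of $\Omega$, which the survey assumes only in Case 2. For $\alpha=+\infty$ the limit problem is the Dirichlet one, so the recovery sequence is simply $\varphi_0$ extended by zero.
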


\subsubsection*{Interior reinforcement}

We now give an overview of the result for the case of interior reinforcement.

Let $\Omega_1, D \subset\R^n$ be bounded open sets with  $C^{1,1}$ boundary. Let $h\colon\partial\Omega_1\to\R$ be a strictly positive Lipschitz function and, for every $\eps<\bar{\eps}$, let
\[\Sigma_\eps=\Set{\sigma+t\nu\colon\sigma\in\partial\Omega_1,\,t\in(0,\eps h(\sigma))}.\]
 Moreover, assume that $\bar{\Omega}_1\cup\Sigma_\eps$ is compactly contained in $D$, and denote by $\Omega_{2,\eps}=D\setminus\left(\overline{\Omega_1\cup\Sigma_\eps}\right)$ (see \autoref{fig3}). \begin{figure}
     \centering
     \includegraphics[width=0.7\linewidth]{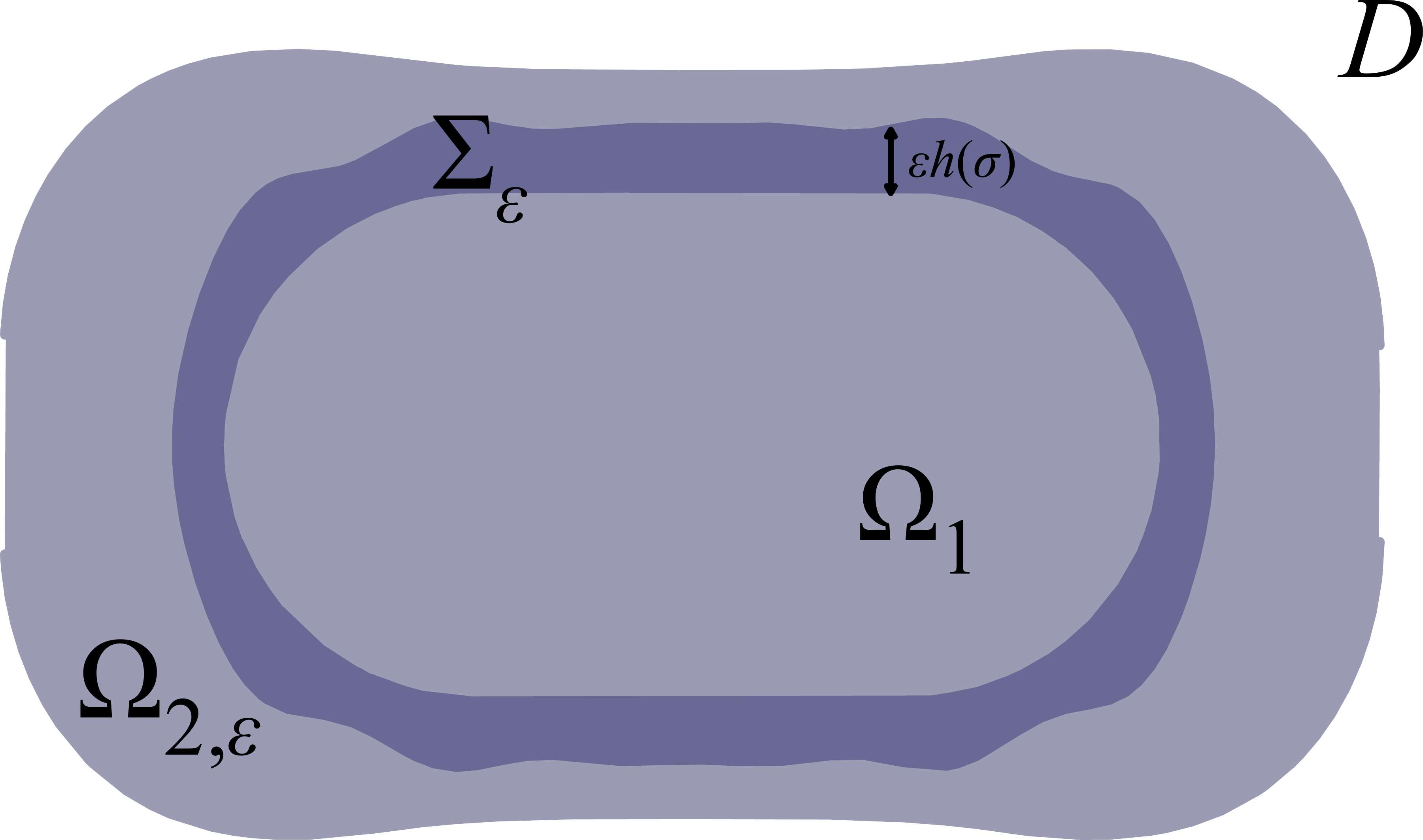}
     \caption{Interior reinforcement of $D$ with a thin set of variable thickness $\Sigma_\eps$. }
     \label{fig3}
 \end{figure}\medskip

 Let $A_1,A_2,A_3$ be bounded and uniformly elliptic matrices with smooth coefficients and let $f_1,f_2,f_3\colon \bar{D}\to\R $ be smooth functions. for every $\eps\in(0,\bar{\eps})$, let $v_\eps$ be the solution to the three-phase diffraction problem

 \begin{equation}\label{BCFint}
    \begin{cases}
        -k\divv(A_1\nabla v_\eps)=f_1 &\text{in }\Omega_1,\\[5pt]
        -k_\eps\divv(A_2\nabla v_\eps)=f_2 &\text{in }\Sigma_\eps,\\[5pt]
          -k\divv(A_3\nabla v_\eps)=f_3 &\text{in }\Omega_{2,\eps},\\[5pt]
        v_\eps|_\Omega = v_\eps|_{\Sigma_\eps} &\text{on }\partial\Omega_1,\\[5pt]
        k\partial_{\nu_{A_1}}v_\eps|_\Omega = k_\eps\partial_{\nu_{A_2}}v_\eps|_{\Sigma_\eps} &\text{on }\partial\Omega_1,\\[5pt]
        v_\eps|_{\Sigma_\eps} = v_\eps|_{\Omega_{2,\eps}} &\text{on }\partial\Sigma_\eps\setminus\partial\Omega_1,\\[5pt]
        k_\eps\partial_{\nu_{\eps,A_2}}v_\eps|_{\Sigma_\eps} = k\partial_{\nu_{\eps,A_3}}v_\eps|_{\Omega_{2,\eps}} &\text{on }\partial\Sigma_\eps\setminus\partial\Omega_1,\\[5pt]
        v_\eps = 0 &\text{on }\partial D,
    \end{cases}
\end{equation}
where $\nu_\eps$ is the normal direction to $\partial\Sigma_\eps\setminus\partial\Omega$, and $\nu_{\eps,A_2},\nu_{\eps,A_3}$ are the conormal directions $A_2 \nu_\eps$ and $A_3 \nu_\eps$.\medskip

As seen in the introduction to the classical results of Sanchez-Palencia, the limit function will usually be discontinuous; however, the transmission conditions for the conormal derivatives will carry through to the limit.  As for the case of boundary reinforcement, the limit problem will depend on the quantity
\[\alpha=\lim_{\eps\to0}\dfrac{k_\eps}{\eps}.\]
Let $\Omega_2=D\setminus\Omega_1$, we have the following theorem

\begin{teor}
Let $v_\eps$ be the solution to equation \eqref{BCFint}. If $\alpha>0$, then $v_\eps$ converges, uniformly in compact subsets of $\Omega_1$ and $\Omega_2$ to the function $v$ solution to
\[\begin{cases}
    -k\divv(A_1\nabla v)=f_1 &\text{in }\Omega_1,\\[5pt]
    -k\divv(A_3\nabla v)=f_3 &\text{in }\Omega_2,\\[5pt]
    \partial_{\nu_{A_1}} v = \partial_{\nu_{A_3}} v &\text{on }\partial\Omega_1,\\[5pt]
    v|_{\Omega_1}-v|_{\Omega_2}=-
    \dfrac{hk}{\alpha \gamma_{A_2}}\partial_{\nu_{A_1}}v &\text{on }\partial\Omega_1,\\[5pt]
    v=0 &\text{on }\partial D.\end{cases}
\]
\end{teor}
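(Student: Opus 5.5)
We mimic the proof of \autoref{BCFTh1}, now applied on both sides of the thin layer. The first step is to establish the analogue of \autoref{BCF_lemma1} for the three-phase problem \eqref{BCFint}:
\[
\int_{\Omega_1\cup\Omega_{2,\eps}}\bigl(\abs{\nabla v_\eps}^2+\abs{D^2v_\eps}^2\bigr)\,dx+k_\eps\int_{\Sigma_\eps}\bigl(\abs{\nabla v_\eps}^2+\abs{D^2v_\eps}^2\bigr)\,dx\le C\Bigl(1+\frac{\eps}{k_\eps}\Bigr).
\]
The right-hand side is bounded because $\alpha>0$. The $H^1$ part comes from testing the weak formulation with $v_\eps$, together with a Poincaré inequality on $D$ that is uniform in $\eps$. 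This is easier than in the boundary case, since $v_\eps=0$ on $\partial D$, which lies away from $\Sigma_\eps$. The $H^2$ part comes from the usual difference-quotient/flattening estimates for diffraction problems, in which we track the factor $k_\eps$. As before, we parametrise $\Sigma_\eps$ by $\sigma+t\nu_{A_2}(\sigma)$ with $t\in(0,h_\eps(\sigma))$, and we use \eqref{gammaA} and \eqref{eqivint}. We also need uniform-in-$\eps$ trace and Sobolev bounds on $\Omega_{2,\eps}$. These hold because $\Omega_{2,\eps}$ is a small $C^{1,1}$ perturbation of $\Omega_2$, which can be seen by pulling back via a diffeomorphism close to the identity.

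Next we derive the jump relation. Fix $\sigma\in\partial\Omega_1$ and let $\sigma^\eps=\sigma+h_\eps(\sigma)\nu_{A_2}(\sigma)$ be the corresponding point on the outer face of $\Sigma_\eps$. By Taylor expansion along the conormal segment,
\[
v_\eps(\sigma^\eps)-v_\eps(\sigma)=h_\eps\,\partial_{\nu_{A_2}}v_\eps|_{\Sigma_\eps}(\sigma)+\int_0^{h_\eps}\!\!\int_0^t\partial^2_{\nu_{A_2}^2}v_\eps\,ds\,dt .
\]
The inner transmission condition gives $k_\eps\partial_{\nu_{A_2}}v_\eps|_{\Sigma_\eps}=k\partial_{\nu_{A_1}}v_\eps|_{\Omega_1}$. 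Squaring, integrating over $\partial\Omega_1$ and applying the $H^2$ bound as in \eqref{bcest}, we obtain
\[
\int_{\partial\Omega_1}\Bigl|v_\eps(\sigma^\eps)-v_\eps(\sigma)-\frac{kh_\eps}{k_\eps}\partial_{\nu_{A_1}}v_\eps|_{\Omega_1}(\sigma)\Bigr|^2d\sigma\le C\eps^2 ,
\]
and $kh_\eps/k_\eps\to hk/(\alpha\gamma_{A_2})$ uniformly. The same estimate along the segment, combined with the transmission condition on the outer face, shows that $k\partial_{\nu_{A_1}}v_\eps|_{\Omega_1}(\sigma)$ and $k\partial_{\nu_{\eps,A_3}}v_\eps|_{\Omega_{2,\eps}}(\sigma^\eps)$ differ by $o(1)$ in $L^2(\partial\Omega_1)$. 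This step requires three ingredients:
\begin{enumerate}[label=(\roman*)]
\item the change of area between $\partial\Omega_1$ and the outer face;
\item the fact that $\nu_\eps(\sigma^\eps)\to\nu(\sigma)$, which uses that $h$ is Lipschitz, so the outer face has normal $\nu+O(\eps)$;
\item the bound $k_\eps\int_0^{h_\eps}\abs{D^2v_\eps}\,dt=O\bigl(\eps^{1/2}(k_\eps\cdot k_\eps\int\abs{D^2v_\eps}^2)^{1/2}\bigr)$. Since $k_\eps\int_{\Sigma_\eps}\abs{D^2v_\eps}^2\le C$, this term is $o(1)$ when $k_\eps\to0$. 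If $k_\eps$ does not tend to zero (for instance when $\alpha=\infty$ with $k_\eps$ bounded below), we instead argue directly with the weighted $H^2$ bound.
\end{enumerate}

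Then we pass to the limit. The key point is to turn the boundary data on the moving surface $\partial\Omega_{2,\eps}\setminus\partial D$ into data on $\partial\Omega_1$. We pull $v_\eps|_{\Omega_{2,\eps}}$ back to $\Omega_2$ by a $C^{1,1}$ diffeomorphism $\Phi_\eps$ that converges to the identity and maps $\partial\Omega_1$ onto the outer face, with $\Phi_\eps(\sigma)=\sigma^\eps$ to first order. The transplanted functions $\tilde v_\eps$ satisfy elliptic equations whose coefficients converge in $C^{0,1}$, and they are bounded in $H^2$. So along subsequences $v_\eps|_{\Omega_1}\rightharpoonup v_1$ in $H^2(\Omega_1)$ and $\tilde v_\eps\rightharpoonup v_2$ in $H^2(\Omega_2)$, and the traces and conormal derivatives converge strongly in $L^2(\partial\Omega_1)$ by compactness of the trace operators on $H^2$. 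The relations obtained in the previous step then pass to the limit and give
\[
\partial_{\nu_{A_1}}v_1=\partial_{\nu_{A_3}}v_2,\qquad v_1-v_2=-\frac{hk}{\alpha\gamma_{A_2}}\partial_{\nu_{A_1}}v_1\quad\text{on }\partial\Omega_1 .
\]
Note the sign: $v_\eps(\sigma^\eps)$ approximates $v|_{\Omega_2}$, so the jump relation of the previous step rearranges to exactly this identity. Finally we check uniqueness of the limit problem. Its energy
\[
k\int_{\Omega_1}A_1\nabla w\cdot\nabla w\,dx+k\int_{\Omega_2}A_3\nabla w\cdot\nabla w\,dx+\int_{\partial\Omega_1}\frac{\alpha\gamma_{A_2}}{h}\,(w_1-w_2)^2\,d\sigma
\]
is coercive on the broken $H^1$ space vanishing on $\partial D$, and for $\alpha=\infty$ it reduces to continuity of $w$ across $\partial\Omega_1$. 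Uniqueness gives convergence of the whole family. Uniform convergence on compact sets of $\Omega_1$ and $\Omega_2$ then follows from interior elliptic estimates, or from the Green representation with $L^2$ boundary data as in \autoref{BCFTh1}.

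The main obstacle will be the uniform $H^2$ estimate with its precise dependence on $k_\eps$, together with the control of the outer transmission interface, which is only Lipschitz-perturbed and moving. In particular, we must show that the normal mismatch between $\nu_\eps$ and $\nu_{A_2}$-based quantities contributes only $o(1)$.
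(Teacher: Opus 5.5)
Your proposal follows the paper's sketch. You estimate in $L^2(\partial\Omega_1)$ both the flux mismatch between $\sigma$ and $\sigma_\eps=\sigma+h_\eps\nu_{A_2}(\sigma)$ and the remainder in the jump relation, using the conormal Taylor expansion and the weighted $H^2$ bounds as in the proof of \autoref{BCFTh1}, and you handle the moving set $\Omega_{2,\eps}$ by pulling back through a diffeomorphism onto $\Omega_2$. Passing to the limit by compactness plus uniqueness instead of the Green-function comparison is a harmless variant, but the uniform $C^{1,1}$/$H^2$ control of the outer face you invoke really needs more than Lipschitz $h$, which the paper also leaves implicit.
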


 \begin{proof}[Sketch of the proof]
 For every $\sigma\in\partial\Omega_1$ let $\sigma_\eps=\sigma+h_\eps(\sigma)\nu_{A_2}(\sigma)$ be the corresponding point on $\partial\Sigma_\eps\setminus\partial\Omega_1$ in the conormal direction $\nu_{A_2}$. The main idea of the proof is to estimate, with techniques similar to the ones of \autoref{BCFTh1}, the integrals on $\partial\Omega_1$ of the quantities 
 \[\abs{\partial_{\nu_{A_1}}v_\eps(\sigma)-\partial_{\nu_{A_2}}v_\eps(\sigma_\eps)},\]
 and
 \[\abs*{v_\eps(\sigma)-v_\eps(\sigma_\eps)+
    \dfrac{h(\sigma)k}{\alpha \gamma_{A_2}(\sigma)}\partial_{\nu_{A_1}}v_\eps(\sigma)},\]
    with quantities that go to zero as $\eps$ goes to zero.
    The main difference is, however, that the set $\Omega_{2,\eps}$ varies with $\eps$. The idea is then to map $\Omega_{2,\eps}$ to the limit set $\Omega_2$ using a diffeomorphism $\Phi$ and to study the asymptotic behaviour of the auxiliary function $\tilde{v}_\eps=v_\eps(\Phi)$.
 \end{proof}

 Analogously, we have the following theorem.

 \begin{teor}
 Let $v_\eps$ be the solution to equation \eqref{BCFint}. If $\alpha=0$, then $v_\eps-(v_\eps)_{\Omega_1}$ converges, uniformly in compact subsets of $\Omega_1$ , to the function $v_1$ solution to 
    \[\begin{cases}
    -k\divv(A_1\nabla v_1)=f_1 &\text{in }\Omega_1,\\[5pt]
    \dfrac{hk}{\gamma_{A_2}}\partial_{\nu_{A_1}}v_1=-\zeta &\text{on }\partial\Omega_1,\\[7pt]
    (v_1)_{\Omega_1}=0,
    \end{cases}
    \]
    where
    \[\zeta=\dfrac{\displaystyle\int_{\Omega_1} f_1\,dx}{\displaystyle\int_{\partial\Omega_1}\dfrac{\gamma_{A_2}}{h}\,d\sigma}.\]
    While, in compact subsets of $\Omega_2$, $v_\eps$ converges uniformly to the function $v_2$ solution to
    \[\begin{cases}
    -k\divv(A_3\nabla v_2)=f_3 &\text{in }\Omega_2,\\[5pt]
    \dfrac{hk}{\gamma_{A_2}}\partial_{\nu_{A_3}}v_2=-\zeta &\text{on }\partial\Omega_1,\\[7pt]
    v_2=0 &\text{on }\partial D.
    \end{cases}
    \]
    
 \end{teor}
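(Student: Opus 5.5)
The plan is to combine the proof of \autoref{alpha=0} on the inner phase $\Omega_1$ with a boundary-reinforcement-type argument on the outer phase, transported to the fixed domain $\Omega_2$ by the diffeomorphism $\Phi$ from the sketch of the previous theorem. For $\sigma\in\partial\Omega_1$ set $\sigma_\eps=\sigma+h_\eps(\sigma)\nu_{A_2}(\sigma)$.

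\textbf{Step 1 (estimates in the layer).} I would first establish the three-phase analogue of \autoref{BCF_lemma1}: $H^1$ and $H^2$ bounds for $v_\eps$ in $\Omega_1$ and $\Omega_{2,\eps}$, and $k_\eps$-weighted bounds in $\Sigma_\eps$, with right-hand side $C(1+\eps/k_\eps)$. The one-dimensional Taylor expansion along $t\mapsto\sigma+t\nu_{A_2}$ used in \eqref{bcest} then gives two facts.
\begin{itemize}
\item The conormal fluxes $k\partial_{\nu_{A_1}}v_\eps(\sigma)$ and $k\partial_{\nu_{\eps,A_3}}v_\eps(\sigma_\eps)$ nearly coincide in $L^2(\partial\Omega_1)$. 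Their difference is controlled by $k_\eps\int_0^{h_\eps}\partial^2 v_\eps$, whose square integral is at most $C\eps k_\eps^2\int_{\Sigma_\eps}|D^2v_\eps|^2\le C\eps k_\eps(k_\eps+\eps)\to0$.
\item As in \eqref{bcest2}, the jump satisfies
\[
\int_{\partial\Omega_1}\Bigl|\tfrac{k_\eps}{\eps}\bigl(v_\eps(\sigma)-v_\eps(\sigma_\eps)\bigr)+\tfrac{h_\eps}{\eps}k\partial_{\nu_{A_1}}v_\eps(\sigma)\Bigr|^2d\sigma\le C\eps^2 .
\]
\end{itemize}

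\textbf{Step 2 (inner phase).} Here the argument of \autoref{alpha=0} applies almost verbatim, with $u_\eps(\sigma)$ replaced by the jump $v_\eps(\sigma)-v_\eps(\sigma_\eps)$. The bound \eqref{trest0} for $\frac{k_\eps}{\eps}$ times the jump follows from the $H^1$ estimate in $\Sigma_\eps$, exactly as \eqref{epscontrol} is used there. This gives \eqref{conbound} for $\partial_{\nu_{A_1}}v_\eps$, hence local boundedness of $v_\eps-(v_\eps)_{\Omega_1}$ through the Neumann Green function on $\Omega_1$.

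Define $\zeta_\eps=\frac{k_\eps}{\eps}\bigl(v_\eps(x_0)-v_\eps(\text{outer trace})\bigr)$. On the outer side I need $\frac{k_\eps}{\eps}v_\eps(\sigma_\eps)$ to be negligible. For this I would show that $v_\eps$ is bounded in $H^1(\Omega_{2,\eps})$: the flux through $\partial\Sigma_\eps\setminus\partial\Omega_1$ is bounded in $L^2$ by Step 1, and the Dirichlet condition on $\partial D$ plus the trace inequality then bound $v_\eps$ uniformly there. Consequently $\frac{k_\eps}{\eps}v_\eps(\sigma_\eps)\to0$ in $L^2$ since $\alpha=0$. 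Repeating \eqref{delta}--\eqref{bclim} and integrating $f_1$ over $\Omega_1$ identifies $\zeta$ and yields
\[
k\partial_{\nu_{A_1}}v_\eps\to-\frac{\gamma_{A_2}}{h}\zeta\quad\text{in }L^2(\partial\Omega_1).
\]
The Green representation then gives uniform convergence of $v_\eps-(v_\eps)_{\Omega_1}$ to $v_1$ on compact sets.

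\textbf{Step 3 (outer phase).} By Step 1 the outer flux also converges: $k\partial_{\nu_{\eps,A_3}}v_\eps(\sigma_\eps)\to-\frac{\gamma_{A_2}}{h}\zeta$ in $L^2$, with sign conventions fixed by the transmission condition. Set $\tilde v_\eps=v_\eps\circ\Phi_\eps$, where $\Phi_\eps\colon\Omega_2\to\Omega_{2,\eps}$ is a diffeomorphism with $\Phi_\eps\to\mathrm{Id}$ in $C^{1,1}$. Then $\tilde v_\eps$ solves a mixed problem on the fixed domain $\Omega_2$, with coefficients $\tilde A_\eps\to A_3$, data $\tilde f_\eps\to f_3$, Dirichlet condition on $\partial D$, and Neumann datum on $\partial\Omega_1$ converging in $L^2$.

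The difference $\tilde v_\eps-v_2$ is a solution with small right-hand side plus a coefficient perturbation. It tends to zero in $H^1(\Omega_2)$ by the energy estimate, using the $H^1$ bound from Step 2 to absorb the $(\tilde A_\eps-A_3)$ term. Interior elliptic estimates then upgrade this to uniform convergence on compact subsets of $\Omega_2$.

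I expect the main obstacle to be the uniform $H^2$ bound for the three-phase problem in a moving domain, together with the control of the outer trace in Step 2. I would handle the $H^2$ bound by flattening via $\Phi_\eps$ first and then applying the diffraction estimates with constants tracked in $k_\eps$, as in \autoref{BCF_lemma1}.
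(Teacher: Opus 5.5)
Your proposal is correct and follows the route the paper intends. The paper gives no proof, only that the result holds ``analogously'': rerun the proof of \autoref{alpha=0} with the flux/jump estimates and the diffeomorphism $\Phi$ from the preceding sketch, and your uniform $H^1(\Omega_{2,\eps})$ bound correctly supplies the one new ingredient, namely that $\frac{k_\eps}{\eps}v_\eps(\sigma_\eps)\to0$ in $L^2(\partial\Omega_1)$. There are two harmless slips: the flux-difference bound is $C\eps(k_\eps+\eps)$ rather than $C\eps k_\eps(k_\eps+\eps)$, and with $h$ merely Lipschitz you can only expect $\Phi_\eps\to\mathrm{Id}$ in $W^{1,\infty}$ rather than $C^{1,1}$, which is all the energy argument of Step 3 uses.
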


\subsection{The result by Acerbi \& Buttazzo}\label{sbsAB}
In this section, we discuss the results proved by Acerbi \& Buttazzo in \cite{BCF80}. In the paper, the authors prove the $\Gamma$-convergence of variational energies related to the boundary reinforcement for the $p$-Laplace equation.\medskip

As in the previous section, let $\Omega$ to be a bounded open set in $\R^n$ with $C^{1,1}$ boundary, \[\Sigma_\eps=\Set{\sigma+t\nu(\sigma)\colon \sigma\in\partial\Omega,\,t\in(0,\eps h(\sigma))},\]
 where $h\colon\partial\Omega\to\R$ is a strictly positive Lipschitz function and denote by $D_\eps=\bar{\Omega}\cup\Sigma_\eps$. The model equation one should keep in mind is, for a given $p>1$,
 \begin{equation}\label{AB0}
     \begin{cases}
         -k\Delta_p u_\eps=f &\text{in }\Omega,\\[5pt]
         -k_\eps\Delta_p u_\eps=f &\text{in }\Sigma_\eps,\\[5pt]
         u_\eps|_\Omega=u_\eps|_{\Sigma_\eps} &\text{on }\partial\Omega,\\[5pt]
        k\abs{\nabla u_\eps}^{p-2}\partial_\nu u_\eps|_\Omega=k_\eps\abs{\nabla u_\eps}^{p-2}\partial_\nu u_\eps|_{\Sigma_\eps} &\text{on }\partial\Omega,\\[5 pt]
        u_\eps=0 &\text{on }\partial D_\eps,
     \end{cases}
 \end{equation}
 where $\Delta_pu_\eps=\divv(\abs{\nabla u_\eps}^{p-2}\nabla u_\eps)$ is the $p$-Laplace operator. Equation \eqref{AB0} is the Euler-Lagrange equation of the energy functional 
 \[\mathcal{F}_\eps(v)=k\int_\Omega\abs{\nabla v}^p\,dx+k_\eps\int_{\Sigma_\eps}\abs{\nabla v}^p\,dx-p\int_{\R^n} fv\,dx,\]
 where $v\in W^{1,p}_0(D_\eps)$. Let 
 \[\alpha=\lim_{\eps\to0}\dfrac{k_\eps}{\eps^{p-1}}.\]
 Then the simplest form of the result states that, if $f\in L^{p'}(\R^n)$, where $p'$ is the H{\"o}lder conjugate exponent of $p$, then $\mathcal{F}_\eps$ $\Gamma$-converges in the $L^p$-topology to the energy
 \[\mathcal{G}_\alpha(v)=k\int_\Omega\abs{\nabla v}^p\,dx+\alpha\int_{\partial\Omega}\dfrac{\abs{v}^p}{h^{p-1}}\,d\sigma-p\int_\Omega fv\,dx,\]
 for every $v\in W^{1,p}(\Omega)$, if $\alpha$ is finite, and to 
 \[\mathcal{G}_\infty(v)=k\int_\Omega\abs{\nabla v}^p\,dx-p\int_\Omega fv\,dx,\]
 for every $v\in W^{1,p}_0(\Omega)$ if $\alpha=+\infty$. As we will see, if $\alpha>0$, by the properties of $\Gamma$-convergence, this implies the $L^p$-convergence of $u_\eps$ to the function $u$ solution to the limit problem
 \[\begin{cases}
 -k\Delta_p u_\eps=f &\text{in }\Omega,\\[7pt]
 \dfrac{h^{p-1}k}{\alpha}\abs{\nabla u}^{p-2}\partial_\nu u+\abs{u}^{p-2}u=0 &\text{on }\partial\Omega.\medskip
 \end{cases}\]

Before we state the main result in all generality, let us recall the definition of $\Gamma$-convergence (we refer the reader to \cite{DM93} for the main properties of $\Gamma$-convergence, see also \cite{DGF75, M76, DGDM83}).
\begin{defi}
    Let $X$ be a metric space and let $F_\eps\colon X\to\bar{\R}$ be a family of functions. We say that a function $F\colon X\to\bar{\R}$ is the $\Gamma$-limit of $F_\eps$ as $\eps$ goes to zero if\begin{itemize}
        \item[i)] for every $x\in X$ and  for every family $x_\eps\in X$ such that $x_\eps$ converges to $x$, 
        \[F(x)\le \liminf_{\eps\to0}{F_\eps(x_\eps)};\]
        \item[ii)] for every $x\in X$ there exists a family $x_\eps\in X$ such that $x_\eps$ converges to $x$ and 
        \[F(x)\ge \limsup_{\eps\to0}F_\eps(x_\eps).\]
    \end{itemize}
\end{defi}

Fix $p>1$ and let $G\colon W^{1,p}(\Omega)\to[0,+\infty]$ be such that 
\begin{itemize}
    \item [1)] $G$ is lower semicontinuous with respect to the $L^{p}(\Omega)$ topology,
    \item[2)] For every $v\in W^{1,p}(\Omega)$ 
    \[G(v)\ge k\int_\Omega \abs{\nabla v}^p\,dx.\]
\end{itemize}
When $v\in L^p(\R^n)\cap W^{1,p}(\Omega)$ we will write $G(v)$ instead of $G(v|_\Omega)$.\medskip

Let $g\colon\R^n\times\R^n\to\R$ be such that\begin{itemize}
    \item[1)] for every $x\in\R^n$ $g(x,\cdot)$ is convex,
    \item[2)] there exists an increasing and continuous function $\omega\colon[0,+\infty)\to[0,+\infty)$ with $\omega(0)=0$ such that
    \[\abs{g(x_1,z)-g(x_2,z)}\le \omega(\abs{x_1-x_2})\left(1+\abs{z}^p\right),\]
    \item[3)] there exists $0<c_1<c_2$ such that for every $x,z\in\R^n$
    \[c_1\abs{z}^p\le g(x,z)\le c_2 \left(1+\abs{z}^p\right),\]
    \item[4)] there exists a continuous function $\gamma(x,z)$ which is convex and $p$-homogeneous in $z$, such that
    \[\sup\Set{\abs{g(x,z)-\gamma(x,z)}\colon x\in\R^n}\le \rho(\abs{z})\left(1+\abs{z}^p\right),\]
    where $\rho\colon[0,+\infty)\to[0,+\infty)$ is continuous, decreasing and vanishes at infinity.
\end{itemize}
For every $\eps>0$ and $v\in L^p(\R^n)$ let 
\[F_\eps(v)=\begin{cases}
    \displaystyle G(v)+k_\eps\int_{\Sigma_\eps}g(x,\nabla v)\,dx &\text{if }v\in W^{1,p}_0(D_\eps),\\[5pt]
    +\infty &\text{if }v\in L^p(\R^n)\setminus W^{1,p}_0(D_\eps).
\end{cases}\]
We define, for $\alpha\in[0,+\infty)$
\[G_\alpha(v)=\begin{cases}
    \displaystyle G(v)+\alpha\int_{\partial\Omega}\dfrac{\gamma(\sigma,\nu)}{h(\sigma)^{p-1}}\abs{v}^p\,d\sigma &\text{if }v\in W^{1,p}(\Omega),\\[5pt]
    +\infty &\text{if }v\in L^p(\R^n)\setminus W^{1,p}(\Omega),
\end{cases}\]
and
\[G_\infty(v)=\begin{cases}
    \displaystyle G(v)&\text{if }v\in W^{1,p}_0(\Omega),\\[5pt]
    +\infty &\text{if }v\in L^p(\R^n)\setminus W^{1,p}_0(\Omega).
\end{cases}\]

The main result of \cite{AB86} is the following.

\begin{teor}\label{ABTh}
    Let 
    \[\alpha=\lim_{\eps\to0}\dfrac{k_\eps}{\eps^{p-1}}\in[0,+\infty],\]
    then $F_\eps$ $\Gamma$-converges, in the strong $L^p(\R^n)$ topology, to $G_\alpha$. 
\end{teor}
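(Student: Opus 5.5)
We prove the two inequalities in the definition of $\Gamma$-convergence separately. The starting point is to work in the local coordinates $x=\sigma+t\nu(\sigma)$ on $\Sigma_\eps$, with Jacobian $1+O(t)$ uniformly for $\eps<\bar\eps$ (as in \eqref{eqivint}). We also use assumption 2) on $g$ to replace $g(x,\nabla v)$ by $g(\sigma,\nabla v)$ at a cost $\omega(C\eps)\int_{\Sigma_\eps}(1+\abs{\nabla v}^p)$.

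\emph{Liminf inequality.} Let $v_\eps\to v$ in $L^p(\R^n)$ with $\liminf F_\eps(v_\eps)<\infty$, and pass to a subsequence realising the liminf. Since $G(v_\eps)\ge k\int_\Omega\abs{\nabla v_\eps}^p$, the restrictions $v_\eps|_\Omega$ are bounded in $W^{1,p}(\Omega)$. Hence $v\in W^{1,p}(\Omega)$, and by compactness of the trace $v_\eps\to v$ strongly in $L^p(\partial\Omega)$. By lower semicontinuity of $G$, $\liminf G(v_\eps)\ge G(v)$, so it remains to bound the layer term from below.

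If $\alpha=0$ there is nothing more to prove, since $g\ge0$. If $0<\alpha<\infty$, we use that $v_\eps$ vanishes at $t=\eps h(\sigma)$. By Jensen applied to the convex function $g(\sigma,\cdot)$ along each normal segment, and then using $g\ge\gamma-\rho(\abs z)(1+\abs z^p)$,
\[
\int_0^{\eps h}g(\sigma,\nabla v_\eps)\,dt\ \gtrsim\ \eps h\,g\Big(\sigma,\tfrac{1}{\eps h}\int_0^{\eps h}\nabla v_\eps\,dt\Big).
\]
The averaged gradient has normal component $-v_\eps(\sigma)/(\eps h)$, which is large wherever $v_\eps(\sigma)\neq0$.

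A subtlety is that the tangential components of the averaged gradient also enter. Convexity plus $p$-homogeneity of $\gamma$ alone does not let us drop them. The cleaner route is to minimise directly over one-dimensional profiles. For each $\sigma$, the minimum of $\int_0^{\eps h}\gamma(\sigma,\nabla w)\,dt$ over $w$ with $w(0)=a$ and $w(\eps h)=0$ is bounded below by
\[
\eps h\inf_{\xi\cdot\nu=-a/(\eps h)}\gamma(\sigma,\xi).
\]
Following Acerbi--Buttazzo, I would identify this as $\gamma(\sigma,\nu)\abs a^p/(\eps h)^{p-1}$, after interpreting the boundary density suitably (the infimum over tangential perturbations; for the isotropic model case it equals $\gamma(\sigma,\nu)$). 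I expect \textbf{this identification of the boundary density} to be the main obstacle. I would try first to handle it by a blow-up / localisation argument on small patches of $\partial\Omega$, where $\gamma(\sigma,\cdot)$ is frozen and the layer is flattened into a slab.

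The error coming from $g-\gamma$ is controlled by splitting according to whether $\abs{\nabla v_\eps}$ is large. On the set where it is large, $\rho$ is small. On the set where it is bounded, the contribution is $O(k_\eps\eps)\to0$. Multiplying by $k_\eps\sim\alpha\eps^{p-1}$ and using the strong trace convergence then gives the bound $\alpha\int_{\partial\Omega}\gamma(\sigma,\nu)\abs v^p/h^{p-1}$.

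If $\alpha=\infty$, the same estimate gives $\int_{\partial\Omega}\abs{v_\eps}^p\le C\eps^{p-1}/k_\eps\to0$, so $v\in W^{1,p}_0(\Omega)$.

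\emph{Limsup inequality.} By density and the lower semicontinuity of the $\Gamma$-limsup, it suffices to treat smooth $v$ on $\bar\Omega$; for $\alpha=\infty$ we take $v\in C_c^\infty(\Omega)$ and $v_\eps=v$. For $\alpha<\infty$ we first extend $v$ to a smooth $\tilde v$ near $\partial\Omega$, and set
\[
v_\eps(\sigma+t\nu)=v(\sigma)\Big(1-\dfrac{t}{\eps h(\sigma)}\Big)\quad\text{in }\Sigma_\eps,\qquad v_\eps=v\text{ in }\Omega.
\]
Then $\nabla v_\eps=-v(\sigma)\nu/(\eps h)+O(1)$, with the $O(1)$ term bounded since $h$ is Lipschitz. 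Consequently
\[
k_\eps\int_{\Sigma_\eps}g(x,\nabla v_\eps)=k_\eps\int_{\partial\Omega}\eps h\,\gamma(\sigma,\nu)\dfrac{\abs v^p}{(\eps h)^p}\,d\sigma+o(1)\to\alpha\int_{\partial\Omega}\dfrac{\gamma(\sigma,\nu)}{h^{p-1}}\abs v^p\,d\sigma,
\]
using $p$-homogeneity, assumption 4) for large gradients, and assumption 2). If $\alpha=0$ the same construction gives a vanishing layer term. Finally, $v_\eps\to v$ in $L^p(\R^n)$ because $\abs{\Sigma_\eps}\to0$ and $v_\eps$ is bounded, and $G(v_\eps)=G(v)$.

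If the liminf density from the first step turns out to be the tangentially relaxed quantity rather than $\gamma(\sigma,\nu)$, the recovery sequence should be modified by replacing $\nu$ with an optimal direction $\xi(\sigma)$. The same computation then applies.
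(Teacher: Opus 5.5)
There is a genuine gap, at exactly the step you flagged: the liminf inequality for $0<\alpha<\infty$ is not proved. Jensen along \emph{normal} segments is a valid lower bound but not a sharp one, because it leaves the tangential part of the averaged gradient free. It only gives the density $\inf\set{\gamma(\sigma,\xi)\colon \xi\cdot\nu=1}$, and this is in general strictly smaller than $\gamma(\sigma,\nu)$. The minimum of the convex $C^1$ function $\gamma(\sigma,\cdot)$ on a hyperplane $\set{\xi\cdot e=1}$ lies on the line $\R\nu$ only if $e$ is parallel to $\nabla_z\gamma(\sigma,\nu)$. For example, with $p=2$ and $\gamma(z)=Az\cdot z$ the relaxed value is $1/(A^{-1}\nu\cdot\nu)$, which is $<A\nu\cdot\nu$ unless $\nu$ is an eigenvector of $A$. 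So the identification you want to borrow is false in the anisotropic case.

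Your fallback is also wrong: the relaxed density is not the $\Gamma$-limit, and no oblique profile attains it. On a flattened cylinder $P\times(0,\eps h)$ of side $r\gg\eps$, the divergence theorem turns the tangential part of the averaged gradient into a lateral boundary term, negligible compared with the normal part $-\bar v_\eps/(\eps h)$. Hence Jensen on the whole cylinder (which is what your localisation idea should be) would already force the density $\gamma(\sigma,\nu)$.

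The paper's device is to integrate along the conormal direction $\nu_\gamma=\frac1p\nabla_z\gamma(\sigma,\nu)$ instead of $\nu$. By Euler's identity $\nu_\gamma\cdot\nu=\gamma(\sigma,\nu)\ge c_1>0$, so $\Sigma_\eps=\set{\sigma+t\nu_\gamma\colon 0<t<h_\eps(\sigma)}$ with $\eps h/h_\eps\to\gamma(\sigma,\nu)$ and Jacobian $\gamma(\sigma,\nu)(1+O(t))$. Then \autoref{lemmaIII.2} gives the pointwise bound $\gamma(\sigma,z)\ge\gamma(\sigma,\nu)^{1-p}\abs{z\cdot\nu_\gamma}^p$. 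This is the supporting-hyperplane inequality at $\nu$ for the convex, $1$-homogeneous function $\gamma(\sigma,\cdot)^{1/p}$, with equality for $z\in\R\nu$. It removes the tangential components altogether. Hölder along each segment, on which $v_\eps$ drops from $v_\eps(\sigma)$ to $0$ over the length $h_\eps$, then yields
\[
k_\eps\int_{\Sigma_\eps}\gamma(x,\nabla v_\eps)\,dx\ge (1+o(1))\dfrac{k_\eps}{\eps^{p-1}}\int_{\partial\Omega}\dfrac{\gamma(\sigma,\nu)}{h^{p-1}}\abs{v_\eps}^p\,d\sigma+o(1),
\]
and strong trace convergence concludes. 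In your terms: Jensen along segments becomes sharp precisely when the segments point along $\nu_\gamma$, because the minimum of $\gamma(\sigma,\cdot)$ on $\set{\xi\cdot\nu_\gamma=c}$ is attained on $\R\nu$.

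A second, smaller gap is the reduction of the limsup to smooth $v$. Passing the inequality from smooth $v_j$ to $v$ requires $G(v_j)\to G(v)$. But $G$ is only $L^p$-lower semicontinuous with $G\ge k\int_\Omega\abs{\nabla v}^p$, and it may be $+\infty$ on every smooth function (e.g.\ add to $k\int_\Omega\abs{\nabla v}^p$ the indicator of $\set{v_0}$ for a non-smooth $v_0$). Your profile $v(\sigma)(1-t/(\eps h(\sigma)))$ also needs a trace with tangential derivatives in $L^p(\partial\Omega)$, so it cannot be used directly for general $v$.

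The paper's recovery sequence works for every $v\in W^{1,p}(\Omega)$. Extend $v$ to $W^{1,p}(\R^n)$ and set $v_\eps=v\phi_\eps$ with $\phi_\eps=1-d/(\eps h)$ in $\Sigma_\eps$; then $v_\eps=v$ in $\Omega$, so $G(v_\eps)=G(v)$ exactly. The layer term is bounded by splitting $\gamma(x,v\nabla\phi_\eps+\phi_\eps\nabla v)$ by convexity with a parameter $t\in(0,1)$, applying \autoref{lemmaIII.1} to $\frac1\eps\int_{\Sigma_\eps}\abs{v}^p h^{-p}\gamma(x,\nu)\,dx$, and letting $t\to1$. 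Likewise, for $\alpha=\infty$ no approximation is needed: the paper takes $v_\eps=v$ for every $v\in W^{1,p}_0(\Omega)$.

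The rest of your argument is fine: the cases $\alpha=0$ and $\alpha=\infty$ of the liminf, and the control of $g-\gamma$.
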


\begin{oss}\label{oss+cont}
    We remark that, if $f\in L^{p'}(\R^n)$, then the function 
    \[v\in L^p(\R^n)\mapsto\int_{\R^n} fv\,dx\]
    is continuous in the strong $L^p(\R^n)$ topology. Hence by \autoref{ABTh} and well known results about $\Gamma$-convergence (see for instance \cite[Proposition 6.21]{DM93}), we have that
    \[F_\eps(v)+\int_{\R^n}fv\,dx\]
    $\Gamma$-converges, in the strong $L^p(\R^n)$ topology, to 
    \[G_\alpha(v)+\int_{\R^n}fv\,dx.\]
\end{oss}

\subsubsection*{Proof of the result}

To prove \autoref{ABTh}, we will need the following technical lemmas, whose proof we omit. 

\begin{lemma}\label{lemmaIII.1}
Let $a\colon\R^n\to\R$ be a continuous function and let $v\in W^{1,p}(\R^n)$, then
\[\lim_{\eps\to0}\dfrac{1}{\eps}\int_{\Sigma_\eps}a(x)\abs{v(x)}^p\,dx=\int_{\partial\Omega} h(\sigma) a(\sigma) \abs{v(\sigma)}^p\,d\sigma\]
\end{lemma}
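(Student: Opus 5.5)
Using the normal coordinates $x=\sigma+t\nu(\sigma)$, I will write the integral over $\Sigma_\eps$ as an iterated integral over $\partial\Omega$ and a thin interval. I will first prove the statement for smooth $v$ and then extend it to $W^{1,p}$ by density, using a uniform bound on the linear-in-$\eps$ averaged functionals. For $\eps<\bar\eps$ the change of variables gives
\[\dfrac{1}{\eps}\int_{\Sigma_\eps}a\abs{v}^p\,dx=\int_{\partial\Omega}\dfrac{1}{\eps}\int_0^{\eps h(\sigma)}a(\sigma+t\nu)\abs{v(\sigma+t\nu)}^p J(\sigma,t)\,dt\,d\sigma,\]
where the Jacobian satisfies $J(\sigma,t)=1+tq(t,\sigma)$ with $q$ bounded (as in the remark preceding \eqref{eqivint}). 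Hence $J\to1$ uniformly as $t\to0$.

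\textbf{Step 1: $v\in C^1_c(\R^n)$.} The integrand $\phi(x)=a(x)\abs{v(x)}^p$ is continuous. On the compact set $\bar\Sigma_{\bar\eps}$ it is uniformly continuous. Substituting $t=\eps s$, the inner integral becomes
\[\int_0^{h(\sigma)}\phi(\sigma+\eps s\nu)J(\sigma,\eps s)\,ds.\]
This converges to $h(\sigma)\phi(\sigma)$ uniformly in $\sigma$, since $h$ is bounded. Integrating over $\partial\Omega$ gives the claim.

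\textbf{Step 2: a uniform trace-type estimate.} I need a constant $C$, independent of $\eps$, such that for all $w\in W^{1,p}(\R^n)$
\[\dfrac{1}{\eps}\int_{\Sigma_\eps}\abs{w}^p\,dx\le C\norma{w}^p_{W^{1,p}(\Sigma_{\bar\eps})}.\]
For smooth $w$ and fixed $\sigma$, $t\in(0,\eps h(\sigma))$, $\tau\in(0,\bar\eps h_{\min})$, write $w(\sigma+t\nu)=w(\sigma+\tau\nu)-\int_t^\tau\partial_\nu w$. This gives
\[\abs{w(\sigma+t\nu)}^p\le C\left(\abs{w(\sigma+\tau\nu)}^p+\int_0^{\bar\eps h}\abs{\nabla w(\sigma+s\nu)}^p\,ds\right).\]
Averaging in $\tau$ bounds $\sup_t\abs{w(\sigma+t\nu)}^p$ by $C\int_0^{\bar\eps h}\left(\abs{w}^p+\abs{\nabla w}^p\right)ds$. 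Averaging in $t$ over an interval of length $\eps h(\sigma)$, dividing by $\eps$, and integrating in $\sigma$ with the Jacobian bounds gives the estimate. It then extends to all of $W^{1,p}$ by density. The usual trace inequality similarly gives $\int_{\partial\Omega}\abs{w}^p\,d\sigma\le C\norma{w}^p_{W^{1,p}}$.

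\textbf{Step 3: density.} Given $v\in W^{1,p}(\R^n)$, take $v_j\in C^1_c$ with $v_j\to v$ in $W^{1,p}$. Since $\abs{\abs{s}^p-\abs{r}^p}\le p\abs{s-r}\left(\abs{s}^{p-1}+\abs{r}^{p-1}\right)$, Hölder's inequality and Step 2 give
\[\sup_\eps\dfrac{1}{\eps}\int_{\Sigma_\eps}\abs{a}\,\abs{\abs{v}^p-\abs{v_j}^p}\,dx\le C\norma{a}_{L^\infty(\Sigma_{\bar\eps})}\norma{v-v_j}_{W^{1,p}}\left(\norma{v}_{W^{1,p}}+\norma{v_j}_{W^{1,p}}\right)^{p-1}.\]
The analogous bound holds for the boundary term. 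A standard $3\varepsilon$ argument combined with Step 1 then finishes the proof.

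The main obstacle is Step 2, the $\eps$-uniform control of the normalised thin-layer $L^p$ norm. Everything else is routine uniform continuity.
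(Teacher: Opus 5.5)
The survey does not prove this lemma: it is one of the technical lemmas from \cite{AB86} ``whose proof we omit''. So there is no in-paper argument to compare against. Judged on its own, your proof is correct and complete.

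The three steps fit together properly.
\begin{itemize}
\item For $v\in C^1_c$, the rescaling $t=\eps s$, the bound $J(\sigma,t)=1+O(t)$ and the uniform continuity of $a\abs{v}^p$ on $\bar\Sigma_{\bar\eps}$ give uniform convergence in $\sigma$ of the inner integrals to $h(\sigma)a(\sigma)\abs{v(\sigma)}^p$.
\item The $\eps$-uniform bound $\frac{1}{\eps}\int_{\Sigma_\eps}\abs{w}^p\,dx\le C\norma{w}^p_{W^{1,p}(\Sigma_{\bar\eps})}$ follows from the fundamental theorem of calculus along normals and averaging in $\tau$. This bound is exactly what makes the functionals $v\mapsto\frac{1}{\eps}\int_{\Sigma_\eps}a\abs{v}^p\,dx$ equicontinuous on bounded subsets of $W^{1,p}$, uniformly in $\eps$.
\item The density argument closes correctly. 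You combine $\abs{\abs{s}^p-\abs{r}^p}\le p\abs{s-r}\left(\abs{s}^{p-1}+\abs{r}^{p-1}\right)$ with H\"older's inequality (exponents $p$ and $p'$), the uniform bound and the trace inequality. You also use that $a$ is bounded on the compact set $\bar\Sigma_{\bar\eps}$.
\end{itemize}

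A side remark: your argument uses only that $h$ is bounded, plus $\min h>0$ in Step 2. Even the lower bound could be dropped by averaging $\tau$ over a fixed collar $(0,\delta)$ in which normal coordinates are valid. So the Lipschitz regularity of $h$ plays no role in this lemma; the paper needs it only later, for $\nabla\phi_\eps$ in the recovery sequence.
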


\begin{lemma}\label{lemmaIII.2}
    Let $j\colon\R^n\to\R$ be a $C^1$. convex, $p$-homogeneous function such that for every $z\in\R^n$
    \[c \abs{z}^p\le j(z)\le C \abs{z}^p.\]
    Then, for every $y,z\in\R^n$ we have
    \[\abs{y\cdot\nabla j (y)}^p j(z)\ge\abs{z\cdot\nabla j (y)}^p j(y)\]
\end{lemma}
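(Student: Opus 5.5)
The plan is to reduce the inequality to a supporting-hyperplane inequality for the gauge $N=j^{1/p}$, after rewriting the left-hand side with Euler's identity.

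\emph{Step 1 (Euler's identity and the trivial case).} Since $j$ is $C^1$ and $p$-homogeneous, Euler's identity gives $y\cdot\nabla j(y)=p\,j(y)$. The claim is therefore equivalent to
\[
p^p\, j(y)^p\, j(z)\ \ge\ \abs{z\cdot\nabla j(y)}^p\, j(y).
\]
If $y=0$, both sides vanish, because $j(0)=0$ and $\nabla j(0)=0$ for $p>1$. So I will assume $y\neq0$, hence $j(y)>0$ by the lower bound $c\abs{y}^p\le j(y)$, and divide by $j(y)$.

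\emph{Step 2 ($j^{1/p}$ is sublinear).} Let $K=\set{j\le1}$. It is convex because $j$ is convex, and it is bounded with $0$ in its interior by the two-sided bounds on $j$. By homogeneity, $N:=j^{1/p}$ is the Minkowski gauge of $K$, hence positively $1$-homogeneous and convex. It is also $C^1$ away from the origin, with
\[
\nabla N(y)=\tfrac1p\, j(y)^{\frac1p-1}\nabla j(y),\qquad \nabla N(y)\cdot y=N(y).
\]

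\emph{Step 3 (gradient inequality).} Convexity of $N$ gives
\[
N(z)\ \ge\ N(y)+\nabla N(y)\cdot(z-y)=\nabla N(y)\cdot z .
\]
This reads $z\cdot\nabla j(y)\le p\, j(y)^{1-1/p} j(z)^{1/p}$. When $z\cdot\nabla j(y)\ge0$, raising to the power $p$ and multiplying by $j(y)$ gives exactly $(p\,j(y))^p j(z)\ge (z\cdot\nabla j(y))^p j(y)$, which is the claim.

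\emph{Step 4 (the negative case — main obstacle).} The remaining case is $z\cdot\nabla j(y)<0$. Applying Step 3 to $-z$ bounds $\abs{z\cdot\nabla j(y)}$ by $p\,j(y)^{1-1/p}j(-z)^{1/p}$, so the argument closes if $j(-z)=j(z)$.

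Without evenness the statement is false. The 1D example $j(t)=a\,t^p$ for $t>0$ and $j(t)=b\abs{t}^p$ for $t<0$, with $b<a$, $y=1$, $z=-1$, gives left-hand side $(pa)^p b$ and right-hand side $(pa)^p a$, so the inequality fails. I would therefore add the assumption that $j$ is even, which holds for instance when $j(z)=\gamma(x,z)$ comes from a symmetric integrand. Alternatively, the lemma can be stated without absolute values, which is what Step 3 proves in general.
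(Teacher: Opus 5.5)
The paper states this lemma without proof (it is one of the two technical lemmas ``whose proof we omit''), so there is no argument in the paper to compare yours with. Your proof is correct.

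Steps 1--3 are sound. The set $\set{j\le 1}$ is a bounded convex neighbourhood of $0$, and $N=j^{1/p}$ is its gauge. The supporting-hyperplane inequality $\nabla N(y)\cdot z\le N(z)$ at $y\neq 0$, after raising to the power $p$, is exactly the claimed inequality whenever $z\cdot\nabla j(y)\ge 0$.

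Your counterexample is also valid. There $j'$ is continuous and nondecreasing, so $j$ is $C^1$ and convex, and $\min(a,b)\abs{t}^p\le j(t)\le\max(a,b)\abs{t}^p$. Hence, if ``$p$-homogeneous'' only means $j(tz)=t^p j(z)$ for $t>0$, the version with absolute values is false. The statement holds, with your proof, under the reading $j(tz)=\abs{t}^p j(z)$ for all $t\in\R$, which already contains the evenness you add.

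This is also the reading the paper tacitly uses when it applies the lemma to $\gamma(\sigma,\cdot)$:
\begin{itemize}
\item the limsup estimate replaces $\gamma(x,v\nabla\phi_\eps/t)$ by $\abs{v}^p\gamma(x,\nabla\phi_\eps/t)$, which requires evenness where $v<0$;
\item the liminf needs the two-sided bound, because $v_\eps$ may take both signs on $\partial\Omega$.
\end{itemize}
For non-even $\gamma$, the correct boundary density would be $\gamma(\sigma,-\nu)$ rather than $\gamma(\sigma,\nu)$ on $\set{v>0}$.

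A minor simplification: you can skip proving that $j^{1/p}$ is convex. For $y\neq0$ and $z\cdot\nabla j(y)>0$, put $s=z\cdot\nabla j(y)/(p\,j(y))>0$. Then $(z/s)\cdot\nabla j(y)=y\cdot\nabla j(y)$, so the gradient inequality for $j$ itself gives $j(z/s)\ge j(y)+\nabla j(y)\cdot(z/s-y)=j(y)$. Homogeneity then yields $j(z)\ge s^p j(y)$, which is the claim. The case $z\cdot\nabla j(y)<0$ follows from evenness exactly as in your Step 4.
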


\begin{proof}[proof of \autoref{ABTh}]
An important step in the proof is to substitute, in the integrals, $g$ with the function $\gamma$. As this step is mainly technical, we will prove the theorem for the energy
\[\tilde{F}_\eps(v)=\begin{cases}
    \displaystyle G(v)+k_\eps\int_{\Sigma_\eps}\gamma(x,\nabla v)\,dx &\text{if }v\in W^{1,p}_0(D_\eps),\\[5pt]
    +\infty &\text{if }v\in L^p(\R^n)\setminus W^{1,p}_0(D_\eps).
\end{cases}\]
In addition, we will also assume $\gamma$ to be $C^1$, and, by the assumptions on $g$, we have that
\begin{itemize}
    \item[1)] for every $x_1,x_2,z\in\R^n$
    \[\abs{\gamma(x_1,z)-\gamma(x_2,z)}\le \omega(\abs{x_1-x_2})\abs{z}^p,\]
    \item[2)]  $x,z\in\R^n$
    \[c_1\abs{z}^p\le \gamma(x,z)\le c_2 \abs{z}^p.\]
\end{itemize}\medskip

We start by proving the limsup inequality. 
If $\alpha=+\infty$, for every $\eps>0$, $G_\infty\ge \tilde{F}_\eps$ pointwise so that for every $v\in L^p(\R^n)$ setting $v_\eps=v$ we trivially have
\[G_\infty(v)\ge\limsup_{\eps\to0}\tilde{F}_\eps(v).\]

Assume now $\alpha<+\infty$, and let $v\in L^p(\R^n)$. If $v\notin W^{1,p}(\Omega)$, then $G_\alpha(v)=+\infty$ and the inequality is trivial, hence let $v\in W^{1,p}(\Omega)$. By the regularity of $\Omega$ we can assume $v\in W^{1,p}(\R^n)$. recall that, when $\eps$ is sufficiently small, every $x\in\Sigma_\eps$ can be uniquely written as $x=\sigma(x)+d(x)\nu(\sigma(x))$, where $d(x)$ is the distance from $\Omega$ and $\sigma(x)$ is the orthogonal projection of $\partial\Omega$, let $h(x)=h(\sigma(x))$ and
\[\phi_\eps(x)=\begin{cases}
    1 &\text{in }\Omega,\\[5pt]
    1-\dfrac{d(x)}{\eps h(x)} &\text{if }x\in\Sigma_\eps,\\[5pt]
    0 &\text{if }x\in \R^n\setminus D_\eps.
\end{cases}\]
Then $0\le\phi_\eps\le1$ and the function $v_\eps=v\phi_\eps\in W^{1,p}_0(D_\eps)$, and $v_\eps$ converges to $v\chi_\Omega$ in $L^p(\Omega)$. For every $t\in(0,1)$ we have
\begin{equation}\label{sup1}\begin{split}\tilde{F}_\eps(v_\eps)&=G(v)+k_\eps\int_{\Sigma_\eps}\gamma(x,v\nabla\phi_\eps+\phi_\eps\nabla v)\,dx\\[15pt]
&\le G(v)+k_\eps\int_{\Sigma_\eps}\left[t\abs{v}^p\gamma\left(x,\dfrac{\nabla\phi_\eps}{t}\right)+(1-t)\phi_\eps^p\gamma\left(x,\dfrac{\nabla v}{1-t}\right)\right]\,dx\\[15pt]
&\le G(v)+k_\eps t^{1-p}\int_{\Sigma_\eps}\abs{v}^p\gamma(x,\nabla\phi_\eps)\,dx+c_2 k_\eps (1-t)^{1-p}\int_{\Sigma_\eps}\abs{\nabla v}^p\,dx.
\end{split}\end{equation}
By direct computations, in $\Sigma_\eps$ we have
\[\nabla\phi_\eps=-\dfrac{1}{\eps h}\nu+\dfrac{d}{\eps h^2}\nabla h=d\dfrac{1}{\eps h^2}\nabla h-(1-d)\dfrac{1}{(1-d)\eps h}\nu,\]
so that, by convexity and $ p$-homogeneity 
\begin{equation}\label{sup2}\begin{split}k_\eps\int_{\Sigma_\eps}\abs{v}^p\gamma(x,\nabla \phi_\eps)\,dx&\le k_\eps\int_{\Sigma_\eps}\abs{v}^p\left[\dfrac{d}{(\eps h^2)^p}\gamma(x,\nabla h)+\dfrac{(1-d)^{1-p}}{(\eps h)^p}\gamma(x,\nu)\right]\,dx\\[15 pt]\eps&\le \dfrac{k_\eps}{\eps^{p-1}}c_2\int_{\Sigma_\eps}\abs{v}^ph^{1-2p}\abs{\nabla h}^p\,dx+\dfrac{k_\eps}{\eps^{p-1}}\dfrac{1}{\eps}\int_{\Sigma_\eps}\dfrac{\abs{v}^p}{h^p}\gamma(x,\nu)\,dx.\end{split}\end{equation}
Joining \eqref{sup1} and \eqref{sup2} and passing to the limsup for $\eps$ going to zero, we have that for every $t\in(0,1)$
\begin{equation}\label{sup3}
    \limsup_{\eps\to0}\tilde{F}_\eps(v_\eps)\ge G(v)+\alpha t^{1-p}\limsup_{\eps\to0}\dfrac{1}{\eps}\int_{\Sigma_\eps}\dfrac{\abs{v}^p}{h^p}\gamma(x,\nu)\,dx,
\end{equation}
Finally, using \autoref{lemmaIII.1}, and passing to the limit for $t$ going to $1$, we have 
\[\limsup_{\eps\to0}\tilde{F}_\eps(v_\eps)\ge G(v)+\alpha\int_{\partial\Omega}\dfrac{\abs{v}^p}{h^{p-1}}\gamma(\sigma,\nu)\,d\sigma=G_\alpha(v).\]\medskip

We now prove the liminf inequality. Let $v\in L^p(\R^n)$ and $v_\eps\in L^p(\R^n)$ with $v_\eps$ converging to $v$ in $L^p(\R^n)$. If 
\[\liminf_{\eps\to0}\tilde{F}_\eps(v_\eps)=+\infty,\]
the inequality is trivial, hence we can assume $\tilde{F}_\eps(v_\eps)\le C$ for every $\eps$ so that $v_\eps\in W^{1,p}_0(D_\eps)$ for every $\eps$. Moreover, 
\[k\int_{\Omega} \abs{\nabla v_\eps}^p\,dx+k_\eps c_1\int_{\Sigma_\eps}\abs{\nabla v_\eps}^p\,dx\le \tilde{F}_\eps(v_\eps)\le C,\]
whence $v\in W^{1,p}(\Omega)$ and, up to a subsequence (not relabelled), $v_\eps$ converges to $v$ weakly in $W^{1,p}(\Omega)$.\medskip

If $\alpha<+\infty$, by semicontinuity, 
\[\liminf_{\eps\to0}G(v_\eps)\le G(v),\]
and we are left to prove that
\begin{equation}\label{inf0}\liminf_{\eps\to0}k_\eps\int_{\Sigma_\eps} \gamma(x,\nabla v_\eps)\,dx\le \alpha\int_{\partial\Omega}\dfrac{\gamma(\sigma,\nu)}{h(\sigma)^{p-1}}\abs{v}^p\,d\sigma.\end{equation}

For every $\sigma\in\partial\Omega$ let 
\[\nu_\gamma(\sigma)=\dfrac{1}{p}\nabla_z\gamma(\sigma,\nu(\sigma)).\]
Then, by Euler's theorem on homogeneous functions, we have that
\[\nu_\gamma\cdot\nu=\gamma(\sigma,\nu)\ge c_1>0,\]
so that, arguing as in \autoref{conormalgraph}, we can describe $\Sigma_\eps$ as
\[\Sigma_\eps=\Set{\sigma+t\nu_\gamma(\sigma)\colon \sigma\in\partial\Omega,\,t\in(0, h_\eps(\sigma))},\]
for some positive function $h_\eps$, with 
\begin{equation}\label{gammagamma}\lim_{\eps\to0}\dfrac{\eps h(\sigma)}{h_\eps (\sigma)}=\gamma(\sigma,\nu),\end{equation}
uniformly on $\partial\Omega$. By the change of variable $x=\sigma+t\nu_\gamma$, we have 
\[\int_{\Sigma_\eps} \gamma(x,\nabla v_\eps)\,dx=\int_{\partial\Omega}\int_0^{h_\eps(\sigma)} \gamma(\sigma+t\nu_\gamma,\nabla v_\eps)\gamma(\sigma,\nu)(1+tq(t,\sigma))\,dt\,d\sigma,\]
where $q$ is bounded. Notice that, 
\[\abs{\gamma(\sigma+t\nu_\gamma,\nabla v_\eps)-\gamma(\sigma,\nabla v_\eps)}\le \omega(h_\eps)\abs{\nabla v_\eps}^p,\]
so that 
\[k_\eps\int_{\Sigma_\eps} \gamma(x,\nabla v_\eps)\,dx=k_\eps\int_{\partial\Omega}\int_0^{h_\eps(\sigma)} \gamma(\sigma,\nabla v_\eps)\gamma(\sigma,\nu)\,dt\,d\sigma+r_\eps,\]
with $r_\eps$ going to zero with $\eps$. By \autoref{lemmaIII.2} we have 
\[\gamma(\sigma,\nabla v_\eps)\ge \dfrac{\abs{\nabla v_\eps\cdot\nabla_z\gamma(\sigma,\nu) }^p}{\abs{\nu\cdot\nabla_z\gamma(\sigma,\nu)}^p}\gamma(\sigma,\nu)=\abs{\nabla v_\eps\cdot\nu_\gamma }^p\gamma(\sigma,\nu)^{1-p},\]
hence
\begin{equation}\label{inf1}k_\eps\int_{\Sigma_\eps} \gamma(x,\nabla v_\eps)\,dx\ge k_\eps\int_{\partial\Omega}\int_0^{h_\eps(\sigma)} \abs{\nabla v_\eps\cdot\nu_\gamma }^p\gamma(\sigma,\nu)^{2-p}\,dt\,d\sigma+r_\eps.\end{equation}
On the other hand, for $\Hn$-almost every $\sigma\in\partial\Omega$
\begin{equation}\label{tfci}\abs{v_\eps(\sigma)}^p=\abs*{\int_0^{h_\eps(\sigma)}\nabla v_\eps(\sigma+t\nu_\gamma)\cdot\nu_\gamma\,dt}^p\le \abs{ h_\eps(\sigma)}^{p-1}\int_0^{h_\eps(\sigma)}\abs{\nabla v_\eps(\sigma+t\nu_\gamma)\cdot\nu_\gamma}^p\,dt.\end{equation}
Using \eqref{inf1} and \eqref{tfci} we get
\begin{equation}\label{inf2}\displaystyle k_\eps\int_{\Sigma_\eps} \gamma(x,\nabla v_\eps)\,dx\ge \dfrac{k_\eps}{\eps^{p-1}}\int_{\partial\Omega} \abs*{\dfrac{h_\eps(\sigma) \gamma(\sigma,\nu)}{\eps h}}^{1-p}\dfrac{\gamma(\sigma,\nu)}{h(\sigma)^{p-1}}\abs{v_\eps }^p\,d\sigma+r_\eps.\end{equation}
Finally, using \eqref{gammagamma} and passing to the limit in \eqref{inf2}, by the convergence of $v_\eps$ to $v$ in $L^p(\partial\Omega)$, we have \eqref{inf0}.\medskip

If $\alpha=+\infty$, we have that for every $\alpha_0>0$ and $\eps>0$ sufficiently small, $k_\eps>\alpha_0\eps^{p-1}$, hence 
\[\tilde{F}_\eps(v_\eps)\ge G(v_\eps)+\alpha_0\eps^{p-1}\int_{\Sigma_\eps}\gamma(x,v_\eps)\,dx.\]
Using the liminf inequality proved for the case $\alpha<+\infty$ and passing to the limit for $\eps$ going to zero, we have
\[\liminf_{\eps\to0}\tilde{F}_\eps(v_\eps)\ge G(v)+\alpha_0\int_{\partial\Omega}\dfrac{\gamma(\sigma,\nu)}{h(\sigma)^{p-1}}\abs{v}^p\,d\sigma.\]
Then, passing to the limit for $\alpha_0$ going to infinity, we have 
\[\liminf_{\eps\to0}\tilde{F}_\eps(v_\eps)\ge\begin{cases}
    G(v)
&\text{if } v\in W^{1,p}_0(\Omega),\\[5pt]
+\infty &\text{if } v\in L^p(\R^n)\setminus W^{1,p}_0(\Omega),
\end{cases}\] that is, 
\[\liminf_{\eps\to0}\tilde{F}_\eps(v_\eps)\ge G_\infty(v).\]
\end{proof}

In \cite{BMM97}, the authors generalised the result to the case of \emph{non homogeneous} reinforcement, that is, they considered the case of the $p$-Laplacian equation with different exponent in $\Omega$ and $\Sigma_\eps$. Moreover, they allow $k_\eps$ to be a function with possibly different asymptotic behaviours with respect to $\eps^{p-1}$ on different subsets of $\partial\Omega$. Another direct generalisation of the result is the one contained in \cite{MMP08}, where the authors study the case of a right-hand side in $L^1$.

\subsubsection*{Convergence of minima}

It is well known that the $\Gamma$-convergence of a sequence of functionals is strictly related to the convergence of its minimum points and values, for instance, we have the following proposition (we refer the reader to \cite[Theorem 7.8]{DM93}).

\begin{prop}
    Let $X$ be a metric space and let $F,F_\eps\colon X\to\bar{\R}$ functions such that
    \begin{itemize}
        \item[i)] $F_\eps$ is equi-coercive, that is, for every $c>0$ the exists a compact subset $K_c\subset X$ such that
        \[\Set{x\in X|\,F_\eps(x)\le c}\subseteq K_c,\]
        for every $\eps>0$;
        \item[ii)] $F_\eps$ $\Gamma$-converges to $F$.
    \end{itemize}
    Then, $F$ admits minimum in $X$ and
    \[\min_X F=\lim_{\eps\to0}\left(\inf_X F_\eps\right).\]
    Moreover, if $x_\eps$ if a family in $X$ such that
    \[\lim_{\eps\to0}F_\eps(x_\eps)=\lim_{\eps\to0}\left(\inf_X F_\eps\right)\]
    and $x_\eps$ converges to $x_0$, then \[F(x_0)=\min_XF.\]
\end{prop}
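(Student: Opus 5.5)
We prove the fundamental theorem of $\Gamma$-convergence in the form stated. Set $m_\eps=\inf_X F_\eps$. We argue in three steps: a lower bound for $\liminf m_\eps$ via compactness and the liminf inequality, an upper bound for $\limsup m_\eps$ via recovery sequences, and then we combine the two.

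\textbf{Step 1 (lower bound and existence of a minimum).} Let $\ell=\liminf_{\eps\to0}m_\eps$. If $\ell=+\infty$ there is nothing to prove on this side, so assume $\ell<+\infty$. Choose a subsequence $\eps_j\to0$ with $m_{\eps_j}\to\ell$, and points $x_j$ with $F_{\eps_j}(x_j)\le m_{\eps_j}+\eps_j$ (or $\le -1/\eps_j$ if $m_{\eps_j}=-\infty$). For $j$ large, $F_{\eps_j}(x_j)\le c$ for some finite $c$. By equi-coercivity, $x_j\in K_c$, so a further subsequence converges to some $\bar x\in X$. The liminf inequality of $\Gamma$-convergence is formulated for families, but it applies along subsequences: define $x_\eps$ to be $x_j$ for $\eps=\eps_j$ and $\bar x$ otherwise. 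We expect this to be the main subtlety, and it is resolved by noting that the liminf inequality holds along any subsequence $\eps_j\to 0$, which is the standard equivalent sequential form of $\Gamma$-convergence. This gives
\[F(\bar x)\le\liminf_j F_{\eps_j}(x_j)\le \ell.\]
Hence $\inf_X F\le \liminf_\eps m_\eps$. This also rules out $\ell=-\infty$ unless $F(\bar x)=-\infty$.

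\textbf{Step 2 (upper bound).} For any $y\in X$, take a recovery family $y_\eps\to y$ with $\limsup F_\eps(y_\eps)\le F(y)$. Then $\limsup m_\eps\le \limsup F_\eps(y_\eps)\le F(y)$, and taking the infimum over $y$ gives $\limsup m_\eps\le\inf_X F$.

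\textbf{Step 3 (conclusion).} Combining the two steps,
\[F(\bar x)\le\liminf m_\eps\le\limsup m_\eps\le \inf_X F\le F(\bar x).\]
So $\bar x$ is a minimiser and $\lim_\eps m_\eps=\min_X F$. In the degenerate case $\ell=+\infty$, Step 2 gives $\inf_X F=+\infty$, so $F\equiv+\infty$ and every point is a minimiser.

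For the final claim, if $x_\eps\to x_0$ and $F_\eps(x_\eps)\to\lim m_\eps$, the liminf inequality gives
\[F(x_0)\le\liminf F_\eps(x_\eps)=\min_X F,\]
and therefore $F(x_0)=\min_X F$.
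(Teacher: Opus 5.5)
Your proof is correct. The paper does not prove this proposition; it only cites \cite[Theorem 7.8]{DM93}, which is stated there for sequences in general topological spaces. Your argument is the standard direct proof of that result, correctly adapted to families indexed by $\eps$ in a metric space: compactness plus the liminf inequality give $F(\bar x)\le\liminf_\eps\inf_X F_\eps$, recovery families give $\limsup_\eps\inf_X F_\eps\le\inf_X F$, you then squeeze, and the cases $\pm\infty$ are handled.

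The padding step does not need an appeal to an ``equivalent sequential form''; your construction already suffices. Choose the $\eps_j$ strictly decreasing, so that the padded family is well defined and converges to $\bar x$. Then $F(\bar x)\le\liminf_{\eps\to0}F_\eps(x_\eps)\le\liminf_j F_{\eps_j}(x_j)$, because a liminf over the whole family never exceeds the liminf along a subfamily.
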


Let $f\in L^{p'}(\R^n)$, we want to apply the previous proposition to prove that, when $\alpha$ is strictly positive, the minimisers of the functional
\[\mathcal{F}_\eps(v)=F_\eps(v)+\int_{\R^n} fv\,dx\]
converge to the ones of
\[\mathcal{G}_\alpha(v)=G_\alpha(v)+\int_{\R^n} fv\,dx.\]
By \autoref{oss+cont}, $\mathcal{F}_\eps$ $\Gamma$-converges to $\mathcal{G}_\alpha$ so that we just need to show the equi-coercivity of $\mathcal{F}_\eps$. 

\begin{prop}
    Let $\alpha>0$. Let $v_\eps$ be a family in $L^p(\R^n)$ such that 
    \[\mathcal{F}_\eps(v_\eps)\le c\]
    for some $c>0$, then $v_\eps$ is relatively compact in $L^p(\R^n)$.
\end{prop}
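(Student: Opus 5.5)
The plan is to show that the family $v_\eps$ is bounded in $W^{1,p}(D_{\bar\eps})$ after extension by zero, with supports in the fixed bounded set $D_{\bar\eps}$. Rellich--Kondrachov then gives relative compactness in $L^p(D_{\bar\eps})$, hence in $L^p(\R^n)$, since every $v_\eps$ vanishes outside $D_{\bar\eps}$.

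First I will get a lower bound on the energy. Since $\mathcal{F}_\eps(v_\eps)\le c<+\infty$, we have $v_\eps\in W^{1,p}_0(D_\eps)$. Extended by zero, $v_\eps$ lies in $W^{1,p}_0(D_{\bar\eps})$ with $\nabla v_\eps=0$ a.e. outside $D_\eps$. From the assumptions on $G$ and $g$,
\[c\ge k\int_\Omega\abs{\nabla v_\eps}^p\,dx+c_1k_\eps\int_{\Sigma_\eps}\abs{\nabla v_\eps}^p\,dx-\norma{f}_{L^{p'}}\norma{v_\eps}_{L^p}.\]
Since $\alpha>0$, for $\eps$ small we have $k_\eps\ge a\,\eps^{p-1}$ for some $a>0$ (any $a<\alpha$ works, or an arbitrary $a$ if $\alpha=\infty$).

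The key step is to control $\norma{v_\eps}_{L^p(D_\eps)}$ by the weighted energy $E_\eps=\int_\Omega\abs{\nabla v_\eps}^p+\eps^{p-1}\int_{\Sigma_\eps}\abs{\nabla v_\eps}^p$. I will argue along normal segments as in \eqref{tfci}, using the zero trace on $\partial D_\eps$. This gives the analogue of \eqref{epscontrol}:
\[\int_{\partial\Omega}\abs{v_\eps}^p\,d\sigma\le C\eps^{p-1}\int_{\Sigma_\eps}\abs{\nabla v_\eps}^p\,dx,\]
and also
\[\int_{\Sigma_\eps}\abs{v_\eps}^p\,dx\le C\eps^p\int_{\Sigma_\eps}\abs{\nabla v_\eps}^p\,dx.\]
On $\Omega$ I will use the Friedrichs-type inequality
\[\norma{w}^p_{L^p(\Omega)}\le C\Big(\int_\Omega\abs{\nabla w}^p+\int_{\partial\Omega}\abs{w}^p\Big),\]
valid for $w\in W^{1,p}(\Omega)$ on the $C^{1,1}$ domain and proved by the usual contradiction and compactness argument. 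Together these give $\norma{v_\eps}^p_{L^p(D_\eps)}\le C E_\eps\le C'\big(k\int_\Omega\abs{\nabla v_\eps}^p+c_1k_\eps\int_{\Sigma_\eps}\abs{\nabla v_\eps}^p\big)$. Inserting this into the energy bound yields $X\le c+C X^{1/p}$ with $X$ the bracketed energy. Because $p>1$, this forces $X$ to be bounded, and then $\norma{v_\eps}_{L^p}$ is bounded too.

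The main obstacle is that $\int_{\Sigma_\eps}\abs{\nabla v_\eps}^p$ is controlled only with the weight $k_\eps$, which may tend to $0$. So $v_\eps$ need not be bounded in $W^{1,p}(D_{\bar\eps})$, and Rellich cannot be applied on the whole of $D_{\bar\eps}$. I will get around this by splitting. On $\Omega$, $v_\eps$ is bounded in $W^{1,p}(\Omega)$, so by Rellich it is relatively compact in $L^p(\Omega)$. On $\Sigma_\eps$, the estimate above gives
\[\int_{\Sigma_\eps}\abs{v_\eps}^p\le C\eps^p\int_{\Sigma_\eps}\abs{\nabla v_\eps}^p\le C\,\eps\,\frac{\eps^{p-1}}{k_\eps}\,X\to0.\]
Hence $v_\eps\chi_{\Sigma_\eps}\to0$ in $L^p(\R^n)$, and $v_\eps$ vanishes elsewhere. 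Therefore $v_\eps$ is relatively compact in $L^p(\R^n)$, and every limit point is supported in $\bar\Omega$. This is consistent with the $\Gamma$-limit $G_\alpha$ being finite only on functions in $W^{1,p}(\Omega)$.
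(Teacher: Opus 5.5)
Your proposal is correct and is essentially the paper's argument: the energy lower bound, the normal-segment estimates $\int_{\Sigma_\eps}\abs{v_\eps}^p\le C\eps^p\int_{\Sigma_\eps}\abs{\nabla v_\eps}^p$ and $\int_{\partial\Omega}\abs{v_\eps}^p\le C\eps^{p-1}\int_{\Sigma_\eps}\abs{\nabla v_\eps}^p$ combined with $\eps^{p-1}\le Ck_\eps$, the Friedrichs inequality on $\Omega$, and finally Rellich on $\Omega$ together with $\norma{v_\eps}_{L^p(\Sigma_\eps)}\to0$. You rightly abandon your opening plan of a $W^{1,p}(D_{\bar\eps})$ bound in favour of exactly this splitting, and the only other difference is cosmetic: you absorb the source term via $X\le c+CX^{1/p}$ rather than via Young's inequality with a small parameter $\delta$.
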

\begin{proof}

    By our assumptions on $F_\eps$, we have
    \[k\int_\Omega \abs{\nabla v_\eps}^p\,dx+k_\eps c_1 \int_{\Sigma_\eps}\abs{\nabla v_\eps}^p\,dx+\int_{D_\eps}fv_\eps\,dx\le \mathcal{F}_\eps(v_\eps)\le c.\]
    Then, using H\"{o}lder's Young's and  inequalities, we have that for every $\delta>0$
    \begin{equation}\label{int0}\int_\Omega \abs{\nabla v_\eps}^p\,dx+k_\eps  \int_{\Sigma_\eps}\abs{\nabla v_\eps}^p\,dx\le c_{\delta,f} +\delta \int_{D_\eps} \abs{v_\eps}^p\,dx.\end{equation}

    For every $t\in[0,\eps h]$ and $\sigma\in\partial\Omega$ we have 
    \[\abs{v_\eps(\sigma+t\nu)}^p=\abs*{\int_t^{\eps h(\sigma)}\partial_\nu v_\eps(\sigma+s\nu)\,ds}^p\le  \left(\eps h(\sigma)\right)^{p-1}\int_0^{\eps h(\sigma)}\abs{\nabla v_\eps(\sigma+s\nu)}^p\,ds,\]
    so that, integrating, we get
    \begin{equation}\label{int1}
    \int_{\Sigma_\eps} \abs{v_\eps}^p\,dx\le C \eps^{p}\int_{\Sigma_\eps}\abs{\nabla v_\eps}^p\,dx \le C \eps k_\eps \int_{\Sigma_\eps}\abs{\nabla v_\eps}^p\,dx
    \end{equation}
    and
     \begin{equation}\label{int2}
     \int_{\partial\Omega} \abs{v_\eps}^p\,d\sigma\le C \eps^{p-1}\int_{\Sigma_\eps}\abs{\nabla v_\eps}^p\,dx\le C k_\eps \int_{\Sigma_\eps}\abs{\nabla v_\eps}^p\,dx,
    \end{equation}
    where we used the fact that, by the assumption on $\alpha>0$, we have that $\eps^{p-1}/k_\eps $ is bounded. Using a Poincaré-type inequality with trace term (also known as Friedrichs' inequality) and \eqref{int2}, we have 
    \begin{equation}\label{int3}\int_\Omega \abs{v_\eps}^p\,dx\le C\left[\int_\Omega \abs{\nabla v_\eps}^p\,dx+\int_{\partial\Omega} \abs{v_\eps}^p\,d\sigma\right]\le C\left[\int_\Omega \abs{\nabla v_\eps}^p\,dx+ k_\eps \int_{\Sigma_\eps}\abs{\nabla v_\eps}^p\,dx\right].\end{equation}
    Then, using \eqref{int1} and \eqref{int3} in \eqref{int0}, for a suitable choice of $\delta>0$ we finally have
    \[\int_\Omega \abs{\nabla v_\eps}^p\,dx+k_\eps  \int_{\Sigma_\eps}\abs{\nabla v_\eps}^p\,dx\le C.\]
    Hence $v_\eps$ in uniformly bounded in $W^{1,p}(\Omega)$ and, by \eqref{int1},
    \[\lim_{\eps\to0}\int_{\Sigma_\eps} \abs{v_\eps}^p\,dx=0,\]
    which proves the assertion.
\end{proof}

\subsection{Subsequent developments and other boundary conditions}\label{sbsnew}

The basic result of the previous section can be summarised as follows. Given a smooth $C^{1,1}$ bounded open set $\Omega\subset\R^n$, and thin layer $\Sigma_\eps$ described by normal graph on $\partial\Omega$ of the function $\eps h$, where $h\colon \partial\Omega\to(0,+\infty)$ is a Lipschitz function, for any $f\in L^2(\R^n)$, the solution $u_\eps$ to the diffraction problem
\begin{equation}\label{eqbasic}\begin{cases}
    -\divv(A_1\nabla u_\eps) = f &\text{in }\Omega,\\[5pt]
    -\eps\divv(A_2\nabla u_\eps) = f &\text{in }\Sigma_\eps,\\[5pt]
    u_\eps|_\Omega=u_\eps|_{\Sigma_\eps} &\text{on }\partial\Omega,\\[5pt]
    \partial_{\nu_{A_1}} u_\eps|_\Omega=\eps\partial_{\nu_{A_2}} u_\eps|_{\Sigma_\eps} &\text{on }\partial\Omega,\\[5pt]
    u_\eps=0 &\text{on }\partial D_\eps,
\end{cases}\end{equation}
with Dirichlet boundary condition on the boundary of $D_\eps=\bar{\Omega}\cup\Sigma_\eps$, converges to the solution to the Robin boundary value problem
\begin{equation}\label{eqlimbasic}\begin{cases}
    -\divv(A_1 \nabla u) = f &\text{in }\Omega,\\[5 pt]
    \partial_{\nu_{A_1}} u + \dfrac{\gamma_{A_2}}{h}u=0 &\text{on }\partial\Omega,
\end{cases}\end{equation}

where $\gamma_{A_2}=\nu_{A_2}\cdot\nu$ and $\nu_{A_1},\nu_{A_2}$ are the conormal directions $A_1 \nu$,$A_2\nu$ on $\partial\Omega$ from "inside" and "outside" $\Omega$ respectively.\medskip

In many applications, however, it could be useful to work with less regular sets, for instance, let $\Omega$ be a bounded open set with Lipschitz boundary and  let $D_\eps$ be a generic family of bounded open sets containing $\Omega$ such that
\[d_\eps=\sup\Set{d(x,\Omega)\colon x\in D_\eps},\]
goes to zero as $\eps$ goes to zero. Under these mild assumptions, in \cite{BDMM89}, the authors derive the necessary and sufficient condition to have that the solution to the diffraction problem \eqref{eqbasic}, for any $f$, converges, in $L^2$, to the solution to a boundary value problem of the type \eqref{eqlimbasic}. The boundary parameter $\gamma_{A_2}/h$, however, must be replaced with a (capacitary) measure $\mu_A$, which is supported by $\partial\Omega$ and depends uniquely on the family $D_\eps$ (and on the elliptic operator). The limit problem can be formally written as

\[\begin{cases}
    -\divv(A_1\nabla u) = f &\text{in }\Omega,\\[5pt]
    \partial_{\nu_{A_1}} u +\mu_A u=0 &\text{on }\partial\Omega,
\end{cases}\]
and has to be interpreted, variationally, as the first order optimality condition for the minimisation in $H^1(\Omega)$ of the energy
\[\int_\Omega A_1\nabla v\cdot\nabla v\,dx+\int v^2\,d\mu_A-2\int_\Omega fv\,dx.\]
As the necessary and sufficient condition is quite involved, we refer the reader to \cite[Theorem 6.1]{BDMM89} for the precise statement. We remark, however, that the simple condition \[d_\eps<c\eps,\] for some $c>0$, is a sufficient condition for the limit to hold. We also refer the reader, for instance, to \cite{CV11, CV15} and the references therein, for the reinforcement of fractal sets, to \cite{AKK26, AKKtris} for the \emph{partial} reinforcement of a piece-wise flat subset of the boundary, and to \cite{EJEAEMERS24} for the case of a vectorial problem.
\medskip

Other generalisations include, for instance, the case of the reinforcement with a thin layer with \emph{oscillating thickness} considered by \cite{BK87} (see also \cite{MV02, LZ11} for more recent results on the topic); the reinforcement for the \emph{torsion problem in multiconnected domains} studied in \cite{BGMM02} and \cite{MV02}, were the function $u_\eps$ is required to have a constant, unprescribed, trace on the inner boundaries and a prescribed flux condition. The case of different boundary conditions on $\partial D_\eps$ as also been studied. \medskip

In particular, in \cite{MS03}, the authors consider the case of \emph{Neumann boundary conditions}. Namely, let $\Omega\subset\R^n$ be a smooth bounded set, $\Sigma_\eps$ an uniform layer of thickness $\eps>0$, $D_\eps=\bar{\Omega}\cup\Sigma_\eps,$ and consider the boundary value problem
\begin{equation}\label{eqN}\begin{cases}
        -k\Delta u_{\eps} = f &\text{in } \Omega_\eps\\[5 pt]
         -k_\eps\Delta u_{\eps} = f &\text{in } \Sigma_\eps\\[5 pt]
         u_\eps|_{\Omega_\eps}=u_\eps|_{\Sigma_\eps} &\text{on } \partial\Omega,\\[5 pt]
         k \partial_\nu u_\eps|_{\Omega_\eps} = k_\eps \partial_\nu u_\eps|_{\Sigma_\eps} &\text{on }\partial\Omega,\\[5 pt]
         \partial_{\nu_\eps}u_\eps=0 &\text{ on } \partial D_\eps,\end{cases}\end{equation}
         where $f\in L^2(\R^n)$ satisfies the compatibility condition 
         \[\int_\Omega f\,dx=\int_{\Sigma_\eps}f\,dx=0,\]
         for every $\eps>0$. The authors prove that, if $k_\eps$ goes to zero as $\eps$ goes to zero, and $u_\eps$ is a solution to \eqref{eqN}, denoting by $(u_\eps)_\Omega$ its mean in $\Omega$, the function $u_\eps-(u_\eps)_\Omega$ converges, uniformly in the compact subset of $\Omega$, to the solution to
         \[\begin{cases}
              -k\Delta v=f &\text{in }\Omega,\\[5 pt]
         \partial_\nu v =0 &\text{on }\partial\Omega,\\[5 pt]
         \displaystyle\int_\Omega v\,dx=0.
         \end{cases}\]
          It is also possible to consider the case in which $k_\eps$ goes to infinity as $\eps$ goes to zero. The limit problem will then depend on the limit $\eta$ of $\eps k_\eps$. In particular, letting $v_\eps$  be the solution to \eqref{eqN} with zero mean in $D_\eps$, if $\eta$ is finite, then $v_\eps$ converges, strongly in $H^1(\Omega)$ to the solution of the following Wentzel problem
         \[\begin{cases}
             -k\Delta v = f &\text{in }\Omega,\\[5 pt]
             k\partial_\nu v + \eta \Delta_{\partial\Omega} v =0 &\text{on }\partial\Omega,\\[5 pt]
             \displaystyle\int_\Omega v\,dx=0,
         \end{cases}
         \]
         where $\Delta_{\partial\Omega}$ is the tangential Laplacian on $\partial\Omega$. If $\eta=+\infty$, $v_\eps$ converges to $w-(w)_\Omega$, where $w$ is the solution to 
         \[\begin{cases}
             -k\Delta w = f &\text{in }\Omega,\\[5 pt]
             w=0 &\text{on }\partial\Omega.
         \end{cases}\]
We also remark that in the same paper, the authors also consider mixed Dirichlet-Neumann boundary conditions for particular geometries, and use $\Gamma$-convergence to study semi-linear equations with Dirichlet boundary conditions.\medskip

The case of \emph{Robin boundary conditions}, on the other hand, has been more recently considered by \cite{LWZZ12} (see also \cite{DPNST21, ACNT24, AC25}). Namely, consider the boundary value problem
\begin{equation}\label{eqR}\begin{cases}
        -k\Delta u_{\eps} = f &\text{in } \Omega_\eps\\[5 pt]
         -k_\eps\divv(A_2 \nabla u_{\eps}) = f &\text{in } \Sigma_\eps\\[5 pt]
         u_\eps|_{\Omega_\eps}=u_\eps|_{\Sigma_\eps} &\text{on } \partial\Omega,\\[5 pt]
         k \partial_\nu u_\eps|_{\Omega_\eps} = k_\eps \partial_{\nu_{A_2}} u_\eps|_{\Sigma_\eps} &\text{on }\partial\Omega,\\[5 pt]
         k_\eps\partial_{\nu_{\eps,A_2}}u_\eps+\beta u_\eps=0 &\text{ on } \partial D_\eps,\end{cases}\end{equation}
         where $\nu_{\eps,A_2}=A_2\nu_\eps$ is the conormal to $\partial D_\eps$, and $\beta$ is a positive constant. The authors prove that, if $k_\eps$ goes to zero as $\eps$ goes to zero, and we denote by $\alpha$ the limit of $k_\eps/\eps$, the function $u_\eps$, solution to \eqref{eqR}, converges (strongly in $L^2(\Omega)$) to the solution of
         \[\begin{cases}
              -k\Delta u=f &\text{in }\Omega,\\[5 pt]
         k\partial_\nu u+\dfrac{\beta \alpha \gamma_{A_2}}{\beta+\alpha\gamma_{A_2}}u =0 &\text{on }\partial\Omega.\\[5 pt]
         \end{cases}\]
         Actually, the problem was originally studied for the parabolic case, but the same result holds for the elliptic case.
         We notice that this type of effective boundary condition was already mentioned in the book by Carslaw and Jaeger (\cite{CJ59}) in the discussion about the case of a
"\emph{thin surface skin of poor conductor}". In \cite{LWZZ12}, the authors prove the result also in the case of "\emph{optimally
aligned coating}" introduced in \cite{RW06} (see also \cite{LRWZ09} for the case of Dirichlet boundary conditions). That is, the result still holds true under milder assumptions. Namely, instead of assuming that the matrix $A_{2,\eps}=k_\eps A_2$ is small in \emph{all} directions, it is enough to assume that $A_{2,\eps}$ is bounded and small in the normal direction to $\partial\Omega$. That is, $\nu$ is an eigenvector of eigenvalue $A_{2,\eps}(x)\nu\cdot\nu=k_\eps \gamma_{A_2}(\sigma(x))$, where $\sigma(x)$ denotes the orthogonal projection on $\partial\Omega$. We also refer the reader to \cite{CPW12} (see also \cite{LLW21}) for the $2$-dimensional case in which the eigenvalue of $A_{2,\eps}$ in the tangential direction is allowed to diverge.\medskip

The corresponding eigenvalue problems have also been extensively studied, see for instance \cite{RW06, LL17, Y18, GNPM21} for the Dirichlet boundary condition, \cite{GLNP06, GLNP06bis, GMP11} for the Neumann boundary condition, and \cite{ZRWZ09, DPNST21, CV26} for the Robin one, and the references therein.\medskip 

Finally, we want to stress that the techniques of \cite{BCF80, AB86} have also been used to study reinforcement problems for different differential operators such as the clamped plate \cite{AB86bis}, diffusion equation with high specific heat \cite{CR90}, the Stefan problem for large heat conductivity \cite{SV97}, the stationary fluid flow problem \cite{EJB07,BEJ13}, and the porous medium equation \cite{CDP24}.   

\section{The optimisation point of view}\label{sopt}

In \cite{B88}, Buttazzo proposed the study of optimisation problems related to the limit equation of the reinforcement problem with Dirichlet boundary conditions
\begin{equation}\label{mod}
    \begin{cases}
        -\Delta u_h = f &\text{in }\Omega,\\[5pt]
        \partial_\nu u_h+\dfrac{1}{h}u_h=0 &\text{on }\partial\Omega.
    \end{cases}
\end{equation}
As discussed in the previous section, from a physical perspective, the solution $u_h$ to equation \eqref{mod} is an approximation of the steady-state temperature in a conductor $\Omega$, surrounded by a thin layer of insulating material. The function $f$ represents the \emph{heat source} in $\Omega$, while the function $h$ represents the \emph{shape} of the insulating layer. The optimisation problem proposed in \cite{B88} can then be stated as 

\begin{quote}
    Given $\Omega$ and $f$, what is the "best" distribution of insulator around $\partial\Omega$?
\end{quote}

To give a mathematical formalisation of the problem, an optimality criterion must be defined. A natural choice is the following. For every $h\colon\partial\Omega\to\R^+$ and $v\in H^1(\Omega)$ consider
\[\mathcal{E}(v,h)=\int_\Omega \abs{\nabla v}^2\,dx+\int_{\partial\Omega}\dfrac{v^2}{h}\,d\Hn-2\int_\Omega fv\,dx,\]
the variational energy associated to \eqref{mod}, then
\[\mathcal{E}(h)=\min_{v\in H^1(\Omega)} \mathcal{E}(v,h)=\mathcal{E}(u_h,h)=-\int_\Omega fu_h\,dx.\]
We will say that a configuration $h_1$ is \emph{better} than a configuration $h_2$ if
\[\mathcal{E}(h_1)\le\mathcal{E}(h_2).\]
Notice that, when the heat source is \emph{uniformly distributed} (i.e. $f$ is constant), the energy $\mathcal{E}(h)$ is proportional to the opposite of the average temperature in $\Omega$, hence a better configuration of insulating material corresponds to a higher average temperature. \medskip

The optimisation problem consists in minimising the energy $\mathcal{E}$ among non-negative functions $h\colon\partial\Omega\to\R^+$ which satisfy the integral constraint
\begin{equation}\label{intcost}\int_{\partial\Omega}h\,d\Hn=m.\end{equation}
The integral constraint is to be interpreted as an approximated measure constraint on the insulating layer. Indeed, if we define 
\[\Sigma_{\eps h}=\Set{\sigma+t\nu\colon \sigma\in\partial\Omega,\,t\in(0,\eps h)},\]
then we have that
\[\abs{\Sigma_{\eps h}} = \eps\int_{\partial\Omega}h\,d\Hn+O(\eps^2).\]
We refer the reader to \cite{AKK25bis} for numerical results on the problem.\medskip

In \cite{B88}, the author also proposed the investigation of other optimality criteria such as 
\[\int_\Omega \abs{u_h-u_{\text{obj}}}^2\,dx,\]
where $u_{\text{obj}}$ is a desired temperature, or more generally 
\[\int_\Omega g(x,u_h)\,dx+\int_{\partial\Omega}j(\sigma,h,u_h)\,d\Hn(\sigma),\]
for suitable choices of functions $g$ and $j$. However, to our knowledge, this type of optimisation problem is still open for the limit equation in the Dirichlet case and is partially solved in the Robin one (see \cite{CV26}) under the weaker constraint 
\[\int_{\partial\Omega} h\,d\Hn\le m.\]\medskip

The corresponding optimisation problem for the eigenvalue, that is to minimise the principal eigenvalue, $\lambda(h)$, of the problem
\begin{equation}
    \label{modeig}
    \begin{cases}
        -\Delta u = \lambda u &\text{in }\Omega,\\[5pt]
        \partial_\nu u+\dfrac{1}{h}u=0 &\text{on }\partial\Omega,
    \end{cases}
\end{equation}
under the integral constraint \eqref{intcost}, was originally proposed by Friedman in \cite{F80}. We notice that from a thermal insulation point of view, the eigenvalue $\lambda(h)$ represents the rate of decay of the temperature. We refer the reader to \cite{BB19, BBK25} for numerical results on the problem.

\subsection{Optimisation of variational energy}
Fix $m>0$, $\Omega\subset\R^n$ a bounded Lipschitz open set, and let
\[\mathcal{H}_m(\partial\Omega)=\Set{ h\in L^1(\partial\Omega)\colon\,h\ge0,\,\int_{\partial\Omega} h\,d\sigma=m
}.\]
We are interested in the minimisation problem
\begin{equation}\label{minE}
    \min\Set{\mathcal{E}(v,h)\colon v\in H^1(\Omega),\, h\in \mathcal{H}_m(\partial\Omega)}
\end{equation}
studied in \cite{B88} (see also \cite{BBN17, BBN17bis})

\begin{prop}\label{minaux1}
    For every $v\in L^2(\partial\Omega)$ there exists $h_v\in \mathcal{H}_m(\partial\Omega)$ solution to 
    \begin{equation}\label{aux1}\min_{h\in\mathcal{H}_m(\partial\Omega)}\int_{\partial\Omega}\dfrac{v^2}{h}\,d\sigma.\end{equation}
    Moreover, if $v$ is not identically zero, the solution is unique and
    \[h_v=m\dfrac{\abs{v}}{\displaystyle\int_{\partial\Omega}\abs{v}\,d\sigma}.\]
\end{prop}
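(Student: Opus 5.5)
The plan is to prove a Cauchy--Schwarz lower bound that does not depend on $h$, and then check that the proposed $h_v$ attains it. Throughout, we use the convention that $v^2/h=0$ where $v=0$, and $v^2/h=+\infty$ where $v\ne0$ and $h=0$.

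\emph{The case $v\equiv0$.} The functional vanishes identically on $\mathcal{H}_m(\partial\Omega)$. Hence every element of $\mathcal{H}_m(\partial\Omega)$ is a minimiser, for instance the constant $m/\Hn(\partial\Omega)$.

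\emph{The lower bound for $v\not\equiv0$.} Fix any $h\in\mathcal{H}_m(\partial\Omega)$ for which the integral in \eqref{aux1} is finite. Then $h>0$ $\Hn$-a.e.\ on $\{v\ne0\}$. Write $\abs{v}=\frac{\abs{v}}{\sqrt h}\sqrt h$ on $\{v\neq0\}$ and apply Cauchy--Schwarz:
\[
\left(\int_{\partial\Omega}\abs{v}\,d\sigma\right)^2\le\int_{\{v\ne0\}}\frac{v^2}{h}\,d\sigma\int_{\{v\ne0\}}h\,d\sigma\le m\int_{\partial\Omega}\frac{v^2}{h}\,d\sigma .
\]
This gives
\[
\int_{\partial\Omega}\frac{v^2}{h}\,d\sigma\ge\frac{1}{m}\left(\int_{\partial\Omega}\abs{v}\,d\sigma\right)^2 .
\]
The right-hand side is finite and positive, because $v\in L^2(\partial\Omega)\subset L^1(\partial\Omega)$ on the bounded set $\partial\Omega$ and $v\not\equiv0$.

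\emph{Attainment.} The function $h_v=m\abs{v}/\int_{\partial\Omega}\abs{v}\,d\sigma$ is nonnegative, lies in $L^1(\partial\Omega)$ and has total mass $m$, so $h_v\in\mathcal{H}_m(\partial\Omega)$. Computing directly,
\[
\int_{\partial\Omega}\frac{v^2}{h_v}\,d\sigma=\frac{1}{m}\left(\int_{\partial\Omega}\abs{v}\,d\sigma\right)\int_{\{v\ne0\}}\abs{v}\,d\sigma=\frac{1}{m}\left(\int_{\partial\Omega}\abs{v}\,d\sigma\right)^2 .
\]
So $h_v$ attains the lower bound and is a minimiser.

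\emph{Uniqueness.} This is the only delicate step. Suppose $h$ is any minimiser. Then both inequalities in the chain above must be equalities.
\begin{itemize}
\item Equality in the second inequality forces $\int_{\{v=0\}}h\,d\sigma=0$, so $h=0$ a.e.\ on $\{v=0\}$.
\item Equality in Cauchy--Schwarz on $\{v\ne0\}$ forces $\sqrt h=\lambda\abs{v}/\sqrt h$ a.e.\ for some constant $\lambda\ge0$, that is, $h=\lambda\abs{v}$ a.e.\ on $\{v\ne0\}$.
\end{itemize}
Combining the two, $h=\lambda\abs{v}$ a.e.\ on all of $\partial\Omega$. The mass constraint $\int_{\partial\Omega}h\,d\sigma=m$ then fixes $\lambda=m/\int_{\partial\Omega}\abs{v}\,d\sigma$, and therefore $h=h_v$.

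Two points need care in the Cauchy--Schwarz equality case. First, it must be applied to the functions $\abs{v}/\sqrt h$ and $\sqrt h$, which are both nonzero in $L^2(\{v\ne0\})$. Second, since the integrals are finite, $h>0$ a.e.\ on $\{v\neq0\}$, so the manipulations above are legitimate there.
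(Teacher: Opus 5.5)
Your proof is correct and follows the same route as the paper: you prove the Cauchy--Schwarz (H\"older) lower bound $\int_{\partial\Omega} v^2/h\,d\sigma \ge \frac{1}{m}\bigl(\int_{\partial\Omega}\abs{v}\,d\sigma\bigr)^2$, show that $h_v$ attains it by direct computation, and get uniqueness from the equality case. Your explicit handling of the set $\{v=0\}$, where the mass constraint forces $h=0$, fills in a detail the paper leaves implicit.
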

\begin{proof}
    If $v$ is identically zero, every function $h$ is a solution to \eqref{aux1}. Assume $v\ne0$, by H\"{o}lder's inequality we have that for every $h\in\mathcal{H}_m(\partial\Omega)$
    \[\left(\int_{\partial\Omega} \abs{v}\right)^2\le\int_{\partial\Omega}\dfrac{v^2}{h}\,d\sigma\int_{\partial\Omega}h\,d\sigma=m\int_{\partial\Omega}\dfrac{v^2}{h}\,d\sigma,\]
    that is
    \[\int_{\partial\Omega}\dfrac{v^2}{h}\,d\sigma\ge \dfrac{1}{m}\left(\int_{\partial\Omega} \abs{v}\right)^2\]
    for every $h\in\mathcal{H}_m(\partial\Omega)$. On the other hand, setting 
     \[h_v=m\dfrac{\abs{v}}{\displaystyle\int_{\partial\Omega}\abs{v}\,d\sigma},\]
     by direct computation, we have
     \[\int_{\partial\Omega}\dfrac{v^2}{h_v}\,d\sigma= \dfrac{1}{m}\left(\int_{\partial\Omega} \abs{v}\right)^2.\]
     Finally, the uniqueness comes from the equality case in H\"{o}lder's inequality.
\end{proof}
\begin{oss}
    We remark that the \autoref{minaux1} still holds if we consider $h$ in the larger space
    \[\mathcal{H}_m^-(\partial\Omega)=\Set{ h\in L^1(\partial\Omega)\colon\,h\ge0,\,\int_{\partial\Omega} h\,d\sigma\le m
}\]
\end{oss}

Using \autoref{minaux1} we have that problem \eqref{minE} is equivalent to the auxiliary problem
\begin{equation}\label{aux2}
    \min \Set{\int_\Omega \abs{\nabla v}^2\,dx+\dfrac{1}{m}\left(\int_{\partial\Omega} \abs{v}\right)^2-2\int_\Omega fv\,dx\colon\,v\in H^1(\Omega)}.
\end{equation}
The following Poincaré-type inequality holds

\begin{prop}
    There exists $C>0$ such that, for every $v\in H^1(\Omega)$
    \begin{equation}\label{pb}
        \int_\Omega v^2\,dx\le C\left[\int_\Omega \abs{\nabla v}^2\,dx+\left(\int_{\partial\Omega} \abs{v}\right)^2\right].
    \end{equation}
\end{prop}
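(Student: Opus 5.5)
I would prove \eqref{pb} by the standard compactness (contradiction) argument, using the Rellich--Kondrachov theorem and the continuity of the trace operator $H^1(\Omega)\to L^1(\partial\Omega)$ on the bounded Lipschitz set $\Omega$. I would take $\Omega$ connected here, as is implicit in this setting; otherwise one applies the argument on each component and adds the results.

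Suppose the inequality fails. Then for every $k$ there is $v_k\in H^1(\Omega)$ with
\[\int_\Omega v_k^2\,dx> k\left[\int_\Omega \abs{\nabla v_k}^2\,dx+\left(\int_{\partial\Omega}\abs{v_k}\,d\sigma\right)^2\right].\]
Since both sides are $2$-homogeneous, I normalise so that $\norma{v_k}_{L^2(\Omega)}=1$. Then $\int_\Omega\abs{\nabla v_k}^2\,dx<1/k$ and $\int_{\partial\Omega}\abs{v_k}\,d\sigma<1/\sqrt{k}$. Hence $(v_k)$ is bounded in $H^1(\Omega)$.

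By Rellich--Kondrachov, a subsequence converges to some $v$ weakly in $H^1(\Omega)$ and strongly in $L^2(\Omega)$. In particular $\norma{v}_{L^2(\Omega)}=1$. By weak lower semicontinuity, $\nabla v=0$, so $v$ equals a constant $c$ because $\Omega$ is connected, and $\abs{c}\abs{\Omega}^{1/2}=1$.

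The step needing the most care is passing to the limit in the boundary term. The trace is only weakly continuous along weakly convergent sequences, so I would not rely on that. Instead, since $\nabla v_k\to 0$ in $L^2$ and $v_k\to v$ in $L^2$, the convergence $v_k\to c$ is actually strong in $H^1(\Omega)$. Continuity of the trace operator from $H^1(\Omega)$ into $L^2(\partial\Omega)\subset L^1(\partial\Omega)$, where $\partial\Omega$ has finite measure, then gives
\[\int_{\partial\Omega}\abs{v_k}\,d\sigma\to\abs{c}\,\Hn(\partial\Omega).\]
By the bound above, the left-hand side tends to $0$. Since $\Hn(\partial\Omega)>0$, this forces $c=0$, contradicting $\norma{v}_{L^2(\Omega)}=1$.

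Alternatively, a direct constructive proof is available. Write $v=(v-(v)_\Omega)+(v)_\Omega$. Poincaré--Wirtinger bounds the first piece in $L^2(\Omega)$ by $\norma{\nabla v}_{L^2}$. The trace inequality bounds $\int_{\partial\Omega}\abs{v-(v)_\Omega}\,d\sigma$ by $C\norma{\nabla v}_{L^2}$, and therefore
\[\abs{(v)_\Omega}\,\Hn(\partial\Omega)\le\int_{\partial\Omega}\abs{v}\,d\sigma+C\norma{\nabla v}_{L^2}.\]
Squaring and combining the two estimates yields \eqref{pb} with an explicit constant. I would present this version, since it avoids the subtlety in the limit step entirely.
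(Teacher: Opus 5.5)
Your proposal is correct. Your first argument is the paper's proof: the paper normalises $\norma{v_k}_{L^2(\Omega)}=1$ and states in one line that a subsequence converges to zero in $H^1(\Omega)$. That line is exactly the chain you spell out: Rellich, then $\nabla v_k\to0$ giving strong $H^1$ convergence to a locally constant limit, then continuity of the trace killing the constant.

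Your caution about the trace is not needed. On a bounded Lipschitz set the trace map $H^1(\Omega)\to L^2(\partial\Omega)$ is compact, so weak $H^1$ convergence already gives strong convergence of the traces. Going through strong $H^1$ convergence is of course also fine.

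The version you say you would present is a genuinely different route. You use the orthogonal splitting $\int_\Omega v^2\,dx=\int_\Omega (v-(v)_\Omega)^2\,dx+\abs{\Omega}\,(v)_\Omega^2$ and reduce \eqref{pb} to Poincar\'e--Wirtinger plus the trace inequality. This gives, for instance, $C\le C_P^2+2\abs{\Omega}\,\Hn(\partial\Omega)^{-2}\bigl(1+C_T^2(1+C_P^2)\bigr)$. Here $C_P$ is the Poincar\'e--Wirtinger constant and $C_T$ is the norm of the trace map $H^1(\Omega)\to L^1(\partial\Omega)$.

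What this buys is a transparent dependence of $C$ on the geometry. It does not really avoid compactness, though. Poincar\'e--Wirtinger on a general connected Lipschitz set is usually proved by the same contradiction argument, so the compactness is only moved into a standard lemma.

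It also genuinely needs connectedness, whereas the paper's $\Omega$ is only a bounded Lipschitz open set. So you must work componentwise, as you note. The contradiction argument handles the finitely many components at once: the limit is constant on each component, and the vanishing of $\int_{\partial\Omega}\abs{v_k}\,d\sigma$ forces every constant to be zero, since each component has boundary of positive $\Hn$-measure.
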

\begin{proof}
    Assume by contradiction that \eqref{pb} fails. Then, there exists a sequence $v_k\in H^1(\Omega)$ with 
    \begin{equation}\label{l2=1}\int_\Omega v_k^2\,dx=1\end{equation}
    and
    \[\lim_{k\to\infty}\left[\int_\Omega \abs{\nabla v_k}^2\,dx+\left(\int_{\partial\Omega} \abs{v_k}\right)^2\right]=0.\]
    Hence, up to a subsequence, $v_k$ converges to zero in $H^1(\Omega)$, which is in contradiction with \eqref{l2=1}.
\end{proof}

\begin{teor}
    Fix  $m>0$ and $f\in L^2(\Omega)$. Then, problem \eqref{minE} admits at least a solution $(u_m, h_m) \in  H^1(\Omega)\times\mathcal{H}_m(\partial\Omega)$, where $u_m$ is a minimizer to \eqref{aux2}, and, if $u_m|_{\partial\Omega}\not \equiv0$, 
    \[h_m=m\dfrac{\abs{u_m}}{\displaystyle\int_{\partial\Omega}\abs{u_m}\,d\sigma}.\] Moreover if $\Omega$ is connected $u_m$ is unique.
\end{teor}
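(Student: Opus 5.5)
We reduce problem \eqref{minE} to the auxiliary problem \eqref{aux2} using \autoref{minaux1}, solve \eqref{aux2} by the direct method, and then read off the optimal $h$. Set
\[J(v)=\int_\Omega\abs{\nabla v}^2\,dx+\frac1m\Bigl(\int_{\partial\Omega}\abs{v}\,d\sigma\Bigr)^2-2\int_\Omega fv\,dx .\]

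First we justify the reduction. For fixed $v\in H^1(\Omega)$, \autoref{minaux1} gives
\[\inf_{h\in\mathcal{H}_m(\partial\Omega)}\mathcal{E}(v,h)=J(v),\]
with the infimum attained at $h_v$ when $v|_{\partial\Omega}\not\equiv0$, and by any $h$ otherwise. Hence $\inf\eqref{minE}=\inf_{H^1}J$. Moreover, if $u_m$ minimises $J$, the pair $(u_m,h_{u_m})$ attains the infimum of \eqref{minE}. Conversely, if $(u,h)$ is optimal for \eqref{minE}, then $u$ minimises $J$.

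Next we show that $J$ attains its minimum. For coercivity, Young's inequality and \eqref{pb} give
\[2\Bigl|\int_\Omega fv\Bigr|\le \delta\norma{v}_{L^2}^2+C_\delta\norma{f}_{L^2}^2\le \delta C\Bigl[\norma{\nabla v}_{L^2}^2+\Bigl(\int_{\partial\Omega}\abs{v}\Bigr)^2\Bigr]+C_\delta\norma f_{L^2}^2 .\]
Choosing $\delta$ small, we get $J(v)\ge c\bigl(\norma{\nabla v}^2+(\int_{\partial\Omega}\abs v)^2\bigr)-C$. Using \eqref{pb} once more, any minimising sequence is therefore bounded in $H^1(\Omega)$. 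Extract a subsequence with $v_k\rightharpoonup u_m$ weakly in $H^1$, strongly in $L^2(\Omega)$, and strongly in $L^1(\partial\Omega)$; the last convergence holds because the trace operator is compact into $L^1(\partial\Omega)$, which is true for Lipschitz $\Omega$. Then the Dirichlet term is weakly lower semicontinuous, while the boundary term and the linear term converge. So $u_m$ is a minimiser, and the formula for $h_m$ follows from \autoref{minaux1}.

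Finally, uniqueness when $\Omega$ is connected. The map $v\mapsto(\int_{\partial\Omega}\abs v)^2$ is convex, being the square of a nonnegative convex function. Suppose $u_1,u_2$ are two minimisers. Evaluating $J$ at $(u_1+u_2)/2$ and using strict convexity of $\int\abs{\nabla v}^2$ modulo constants, we get $\nabla u_1=\nabla u_2$. Since $\Omega$ is connected, $u_2=u_1+c$.

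It remains to exclude $c\ne0$, and this is the main obstacle. Let $\phi(t)=J(u_1+t)$. This function is convex and minimised at both $t=0$ and $t=c$, so it is constant on $[0,c]$. Its first and last terms are, respectively, constant and affine in $t$, with slope $-2\int_\Omega f$. Hence $g(t)=(\int_{\partial\Omega}\abs{u_1+t})^2$ must be affine on $[0,c]$. Since $\psi(t)=\int_{\partial\Omega}\abs{u_1+t}$ is convex and nonnegative, $g=\psi^2$ can be affine only if $\psi$ is constant there, or if $\psi$ is itself a square root of an affine function. The latter is concave, so combined with convexity it again forces $\psi$ to be constant. Thus $g$ is constant, and so $\int_\Omega f=0$ and $\psi$ is constant on $[0,c]$.

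Now $\psi'(t)=\abs{\{u_1+t>0\}}-\abs{\{u_1+t<0\}}$, measured on $\partial\Omega$, so $\psi'=0$ on an interval. But as $t$ increases across that interval, the set $\{u_1+t>0\}$ grows and $\{u_1+t<0\}$ shrinks. Constancy of $\psi$ therefore forces the boundary values of $u_1$ to have no mass in a nondegenerate range, with positive and negative parts balanced. I expect this to still permit $\abs{\{u_1\ge -t\}}=\abs{\{u_1\le -t\}}$ on an interval, so the argument needs more information. I would try to use the Euler–Lagrange equation: $-\Delta u_1=f$ in $\Omega$, $\partial_\nu u_1=-\frac1m(\int_{\partial\Omega}\abs{u_1})\,\mathrm{sign}(u_1)$ on $\partial\Omega$. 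Integrating gives $\int f=\frac1m\psi(0)\,(\abs{\{u_1>0\}}-\abs{\{u_1<0\}})$, up to a subgradient on $\{u_1=0\}$. I would combine this with the constancy of $\psi$ to show that the trace of $u_1$ is zero a.e. and then conclude.
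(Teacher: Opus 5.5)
Your reduction to \eqref{aux2} and the existence part are correct, and they are essentially the paper's argument: coercivity from \eqref{pb} plus the direct method. The only difference is that you pass to the limit in the boundary term by compactness of the trace, while the paper uses weak lower semicontinuity of $F$.

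The gap is in uniqueness, which you leave unfinished. You correctly show that two minimisers differ by a constant $c$ (say $c>0$), that $\int_\Omega f\,dx=0$, and that your $\psi$ is constant on $[0,c]$. Equivalently, $\Hn(\{-c<u_1<0\})=0$ and $\Hn(\{u_1\ge0\})=\Hn(\{u_1\le-c\})=\frac12\Hn(\partial\Omega)$. Your proposed way out cannot work. Testing the Euler--Lagrange equation with $\varphi\equiv1$ only gives $\int_{\partial\Omega}s\,d\sigma=0$ for the subgradient $s$ ($s=1$ where $u_1>0$, $s=-1$ where $u_1<0$), which is exactly the balance you already have.

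The paper's proof does not get past this point either. Once $\nabla u_1=\nabla u_2$ and $u_1u_2\ge0$ on $\partial\Omega$, the right-hand side of \eqref{convexityineq} equals $-\frac1{4m}\bigl(\int_{\partial\Omega}\abs{u_1}\,d\sigma-\int_{\partial\Omega}\abs{u_2}\,d\sigma\bigr)^2$. Identifying this with $-\frac1{4m}\bigl(\int_{\partial\Omega}\abs{u_1-u_2}\,d\sigma\bigr)^2$ requires $\abs{u_1}-\abs{u_2}$ to have constant sign, i.e.\ both traces nonnegative a.e.\ or both nonpositive a.e. That is precisely the case you cannot exclude. Done correctly, that step only gives $\int_{\partial\Omega}\abs{u_1}=\int_{\partial\Omega}\abs{u_2}$, which is your constancy of $\psi$.

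In fact this configuration occurs, so uniqueness is false under connectedness of $\Omega$ alone. Take $n=1$, $\Omega=(0,1)$, $f(x)=\cos(\pi x)$. Since $\abs{v(0)}+\abs{v(1)}\ge v(0)-v(1)$, your $J$ satisfies $J\ge Q$, where $Q(v)=\int_0^1 v'^2\,dx+\frac1m(v(0)-v(1))^2-2\int_0^1 fv\,dx$ is convex and invariant under adding constants. Its Euler--Lagrange equation ($-v''=f$, $v'(0)=v'(1)=\frac1m(v(0)-v(1))$) is solved by $u_B(x)=\frac{\cos(\pi x)}{\pi^2}+\frac{2x}{\pi^2(m+1)}+B$. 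Moreover $J(u_B)=Q(u_B)=\min Q$ whenever $u_B(0)\ge0\ge u_B(1)$. Hence every $u_B$ with $B\in\bigl[-\frac1{\pi^2},\frac{m-1}{\pi^2(m+1)}\bigr]$ minimises $J$.

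The same mechanism works for $n\ge2$ whenever $\partial\Omega=\Gamma_1\cup\Gamma_2$ (unions of components) with $\Hn(\Gamma_1)=\Hn(\Gamma_2)$, e.g.\ a ball with suitably many small balls removed. Take $w$ smooth with $w=1+\beta\,d(\cdot,\Gamma_1)$ near $\Gamma_1$, $w=-1-\beta\,d(\cdot,\Gamma_2)$ near $\Gamma_2$, $\beta=2\Hn(\Gamma_1)/m$, and set $f=-\Delta w$. Compare $J$ with the convex functional in which $(\int_{\partial\Omega}\abs{v})^2$ is replaced by $(\int_{\Gamma_1}v-\int_{\Gamma_2}v)^2$; this shows that $w+t$ minimises $J$ for every $t\in[-1,1]$.

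What your argument does prove is uniqueness when $\int_\Omega f\,dx\neq0$. It also gives uniqueness when $n\ge2$ and $\partial\Omega$ is connected. In that case the trace of $\max\{-c,\min\{u_1,0\}\}\in H^1(\Omega)$ would be an $H^{1/2}(\partial\Omega)$ function taking only the values $0$ and $-c$, each on half of $\partial\Omega$, which is impossible. Some such extra hypothesis is needed in the statement.
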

\begin{proof}
    As observed, problem \eqref{minE} is equivalent to \eqref{aux2}. Let 
    \[F(v)=\int_\Omega \abs{\nabla v}^2\,dx+\dfrac{1}{m}\left(\int_{\partial\Omega} \abs{v}\right)^2-2\int_\Omega fv\,dx,\]
    and let $v_k\in H^1(\Omega)$ be a minimising sequence for problem \eqref{aux2}. Without loss of generality, we can assume
    \[F(v_k)\le F(0)=0,\]
    so that, using H\"{o}lder's and Young's inequalities on the source term
    \[2\int_\Omega fv_k\,dx,\]
    we have that, for every $\delta>0$, there exists $C_\delta>0$ such that
    \[\int_\Omega \abs{\nabla v_k}^2\,dx+\dfrac{1}{m}\left(\int_{\partial\Omega} \abs{v_k}\right)^2\le C_\delta+\delta\int_\Omega v_k^2\,dx.\]
    By the Poincarè-type inequality \eqref{pb}, we have that
    \[\int_\Omega \abs{\nabla v_k}^2\,dx+\dfrac{1}{m}\left(\int_{\partial\Omega} \abs{v_k}\right)^2\le C_\delta+\delta C\left[\int_\Omega \abs{\nabla v_k}^2\,dx+\left(\int_{\partial\Omega} \abs{v_k}\right)^2\right],\]
    so that $v_k$ is bounded in $H^1(\Omega)$. Up to a subsequence $v_k$ converges, weakly in $H^1(\Omega)$, to a function $u_m\in H^1(\Omega)$. By the $H^1(\Omega)$-lower semicontinuity of $F$
    we have that $u_m$ is a solution to \eqref{aux2}.\medskip

    Assume $\Omega$ is connected. To prove the uniqueness of the solution, let $u_1,u_2\in H^1(\Omega)$ be solutions to \eqref{aux2}. By direct computations, we have 
    \begin{equation}\label{convexityineq}\begin{split}F\left(\dfrac{u_1+u_2}{2}\right)-\dfrac{F(u_1)+F(u_2)}{2}=&-\displaystyle\int_\Omega \abs*{\dfrac{\nabla u_1-\nabla u_2}{2}}^2\,dx+\dfrac{1}{m}\left(\int_{\partial\Omega}\abs*{\dfrac{u_1+u_2}{2}}\,d\sigma\right)^2+\\[15pt]&-\dfrac{1}{2m}\left(\int_{\partial\Omega}\abs{u_1}\,d\sigma\right)^2-\dfrac{1}{2m}\left(\int_{\partial\Omega}\abs{u_2}\,d\sigma\right)^2.
    \end{split}\end{equation}
    By the convexity of the boundary term
    \[v\mapsto\dfrac{1}{m}\left(\int_{\partial\Omega}\abs{v}\,d\sigma\right)^2,\]
    the right-hand side of \eqref{convexityineq} is strictly negative unless $u_1-u_2$ is constant. On the other hand, by the minimality of $u_1$ and $u_2$, the left-hand side of \eqref{convexityineq} is non-negative. Hence, $u_1=u_2+c$. Similarly, if $u_1$ and $u_2$ have different sign on a subset $\Gamma$ of $\partial\Omega$ with $\Hn(\Gamma)>0$, we have
    \[\abs{u_1+u_1}<\abs{u_1}+\abs{u_2}\]
    on $\Gamma$ and the right-hand side of \eqref{convexityineq} is strictly negative. Hence $u_1$ and $u_2$ have the same sign and
    \[\begin{split}-\dfrac{c^2}{4m}\Hn(\partial\Omega)=&
    -\dfrac{1}{m}\left(\int_{\partial\Omega}\abs*{\dfrac{u_1-u_2}{2}}\,d\sigma\right)^2=
    \dfrac{1}{m}\left(\int_{\partial\Omega}\abs*{\dfrac{u_1+u_2}{2}}\,d\sigma\right)^2+\\[15pt]&-\dfrac{1}{2m}\left(\int_{\partial\Omega}\abs{u_1}\,d\sigma\right)^2-\dfrac{1}{2m}\left(\int_{\partial\Omega}\abs{u_2}\,d\sigma\right)^2, \end{split}\]
    which, by \eqref{convexityineq} and the minimality of $u_1$ and $u_2$, is non-negative,
    so that $c=0$ and $u_1=u_2$.
\end{proof}

Similar optimisation results have been obtained in \cite{BZ97} for the limit problem related to the clamped plate problem, in \cite{DPNST21} for the limit problem for the reinforcement problem with Robin boundary conditions, in \cite{EJEAEMERS24} for the vectorial case, and in \cite{dS25} for the $p$-Laplacian.\medskip

In \cite{ER03}, the authors studied the asymptotic behaviour of the optimal configuration of the insulating material "when $m$ is infinitesimal", proving that "the insulator has to be put in the points where the dispersion is maximal". Namely, let $u_0$ be the unique solution to the Dirichlet boundary value problem
\[\begin{cases}
    -\Delta u_0 = f &\text{in }\Omega,\\[5pt]
    u_0=0 &\text{on }\partial\Omega, 
\end{cases}\]
representing the steady-state temperature in the \emph{uninsulated} setting, and let 
\[K=\Set{\sigma\in\partial\Omega\colon\,\abs{\partial_\nu u_0}(\sigma)= \displaystyle\max_{\partial\Omega} \abs{\partial_\nu u_0}},\]
the set of points where the normal derivative, hence the heat flux, is maximal, then they proved the following theorem
\begin{teor}
    The sequence $h_m/m$ converges weakly-* in the sense of Radon measures, as $m$ goes to zero, to a finite, positive, Radon measure $\lambda$  supported on the set $K$.
\end{teor}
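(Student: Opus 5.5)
Throughout, $h_m = m\abs{u_m}/\int_{\partial\Omega}\abs{u_m}\,d\sigma$ is the optimal density from the previous theorem, with $u_m$ the minimiser of the auxiliary problem \eqref{aux2}. The plan is to rescale and compare $u_m$ with $u_0$. Tightness is immediate: $h_m/m$ is a nonnegative function on $\partial\Omega$ with unit mass, so by Banach--Alaoglu every sequence $m_j\to0$ has a subsequence with $h_{m_j}/m_j\rightharpoonup^*\lambda$, where $\lambda$ is a positive Radon measure on the compact set $\partial\Omega$ with $\lambda(\partial\Omega)=1$. The real content is that $\lambda$ is supported on $K$. To get convergence of the whole family I would also identify $\lambda$ uniquely, but the support statement alone is what I will aim for first.

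\textbf{Step 1: convergence of $u_m$ to $u_0$.} The penalty $\frac1m\bigl(\int_{\partial\Omega}\abs{v}\bigr)^2$ blows up as $m\to0$, so I expect $u_m\to u_0$ strongly in $H^1(\Omega)$. The coercivity argument with the Poincaré inequality \eqref{pb} gives a uniform $H^1$ bound. Comparing with $v=u_0$ gives $F(u_m)\le F(u_0)$, which forces $\int_{\partial\Omega}\abs{u_m}\to0$. So every weak limit lies in $H^1_0$, and lower semicontinuity identifies it as $u_0$. Convergence of energies then upgrades this to strong convergence.

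\textbf{Step 2: the first-order expansion.} Writing the optimality condition for \eqref{aux2},
\[
-\Delta u_m=f \text{ in } \Omega,\qquad \partial_\nu u_m=-\frac{1}{h_m}u_m=-\frac{\int_{\partial\Omega}\abs{u_m}}{m}\,\mathrm{sgn}(u_m) \text{ on }\partial\Omega,
\]
where the sign is replaced by an element of $[-1,1]$ wherever $u_m=0$. So $\partial_\nu u_m=-c_m s_m$ with $c_m=\frac1m\int_{\partial\Omega}\abs{u_m}$ and $\abs{s_m}\le1$. Set $w_m=u_m/m$. Using the Green identity against $u_0$,
\[
\int_{\partial\Omega}\partial_\nu u_0\, u_m\,d\sigma=\int_{\partial\Omega}u_0\,\partial_\nu u_m-\int\ldots,
\]
the boundary trace satisfies $u_m=-h_m\partial_\nu u_m$. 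I would then show that $c_m$ stays bounded and converges to $\max_{\partial\Omega}\abs{\partial_\nu u_0}=:M$. Heuristically, $u_m\approx u_0+m\,\phi$ with $\phi$ solving the Dirichlet problem with boundary datum $-\frac{h_m}{m}\partial_\nu u_0$ at leading order. On the other hand, $\partial_\nu u_m\to\partial_\nu u_0$, while $\abs{\partial_\nu u_m}=c_m$ on $\{u_m\ne0\}$, the region where $h_m>0$. A cleaner route is energetic. Compute
\[
\min F=F(u_0)-m\,M^2+o(m).
\]
For the lower bound, use duality: for any $g$ on $\partial\Omega$ with $\abs{g}\le1$,
\[
\frac1m\Bigl(\int\abs{v}\Bigr)^2\ge \frac2m\,t\int gv-\frac{t^2}{m}\cdot\ldots,
\]
or more simply $\frac1m(\int\abs v)^2\ge 2M\int\abs v-mM^2$. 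This gives $F(v)\ge F_D(v)+2M\int_{\partial\Omega}\abs v-mM^2$, and the Dirichlet-type minimisation of the remaining terms (with the Green identity $\int_\Omega\nabla u_0\nabla v-fv=\int_{\partial\Omega}\partial_\nu u_0\,v$) yields the claimed lower bound. For the upper bound, test with $v=u_0+m\psi$, where $\psi$ concentrates its boundary trace near a maximum point of $\abs{\partial_\nu u_0}$ with sign $-\mathrm{sgn}(\partial_\nu u_0)$.

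\textbf{Step 3: support of $\lambda$.} Inserting $u_m$ into this chain, equality up to $o(m)$ forces
\[
\int_{\partial\Omega}\bigl(M-\abs{\partial_\nu u_0}\bigr)\frac{\abs{u_m}}{m}\,d\sigma=o(1)\cdot\frac{\int_{\partial\Omega}\abs{u_m}}{m},
\]
with $\int_{\partial\Omega}\abs{u_m}/m$ bounded below by a positive constant (since $c_m\to M>0$, assuming $f\not\equiv0$). Because $h_m/m=\abs{u_m}/\int_{\partial\Omega}\abs{u_m}$, this says
\[
\int_{\partial\Omega}\bigl(M-\abs{\partial_\nu u_0}\bigr)\,d\frac{h_m}{m}\to0.
\]
Assuming enough regularity that $\partial_\nu u_0$ is continuous, for instance $\Omega\in C^{1,1}$ and $f\in L^p$ with $p>n$, I can pass to the limit and obtain $\int_{\partial\Omega}(M-\abs{\partial_\nu u_0})\,d\lambda=0$. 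Since the integrand is continuous and nonnegative, $\mathrm{supp}\,\lambda\subset K$.

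The main obstacle is the sharp $o(m)$ matching in Step 2. In particular, the lower-bound argument needs to control the error $\int_{\partial\Omega}(\partial_\nu u_0)u_m$ against $M\int\abs{u_m}$ precisely at order $m$, and this requires the energy identity to be used with $u_m-u_0$ in place of a crude estimate. Convergence of the full family, rather than along subsequences, would require identifying $\lambda$, and I would leave that aside.
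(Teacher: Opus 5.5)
The survey gives no proof of this statement; it only cites \cite{ER03}. Your energetic argument in Steps 2--3 proves the substantive claim on its own: every weak-$*$ cluster point of $h_m/m$ is a probability measure carried by $K$. This needs only the continuity of $\partial_\nu u_0$, which the definition of $K$ already presupposes.

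The ``main obstacle'' you single out is not there, because the lower bound is exact. Set $g=\partial_\nu u_0$, $M=\max_{\partial\Omega}\abs{g}$, $E_0=F(u_0)$ and $t=\int_{\partial\Omega}\abs{v}\,d\sigma$. Green's identity gives $F(v)=E_0+\int_\Omega\abs{\nabla(v-u_0)}^2dx+2\int_{\partial\Omega}gv\,d\sigma+t^2/m$. Hence, for every $v\in H^1(\Omega)$,
\[
F(v)-\bigl(E_0-mM^2\bigr)\ge\int_\Omega\abs{\nabla(v-u_0)}^2\,dx+2\int_{\partial\Omega}\bigl(M-\abs{g}\bigr)\abs{v}\,d\sigma+\frac{1}{m}\bigl(t-mM\bigr)^2 .
\]
The only $o(m)$ sits in the upper bound. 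Your test function $u_0+m\psi$ handles it with a fixed $\psi$: take trace $-\operatorname{sgn}(g(x_0))\rho$, where $\rho\ge0$ has mass $M$ and is supported near some $x_0\in K$ on the set where $\abs{g}\ge M-\delta$. This gives $\min F\le E_0-mM^2+2\delta mM+C_\delta m^2$.

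Now take $v=u_m$. All three nonnegative terms on the right are then $o(m)$, which gives at once:
\begin{itemize}
\item $\int_\Omega\abs{\nabla(u_m-u_0)}^2dx=o(m)$;
\item $c_m=t_m/m\to M$;
\item $\int_{\partial\Omega}(M-\abs{g})\,d(h_m/m)\to0$.
\end{itemize}
So Step~1, the pointwise heuristic on $\partial_\nu u_m$, and the unfinished lines at the start of Step~2 can simply be deleted.

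Compare this with a $\Gamma$-convergence treatment. There one shows that $\frac1m\bigl(\mathcal{E}(m\mu)-E_0\bigr)$ (with $h=m\mu$) $\Gamma$-converges on probability measures to $-\int_{\partial\Omega}\abs{\partial_\nu u_0}^2\,d\mu$, whose minimisers are exactly the probability measures concentrated on $K$. Your route is more elementary and quantitative: it gives rates for $u_m-u_0$ and for $\int_{\partial\Omega}\abs{u_m}$. The $\Gamma$-limit instead gives the whole limit functional, so it also covers almost-minimisers.

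Neither approach identifies $\lambda$. First-order information cannot distinguish between probability measures on $K$, so the statement's ``converges'' must be read along subsequences. The exception is when $K$ is a single point $x_0$: then your argument already gives convergence of the whole family to $\delta_{x_0}$. You were right not to attempt more at this level.

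Finally, $M>0$ does not follow from $f\not\equiv0$; take $f=-\Delta\varphi$ with $\varphi\in C^\infty_c(\Omega)$. The correct hypothesis is $\partial_\nu u_0\not\equiv0$. When $M=0$ the inequality above forces $u_m=u_0$, so $h_m$ is not given by the formula. But then $K=\partial\Omega$ and there is nothing to prove.
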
\medskip

In \cite{BBN17}, the authors proposed the study of the \emph{shape optimisation} problem that arises from \eqref{minE} when $\Omega$ is allowed to vary. Namely, for a bounded open (sufficiently smooth) set $\Omega$ denote by $E(\Omega)$ the minimum of problem \eqref{minE} on $\Omega$, and consider the problem 
\begin{equation}\label{minEshape}\min_{\abs{\Omega}\le V_0}E(\Omega).\end{equation}
Moreover, when $f\equiv1$, in analogy with the famous Saint-Venant inequality, the authors conjectured that the solution to problem \eqref{minEshape} is the ball.

A partial solution to the problem has been proven in \cite{DLW21}. Namely, the authors proved the existence of a solution for problem \eqref{minEshape} in the class
of $M$-uniform domains, contained in a ball, of given measure and equi-bounded perimeter. If the source term $f$ is non-negative and in $L^n(\R^n)$, the authors also proved existence in the weaker setting in which one identifies the set $\Omega$ with the support of an $\sbv$ function. 

Finally, the case $f\equiv1$ has been studied in \cite{DPNST21}, proving that the solution to \eqref{minEshape} is indeed the ball.

\subsection{Spectral optimisation}
Let $\Omega\subset\R^n$ be a bounded Lipschitz set, and denote by $\lambda(h)$ the principal eigenvalue of problem \eqref{modeig} on $\Omega$. We have the following variational characterisation
\[\lambda(h)=\min_{v\in H^1(\Omega)\setminus\set{0}}\dfrac{\displaystyle\int_\Omega \abs{\nabla v}^2\,dx+\int_{\partial\Omega} \dfrac{v^2}{h}\,d\Hn}{\displaystyle\int_\Omega v^2\,dx}.\]
We are interested in the minimisation problem 
\begin{equation}\label{minEig}
    \lambda_m(\Omega):=\min\Set{\lambda(h)\colon\,h\in \mathcal{H}_m},
\end{equation}
studied in \cite{BBN17}, we also refer the reader to \cite{DPO25} for the minimisation of the first eigenvalue of the reinforcement problem with Robin boundary conditions, and to \cite{dS25} for the case of the $p$-Laplacian.\medskip

Let 
\[J_m(v)=\dfrac{\displaystyle\int_\Omega \abs{\nabla v}^2\,dx+\dfrac{1}{m}\left(\int_{\partial\Omega} \abs{v}\,dx\right)^2}{\displaystyle\int_\Omega v^2\,d\Hn}.\]
Arguing as for problem \eqref{minE}, we have that problem \eqref{minEig} is equivalent to the auxiliary problem
\begin{equation}\label{minEigaux}
    \min_{v\in H^1(\Omega)\setminus\set{0}} J_m(v),
\end{equation}
which, by the Poincaré-type inequality \eqref{pb} and the direct methods of the calculus of variations,  admits a non-negative solution $\bar{u}_m$. We have the following theorem
\begin{teor}
    Let $\bar{u}_m$ be a solution to problem \eqref{minEigaux} and let 
    \[\bar{h}_m=m\dfrac{\bar{u}_m}{\displaystyle\int_{\partial\Omega}\bar{u}_m\,d\Hn}.\]
    Then $\bar{h}_m$ is a solution to problem \eqref{minEig}, and the couple $(\bar{u}_m,\bar{h}_m)$ is a solution to \eqref{modeig}
\end{teor}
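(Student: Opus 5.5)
The plan is to compare the two minimisation problems through the chain of inequalities given by \autoref{minaux1}, and then read off the Euler--Lagrange equation of \eqref{minEigaux} at $\bar{u}_m$.

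\textbf{Step 1: $\bar{h}_m$ attains $\lambda_m(\Omega)$.} For every $h\in\mathcal{H}_m(\partial\Omega)$ and every $v\in H^1(\Omega)\setminus\set{0}$, the Hölder inequality in the proof of \autoref{minaux1} gives
\[\int_{\partial\Omega}\frac{v^2}{h}\,d\Hn\ge\frac1m\left(\int_{\partial\Omega}\abs{v}\,d\Hn\right)^2 .\]
Hence the Rayleigh quotient of $v$ with weight $1/h$ is at least $J_m(v)\ge J_m(\bar{u}_m)$. Taking the minimum over $v$ gives $\lambda(h)\ge J_m(\bar{u}_m)$ for every admissible $h$.

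Conversely, I would first check that $\bar{u}_m|_{\partial\Omega}\not\equiv0$. Suppose $\bar{u}_m$ vanished on $\partial\Omega$. Then $\bar{u}_m\in H^1_0(\Omega)$ and $J_m(\bar{u}_m)\ge\lambda_1^D(\Omega)$. Testing $J_m$ with a constant $v\equiv1$ gives $J_m(1)=\Hn(\partial\Omega)^2/(m\abs{\Omega})$, which may exceed $\lambda_1^D$ when $m$ is small, so that comparison alone does not settle it. Instead I would argue by first variation. For $\varphi\ge0$ smooth and $t>0$ small, the quotient $J_m(\bar{u}_m+t\varphi)$ has numerator derivative at $t=0^+$ equal to $2\int_\Omega\nabla\bar{u}_m\cdot\nabla\varphi\,dx$, because the boundary term is $O(t^2)$. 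Using Dirichlet minimality, $-\Delta\bar{u}_m=J_m\bar{u}_m$ in $\Omega$. This would give $-\int_{\partial\Omega}\partial_\nu\bar{u}_m\,\varphi\,d\Hn\ge0$ for all such $\varphi$. That contradicts Hopf's lemma, which forces $\partial_\nu\bar{u}_m<0$ on the boundary.

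Once $\int_{\partial\Omega}\bar{u}_m\,d\Hn>0$ is known, $\bar{h}_m\in\mathcal{H}_m(\partial\Omega)$, and \autoref{minaux1} gives equality in the inequality above for $v=\bar{u}_m$, $h=\bar{h}_m$. Hence
\[\lambda(\bar{h}_m)\le J_m(\bar{u}_m)\le\lambda(h)\quad\text{for all }h\in\mathcal{H}_m(\partial\Omega),\]
and in particular $\lambda(\bar{h}_m)=J_m(\bar{u}_m)$.

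\textbf{Step 2: $(\bar{u}_m,\bar{h}_m)$ solves \eqref{modeig}.} By Step 1, $\bar{u}_m$ minimises the Rayleigh quotient with weight $1/\bar{h}_m$. On $\set{\bar{h}_m=0}$ the integrand $\bar{u}_m^2/\bar{h}_m$ is interpreted as $0$, since $\bar{u}_m=0$ there. Taking variations $\bar{u}_m+t\varphi$ on the set $\set{\bar{u}_m>0}\cap\partial\Omega$, the boundary term $\frac1m(\int\abs{v})^2$ has derivative
\[\frac2m\int_{\partial\Omega}\bar{u}_m\,d\Hn\int_{\partial\Omega}\operatorname{sign}(\bar{u}_m)\,\varphi\,d\Hn .\]
Since $\bar{u}_m/\bar{h}_m=\frac1m\int_{\partial\Omega}\bar{u}_m\,d\Hn$, this is exactly the weak form of $\partial_\nu\bar{u}_m+\bar{u}_m/\bar{h}_m=0$, with eigenvalue $\lambda=J_m(\bar{u}_m)$.

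\textbf{Main obstacle.} The delicate point is the set where $\bar{u}_m$ vanishes on $\partial\Omega$, and with it where $\bar{h}_m=0$. There $J_m$ is not differentiable, and the boundary condition must be read as a one-sided (subdifferential) inequality. I would handle this by showing $\bar{u}_m>0$ on $\partial\Omega$ via the strong maximum principle, and otherwise by interpreting \eqref{modeig} there as a Dirichlet condition, $1/h=+\infty$. This is consistent with the one-sided variational inequality.
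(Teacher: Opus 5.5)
Your overall route is the reduction the paper indicates ("arguing as for \eqref{minE}"): the H\"older bound of \autoref{minaux1} gives $\lambda(h)\ge J_m(\bar u_m)$ for every $h\in\mathcal{H}_m(\partial\Omega)$, equality holds at $h=\bar h_m$, and then you read off the Euler--Lagrange equation. You also correctly isolate the one nontrivial point, $\bar u_m|_{\partial\Omega}\not\equiv0$, without which $\bar h_m$ is undefined. However, your proof of that point does not close as written.

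\textbf{The sign is reversed.} If $\bar u_m\in H^1_0(\Omega)$ minimises $J_m$, it is a first Dirichlet eigenfunction. Nonnegativity of the right derivative of $t\mapsto J_m(\bar u_m+t\varphi)$ at $t=0$, combined with Green's formula, then gives $\int_{\partial\Omega}\partial_\nu\bar u_m\,\varphi\,d\Hn\ge0$ for $\varphi\ge0$. The inequality you wrote, $-\int_{\partial\Omega}\partial_\nu\bar u_m\,\varphi\,d\Hn\ge0$, is exactly what Hopf's lemma predicts, so no contradiction follows.

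\textbf{Hopf is not available here.} Hopf's lemma needs an interior ball condition, which the bounded Lipschitz $\Omega$ of this section need not satisfy.

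Both problems disappear if you simply take $\varphi\equiv1$. The trace of $\bar u_m+t$ is $t$, so
\[
J_m(\bar u_m+t)=\frac{\int_\Omega\abs{\nabla\bar u_m}^2\,dx+\frac{t^2}{m}\Hn(\partial\Omega)^2}{\int_\Omega\bar u_m^2\,dx+2t\int_\Omega\bar u_m\,dx+t^2\abs{\Omega}}.
\]
Since $\int_\Omega\bar u_m\,dx>0$, the denominator grows at first order in $t$ while the numerator grows only at second order. Hence $J_m(\bar u_m+t)<J_m(\bar u_m)$ for small $t>0$, contradicting minimality. Alternatively, as in the proof of \autoref{sym-break}, you can use $\lambda_m(\Omega)<\lambda^D(\Omega)$ from \autoref{proplambdam}.

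\textbf{The boundary zero set cannot be avoided.} Your suggestion of proving $\bar u_m>0$ on $\partial\Omega$ by the strong maximum principle is wrong. The maximum principle only gives positivity inside $\Omega$. \autoref{sym-break2}, as well as part ii) of \autoref{sym-break}, shows that for small $m$ every minimiser vanishes on a subset of $\partial\Omega$ of positive $\Hn$-measure. So the Dirichlet reading of \eqref{modeig} on $\set{\bar h_m=0}$ is forced, not a fallback.

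\textbf{A cleaner way to get the equation.} Use the fact you already noted, that $\bar u_m$ minimises the Rayleigh quotient with weight $1/\bar h_m$, and vary that quotient rather than $J_m$. Its numerator is a quadratic form on the linear space $V=\Set{v\in H^1(\Omega)\colon \int_{\partial\Omega}v^2/\bar h_m\,d\Hn<\infty}$, whose elements vanish a.e.\ on $\set{\bar h_m=0}$. The first variation gives
\[
\int_\Omega\nabla\bar u_m\cdot\nabla\varphi\,dx+\int_{\partial\Omega}\frac{\bar u_m\varphi}{\bar h_m}\,d\Hn=\lambda(\bar h_m)\int_\Omega\bar u_m\varphi\,dx\quad\text{for every }\varphi\in V.
\]
This is the weak form of \eqref{modeig} with $h=\bar h_m$, and it needs no sign function or one-sided derivative of $J_m$.
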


We have the following properties for the minimum eigenvalue as a function of $m>0$ 
\begin{prop}\label{proplambdam}
    Let $\Omega\subset\R^n$ be a bounded Lipschitz set. Then the function $m\in(0,+\infty)\mapsto\lambda_m(\Omega)$ is continuos and strictly decreasing, moreover 
    \[\lim_{m\to+\infty}\lambda_m(\Omega)=0,\]
    \[\lim_{m\to0^+}\lambda_m(\Omega)=\lambda^D(\Omega),\]
    the first Dirichlet eigenvalue of $\Omega$.
\end{prop}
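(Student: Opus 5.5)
We work with the auxiliary formulation: $\lambda_m(\Omega)=\min_{v\in H^1(\Omega)\setminus\{0\}}J_m(v)$, where
\[J_m(v)=\dfrac{\int_\Omega\abs{\nabla v}^2\,dx+\frac1m\left(\int_{\partial\Omega}\abs{v}\,d\Hn\right)^2}{\int_\Omega v^2\,dx},\]
as established before the statement. The key observation is that, for each fixed $v$, the map $m\mapsto J_m(v)$ is non-increasing. Moreover, it is affine in $1/m$. Hence $\lambda_m$ is an infimum of functions that are non-increasing in $m$ and affine in $t=1/m$. It follows that $\lambda_m$ is non-increasing, and that $t\mapsto\lambda_{1/t}$ is concave on $(0,+\infty)$, being an infimum of affine functions. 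It is also finite, since $0\le\lambda_m\le J_m(1)$. A finite concave function on an open interval is continuous, and this gives continuity in $m$.

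\textbf{Strict monotonicity.} Let $m_1<m_2$ and let $\bar u_{m_2}\ge0$ be a minimiser for $m_2$.
\begin{itemize}
\item If $\int_{\partial\Omega}\bar u_{m_2}>0$, then $\lambda_{m_1}\le J_{m_1}(\bar u_{m_2})$ fails to give strictness directly, so we argue the other way: with $\bar u_{m_1}$ a minimiser for $m_1$, we get $\lambda_{m_2}\le J_{m_2}(\bar u_{m_1})<J_{m_1}(\bar u_{m_1})=\lambda_{m_1}$. The strict inequality holds provided the trace of $\bar u_{m_1}$ is not identically zero.
\item If instead $\bar u_{m_1}$ had zero trace, then $\bar u_{m_1}\in H^1_0(\Omega)$ and $\lambda_{m_1}\ge\lambda^D(\Omega)$. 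We rule this out by showing $\lambda_m<\lambda^D$ for every $m$. Take the Dirichlet eigenfunction $\phi$ and perturb it to $\phi+s$ with $s>0$ small. Then $\int_\Omega(\phi+s)^2=1+2s\int_\Omega\phi+O(s^2)$, while the boundary term is $\frac1m s^2\Hn(\partial\Omega)^2=O(s^2)$. Therefore $J_m(\phi+s)=\lambda^D(1-2s\int_\Omega\phi)+O(s^2)<\lambda^D$ for $s$ small.
\end{itemize}
This perturbation is the delicate step. It gives both $\lambda_m<\lambda^D$ and the strictness of the monotonicity.

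\textbf{Limits.}
\begin{itemize}
\item As $m\to+\infty$: $\lambda_m\le J_m(1)=\Hn(\partial\Omega)^2/(m\abs{\Omega})\to0$.
\item As $m\to0^+$: we already have $\lambda_m\le\lambda^D$, and the limit $L=\lim_{m\to0}\lambda_m\le\lambda^D$ exists by monotonicity. For the reverse inequality, take minimisers $u_m$ normalised by $\int_\Omega u_m^2=1$. Then $\int_\Omega\abs{\nabla u_m}^2\le\lambda^D$ and $\left(\int_{\partial\Omega}\abs{u_m}\right)^2\le m\lambda^D\to0$. Along a subsequence, $u_m\rightharpoonup u$ weakly in $H^1$ and strongly in $L^2(\Omega)$ and in $L^1(\partial\Omega)$, by compactness of the trace on Lipschitz domains. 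Hence $\int_\Omega u^2=1$ and $u$ has zero trace, so $u\in H^1_0(\Omega)$. Lower semicontinuity then gives $\lambda^D\le\int_\Omega\abs{\nabla u}^2\le\liminf_{m\to0}\lambda_m=L$.
\end{itemize}
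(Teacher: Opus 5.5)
The paper states this proposition without proof, so there is no in-paper argument to compare against. Judged on its own, your proof is correct and complete.

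Setting $t=1/m$, the map $t\mapsto\lambda_{1/t}(\Omega)$ is an infimum of affine functions $t\mapsto a_v+t\,b_v$ with $a_v,b_v\ge0$. That gives monotonicity and continuity at once.

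The perturbation $\phi+s$ of the Dirichlet eigenfunction gives the strict bound $\lambda_m(\Omega)<\lambda^D(\Omega)$ for every $m>0$. This forces every minimiser to have nonzero trace, and hence gives strict monotonicity. The paper uses this same bound without justification in the proof of Theorem~\ref{sym-break}. There, together with continuity, strict monotonicity and the two limits you prove, it is what makes $m_0$ well defined.

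Three small cleanups:
\begin{itemize}
\item The case distinction on $\bar u_{m_2}$ in your first bullet is vestigial. Your argument only uses a minimiser $\bar u_{m_1}$ and the fact that its trace is not identically zero.
\item You should explicitly take $\phi\ge0$, replacing $\phi$ by $\abs{\phi}$ if necessary. Your expansion gives $J_m(\phi+s)<\lambda^D$ only when $\int_\Omega\phi\neq0$. If $\Omega$ is disconnected and two components share the first Dirichlet eigenvalue, a sign-changing first eigenfunction with zero mean exists.
\item In the limit $m\to0^+$, compactness of the trace is true on Lipschitz domains but not needed. Weak continuity of the trace operator $H^1(\Omega)\to L^2(\partial\Omega)$, together with $\int_{\partial\Omega}\abs{u_m}\,d\Hn\to0$, already shows that $u$ has zero trace. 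The Lipschitz assumption is genuinely used in the next step, where zero-trace functions are identified with $H^1_0(\Omega)$.
\end{itemize}
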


\subsubsection*{Symmetry-breaking}

When $\Omega$ is a ball, an interesting phenomenon has been observed in \cite{BBN17} (see also \cite{DPO25}), namely minimisers of $J_m$ are not always radial functions, hence optimisers $\bar{h}_m$ to problem \eqref{minEig} can be non-constant (see \autoref{fig4}). This surprising phenomenon is called \emph{symmetry-breaking} and is contained in the following result.

\begin{figure}
\centering
\begin{tabular}{@{}c@{}}

    \includegraphics[height=0.5\linewidth]{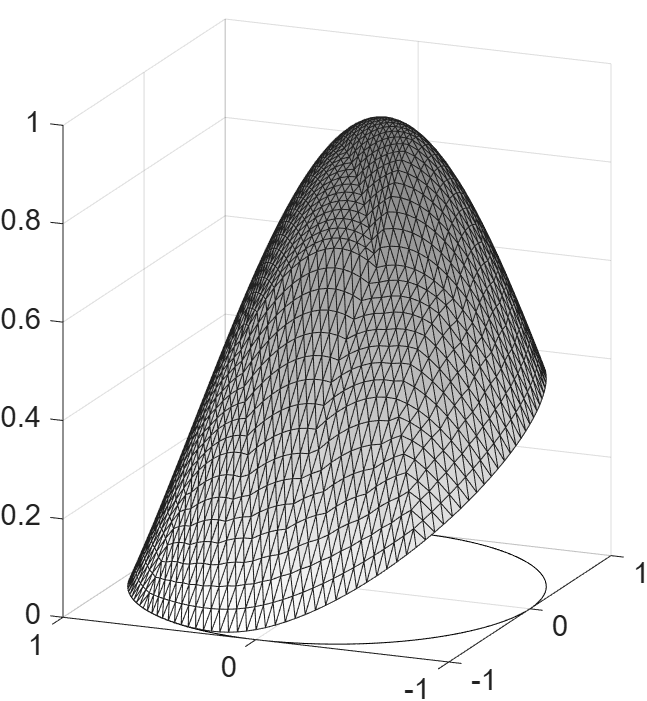}
\end{tabular}\qquad
\begin{tabular}{@{}c@{}}
    \centering
        \includegraphics[height=0.4\linewidth]{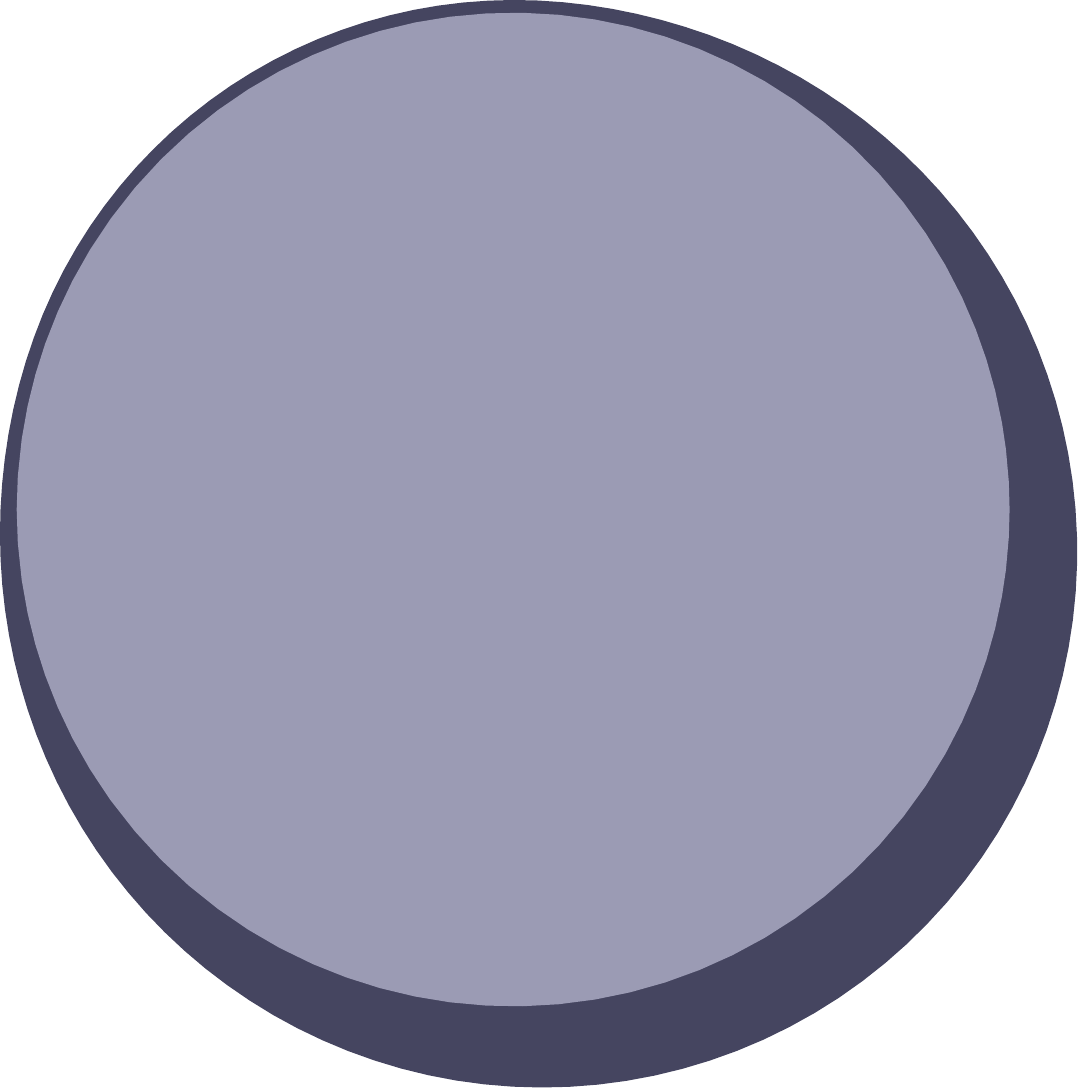}
\end{tabular}
 \caption{Example of non-radial solution to \eqref{minEigaux} and corresponding configuration of insulating material.}
    \label{fig4}
\end{figure}

\begin{teor}\label{sym-break}
    Let $\Omega=B_R$ be a ball of radius $R>0$ in $\R^n$ with $n\ge2$. There exists $m_0>0$ such that the solutions to the auxiliary minimisation problem \eqref{minEigaux} are
    \begin{itemize}
        \item[i)] radial, if $m>m_0$,
        \item[ii)] non-radial, if $0<m<m_0$.
    \end{itemize}
    In particular $\bar{h}_m$ is non-constant if $m<m_0$.
\end{teor}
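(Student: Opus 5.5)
We study the auxiliary functional $J_m$ on $B_R$ and compare its infimum over radial functions with its infimum over all of $H^1(B_R)$. For radial competitors $v(x)=\phi(\abs{x})$, the boundary term is constant on $\partial B_R$. The Cauchy--Schwarz step in \autoref{minaux1} is then an equality: $\frac1m(\int_{\partial B_R}\abs{v})^2=\frac{P}{m}\,\phi(R)^2$ with $P=\Hn(\partial B_R)$. Hence the radial minimum of $J_m$ is the first Robin eigenvalue $\lambda^{\text{rad}}_m$ with Robin parameter $\beta=P/m$, i.e. with constant $h=m/P$. Its eigenfunction $u_\beta=J_{0}$-type Bessel profile $r^{1-n/2}J_{n/2-1}(\sqrt{\lambda}r)$ is explicit. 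The plan is to show that this radial candidate is a local (indeed global) minimiser for $m$ large and is beaten by a non-radial competitor for $m$ small. We then define $m_0$ through monotonicity.

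\textbf{Step 1 (small $m$, symmetry breaking).} As $m\to0$, the Robin parameter $P/m\to\infty$. Thus $\lambda^{\text{rad}}_m\to\lambda^D(B_R)$, and the radial eigenfunction tends to the Dirichlet one, with boundary value $\phi(R)\sim \frac{m}{P}\abs{\phi'(R)}$. We produce a competitor with strictly smaller $J_m$ by letting the boundary trace concentrate. The idea is that $J_m$ penalises only $(\int\abs v)^2$, i.e. an $L^1$ norm of the trace, rather than an $L^2$ norm. Hence it is cheaper to have $v\neq0$ on a small cap of $\partial B_R$ than uniformly.

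Concretely, we compute the second variation of $J_m$ at the radial minimiser $u$ in the direction $w=\psi(r)Y_1(\theta)$, where $Y_1$ is a first spherical harmonic. Since $\int_{\partial B_R} Y_1=0$ and $u>0$ on $\partial B_R$, the boundary term $(\int\abs{u+tw})^2$ has vanishing second-order contribution from $w$. Indeed, $\abs{u+tw}=u+tw$ near $t=0$, so the integral is linear in $t$ and contributes $0$ to first order in $t$.

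The quadratic form then reduces to $\int\abs{\nabla w}^2-\lambda^{\text{rad}}_m\int w^2$, with free (Neumann-type) boundary condition for $w$. Minimising over $\psi$ gives the first Neumann eigenvalue in the $\ell=1$ mode, namely $\mu_2^N(B_R)$, the first nonzero Neumann eigenvalue. Therefore the radial function is unstable whenever $\lambda^{\text{rad}}_m>\mu_2^N(B_R)$. Since $\lambda^D(B_R)>\mu_2^N(B_R)$ (a classical fact, from $j_{n/2-1,1}>j'_{n/2,1}$), this holds for all small $m$. Hence minimisers are non-radial, and by the formula for $\bar h_m$ the optimal density is non-constant.

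\textbf{Step 2 (large $m$, radiality).} For $m$ large, $\lambda_m$ is small by \autoref{proplambdam}. We show that any minimiser $\bar u_m\ge0$ is radial. Write $\bar u_m=c+w$ with $c$ the boundary mean. The Euler--Lagrange equation is
\[-\Delta \bar u=\lambda\bar u,\qquad \partial_\nu\bar u=-\tfrac1m\int_{\partial B_R}\bar u \ \text{ on }\{\bar u>0\}\cap\partial B_R,\]
with a subgradient inclusion where $\bar u=0$. The main obstacle is to exclude zeros of the trace. We would prove $\bar u>0$ on $\partial B_R$ for $\lambda_m<\mu_2^N$. Testing against the non-radial spherical-harmonic components shows that each component $w_\ell$ solves a Neumann-type problem. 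That problem forces $w_\ell=0$ whenever $\lambda<\mu_2^N$, by the Neumann Rayleigh quotient on mean-zero functions. Then the boundary derivative is constant, the trace has constant sign, and uniqueness of the Robin ground state gives radiality.

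\textbf{Step 3 (threshold).} Define $m_0=\sup\{m:\lambda^{\text{rad}}_m>\lambda_m\}$. By strict monotonicity and continuity of $m\mapsto\lambda_m$ and of $\lambda^{\text{rad}}_m$ (\autoref{proplambdam}), the set where radial functions are non-optimal is an interval $(0,m_0)$. We expect $m_0$ to be characterised by $\lambda^{\text{rad}}_{m_0}=\mu_2^N(B_R)$. Proving that the two regimes meet exactly at this threshold, i.e. that instability and global non-optimality coincide, is the delicate part; Steps 1 and 2 handle it on either side of $\mu_2^N$.
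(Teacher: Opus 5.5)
Your Step 1 is correct and uses the same mechanism as the first half of the paper's proof. The radial candidate is positive on $\partial B_R$, and the Neumann eigenfunction $\bar v$ satisfies $\int_{\partial B_R}\bar v=0$ and $\int_{B_R}u\bar v=0$. Hence the competitor $u+t\bar v$ lowers $J_m$ as soon as $\lambda^{\text{rad}}_m>\mu(B_R)$.

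The genuine gap is in Step 2. You claim that each non-radial spherical-harmonic component $w_\ell$ of $\bar u$ solves a homogeneous Neumann problem. That needs $\partial_\nu\bar u$ to be constant on $\partial B_R$, which is only known on $\{\bar u>0\}$. The optimality condition is $\partial_\nu\bar u=-Ks$, where $K=\frac1m\int_{\partial B_R}\bar u$, $s=1$ on the positivity set, and only $0\le s\le1$ on the zero set $Z$ of the trace. So the $\ell\ge1$ components of $\partial_\nu\bar u$ need not vanish. Testing with $w_\ell$ only gives $\int_{B_R}\abs{\nabla w_\ell}^2-\lambda_m\int_{B_R}w_\ell^2=-K\int_{\partial B_R}s\,w_\ell$, and this has no controlled sign mode by mode. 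You use this very step to exclude zeros of the trace, so the argument is circular. Excluding a trace that vanishes on a set of positive measure is exactly what (i) requires.

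The paper gets around this by cap (spherical) symmetrisation. For a cap-symmetric minimiser, $-\partial_\nu\bar u$ is nonincreasing in the angle from $\theta_0$: it equals $K$ where $\bar u>0$ and is at most $K$ on $Z$. Testing with the equally symmetrised Neumann eigenfunction gives $(\mu-\lambda_m)\int_{B_R}\bar u\bar v=\int_{\partial B_R}\bar v\,\partial_\nu\bar u\le0$, while $\int_{B_R}\bar u\bar v>0$, hence $\lambda_m\ge\mu$.

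A repair closer to your decomposition is to treat the whole non-radial part $w=\bar u-\tilde u$ at once, where $\tilde u$ is the spherical average of a nonnegative minimiser. The cross terms vanish, and $\int_{\partial B_R}\abs{\bar u}=\int_{\partial B_R}\tilde u$. Set $\mathcal N(v)=\int_{B_R}\abs{\nabla v}^2+\frac1m(\int_{\partial B_R}\abs{v})^2$. Then $0=\bigl[\mathcal N(\tilde u)-\lambda_m\int_{B_R}\tilde u^2\bigr]+\bigl[\int_{B_R}\abs{\nabla w}^2-\lambda_m\int_{B_R}w^2\bigr]\ge(\mu-\lambda_m)\int_{B_R}w^2$, since $w$ has zero mean. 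This forces $w=0$ whenever $\lambda_m<\mu$. The sign information your mode-by-mode test misses is that $w=-\tilde u(R)\le0$ on $Z$.

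Finally, Step 3 does not hold as stated: strict monotonicity of two functions of $m$ does not make $\{\lambda^{\text{rad}}_m>\lambda_m\}$ an interval. It is also unnecessary. Define $m_0$ by $\lambda_{m_0}(B_R)=\mu(B_R)$, as the paper does. For $m<m_0$ you have $\lambda^{\text{rad}}_m\ge\lambda_m>\mu$, so Step 1 applies. For $m>m_0$ you have $\lambda_m<\mu$, so the repaired Step 2 applies.
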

\begin{proof}
    Let $\mu(B_R)$ be the principal eigenvalue of the Neumann problem on $B_R$ an let $\bar{v}$ be an associated $L^2(B_R)$-normalised eigenfunction, then $0<\mu(B_R)<\lambda^D(B_R)$ hence, by proposition \autoref{proplambdam}, there exists a unique $m>0$, such that
    \[\lambda_m(B_R)=\mu(B_R).\]
    Let $m_0$ be such a value. For every $m>0$, let $\bar{u}_m$ be a non-negative solution to \eqref{minEigaux} with normalised $L^2$-norm. Notice that, by the minimum principle, $\bar{u}_m$ can only vanish on the boundary of $B_R$ and that the trace of $\bar{u}_m$ cannot be identically equal to zero as $\lambda_m(B_R) < \lambda^D(B_R)$. In particular, if $u_m$ is radial, then it is strictly positive. Conversely, if $m\ne m_0$, the only positive solutions are the radially symmetric ones. Indeed, a solution to \eqref{minEigaux} is strictly positive if and only if it is a strictly positive solution to
    \begin{equation}\label{u>0}\begin{cases}-\Delta u=\lambda_m(B_R)u &\text{in }B_R,\\[7pt]
    \partial_\nu u = -\dfrac{1}{m}\displaystyle\int_{\partial B_R}u\,d\Hn&\text{on }\partial B_R.\end{cases}
     \end{equation}
     In particular, given a positive solution to \eqref{u>0}, we can construct a positive and radial solution by considering the average along the angular coordinates. On the other hand, any two positive solutions to \eqref{u>0} can be combined linearly to obtain a solution to the homogeneous  Neumann eigenvalue problem, so that positive solutions to \eqref{u>0} are unique up to a multiplicative constant.  \medskip
    
    We begin by proving that a strictly positive solution can exist only if $m\ge m_0$, so that, in particular, if $m<m_0$, no radial solution exists.  Let $m\ne m_0$ so that $\lambda_m(B_R)\ne\mu(B_R)$, and assume that $\bar{u}_m$ is strictly positive, than it satisfies 

    \[\int_{B_R} \nabla \bar{u}_m\nabla\varphi\,dx+\dfrac{1}{m}\int_{\partial B_R}\bar{u}_m\,d\Hn\int_{\partial B_R}\varphi\,d\Hn=\lambda_m(B_R)\int_{B_R} \bar{u}_m\varphi\,dx,\]
    for every $\varphi\in H^1(B_R)$. We have that $\bar{u}_m$ and the Neumann eigenfunction $\bar{v}$ are orthogonal, indeed
    \[(\mu(B_R)-\lambda_m(B_R))\int_{B_R} \bar{u}_m\bar{v}\,dx=-\dfrac{1}{m}\int_{\partial B_R} \bar{u}_m\,d\Hn\int_{\partial B_R} \bar{v}\,d\Hn=0,\]
    where we used that, since $B_R$ is a ball
    \[\int_{\partial B_R}\bar{v}\,d\Hn=0.\]
    As $\bar{u}_m$ is strictly positive the function $\varphi_\delta = \bar{u}_m+\delta \bar{v}$ is strictly positive for $\delta$ sufficiently small, so that using $\varphi_\delta$ as a test function for $\lambda_m(B_R)$ and using the orthogonality of $\bar{u}_m$ and $\bar{v}$, we have 
    \[\lambda_m(B_R)\le J_m(\varphi_\delta)=\dfrac{\lambda_m(B_R)+\delta^2\mu(B_R)}{1+\delta^2},\]
    that is
    \[\lambda_m(B_R)\le \mu(B_R)\]
    and, by monotonicity, $m>m_0$.\medskip

    We now need to prove that non-radial solutions only exist if $m\le m_0$. Let $m\ne m_0$ and assume that a non-radial solution exists. Let $\bar{u}_m(r,\theta)$ be such a solution in polar coordinates. By spherical symmetrisation (see \cite{S81, D99}) we can assume that $\bar{u}_m$ is \emph{spherically symmetric} in a direction $\theta_0\in\mathbb{S}^{n-1}$, that is
    \[\bar{u}_m(r,\theta_1)\ge\bar{u}_m(r,\theta_2)\quad\text{for every }0<r<R\,\text{ and }\abs{\theta_1-\theta_0}\le\abs{\theta_2-\theta_0}.\]
    Indeed, by the properties of the rearrangement, the integral of the gradient decreases while the $L^2$-norm and the boundary integral remain the same. Moreover $\bar{u}_m$ satisfies
\begin{equation}\label{u>=0}\begin{cases}-\Delta \bar{u}_m=\lambda_m(B_R)\bar{u}_m &\text{in }B_R,\\[7pt]
    \partial_\nu \bar{u}_m = -\dfrac{1}{m}\displaystyle\int_{\partial B_R}\bar{u}_m\,d\Hn&\text{on }\partial B_R\cap\set{\bar{u}_m>0},\\[10pt]
    \partial_\nu \bar{u}_m \ge -\dfrac{1}{m}\displaystyle\int_{\partial B_R}\bar{u}_m\,d\Hn&\text{on }\partial B_R\cap\set{\bar{u}_m=0}.\end{cases}
     \end{equation}

     Multiplying equation \eqref{u>=0} by the Neumann eigenfunction $\bar{v}$ and integrating by parts we have
     \begin{equation}\label{last}
         (\mu(B_R)-\lambda_m(B_R))\int_{B_R} \bar{u}_m\bar{v}\,dx=\int_{\partial B_R}\partial_\nu \bar{u}_m\bar{v}\,d\Hn.
     \end{equation}
     Without loss of generality we can also assume that $\bar{v}$ is spherically symmetric in the direction $\theta_0$, as a consequence $\bar{v}$ is antisymmetric in the direction orthogonal to $\theta_0$ and, as $\bar{u}_m$ is spherically symmetric in direction $\theta_0$ and non-constant on $\partial B_R$ 
     \[\int_{B_R} \bar{u}_m\bar{v}\,dx>0. \]
     Finally, by the symmetry of $\bar{u}_m$ and the boundary conditions in \eqref{u>=0}, we can deduce that also $-\partial_\nu \bar{u}_m$ is spherically symmetric in direction $\theta_0$, hence the right hand side of \eqref{last} is non positive and $\lambda_m(B_R)\ge\mu(B_R)$, that is $m<m_0$. 
\end{proof}
In \cite{HLL22} (see also \cite{HLL24}) the exact value of $m_0$ was computed to be
\[m_0=\dfrac{n-1}{n}\dfrac{P(B_R)^2}{\abs{B_R}\,\mu(B_R)},\]
where $P(B_R)$ is the perimeter of $B_R$.\medskip

In \cite{BBN17}, the authors observed that, in general, the shape optimisation problem
\[\min\Set{\lambda_m(\Omega)\colon\,\abs{\Omega}= V_0}\]
is ill-posed if the dimension, $n$, is larger or equal than $3$. Indeed, if $\Omega$ is a ball of radius $R$, using the constant as a test function, we have
\[\lambda_m(B_R)\le J_m(B_R)=\frac{n^2 \omega_n  }{m}R^{n-2},\]
where $\omega_n$ is the volume of the unit ball. Hence, if $n\ge3$, $\lambda_m(B_R)$ goes to zero as $R$ goes to zero. Then, if $\Omega_k$ is the disjoint union of $k$ balls of measure $V_0/k$, we have
\[\lim_{k\to+\infty} \lambda_m(\Omega_k)=0.\]
Moreover, for $n=2$, they proved that balls are stationary, in the sense of shape variations, among sets of fixed volume (when $m\ne m_0$) if and only if $m>m_0$. The problem has also been recently studied in \cite{BBK25} in dimensions $n=1,3$ among convex sets of given volume, adding a lower bound constraint on the functions in $\mathcal{H}_m$, proving the existence of an optimal shape.\medskip

 In \cite{HLL24} (see also \cite[Theorem 1.9]{HLLY24}) the following generalisation to \autoref{sym-break} was proved

\begin{teor}\label{sym-break2}
    Let $\Omega\subset\R^n$ be a bounded Lipschitz domain. Then there exists $m_0>0$ such that, when $m<m_0$, the solutions to the auxiliary minimisation problem \eqref{minEigaux} vanish on a subset of the boundary with positive $\Hn$-measure.
\end{teor}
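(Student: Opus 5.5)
We argue by contradiction, combining the Euler--Lagrange equation of \eqref{minEigaux} with the asymptotics of \autoref{proplambdam}, and mimicking the second half of the proof of \autoref{sym-break}. Fix $m>0$ and let $\bar u_m\ge0$ be an $L^2(\Omega)$-normalised solution to \eqref{minEigaux}. Suppose its trace is positive $\Hn$-a.e.\ on $\partial\Omega$.

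\textbf{Step 1: the equation satisfied by $\bar u_m$.} Since the trace is a.e.\ positive, $\int_{\partial\Omega}\abs{\bar u_m+t\varphi}$ is differentiable at $t=0$ with derivative $\int_{\partial\Omega}\varphi$. Hence $\bar u_m$ solves \eqref{u>0} weakly on $\Omega$:
\[
-\Delta \bar u_m=\lambda_m(\Omega)\bar u_m \ \text{ in }\Omega, \qquad \partial_\nu \bar u_m=-c_m:=-\tfrac1m\int_{\partial\Omega}\bar u_m \ \text{ on }\partial\Omega .
\]
In particular $\bar u_m$ is an eigenfunction, with eigenvalue $\lambda_m(\Omega)$, of the self-adjoint operator associated with the quadratic form
\[
Q_m(v)=\int_\Omega\abs{\nabla v}^2+\frac1m\Big(\int_{\partial\Omega}v\Big)^2 .
\]
This is a rank-one positive perturbation of the Neumann form.

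\textbf{Step 2: a competing eigenvalue below $\lambda_m$.} Let
\[
\mu^*(\Omega)=\min\Set{\int_\Omega\abs{\nabla w}^2 \colon \int_\Omega w^2=1,\ \int_{\partial\Omega}w=0}.
\]
Every admissible $w$ satisfies $Q_m(w)=\int_\Omega\abs{\nabla w}^2$. Moreover, the span of the first two Neumann eigenfunctions contains a nonzero function with zero boundary mean, so $\mu^*(\Omega)\le\mu_2^N(\Omega)$. By the Friedlander--Filonov inequality, $\mu_2^N(\Omega)<\lambda^D(\Omega)$. By \autoref{proplambdam}, $\lambda_m(\Omega)\to\lambda^D(\Omega)$ as $m\to0$. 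So there is $m_0$ such that $\lambda_m(\Omega)>\mu_2^N(\Omega)\ge\mu^*(\Omega)$ for $m<m_0$. For such $m$ the form $Q_m$, restricted to the $Q_m$- and $L^2$-orthogonal complement of $\bar u_m$, still has an eigenfunction $w$ with eigenvalue $q<\lambda_m(\Omega)$. Concretely, take the first $Q_m$-eigenfunction: its eigenvalue is at most $\mu^*$, and it is $L^2$-orthogonal to $\bar u_m$ because the two eigenvalues are distinct.

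\textbf{Step 3: contradiction via a perturbation.} Test $J_m$ with $\bar u_m+\delta w$. Orthogonality kills the cross terms, so
\[
Q_m(\bar u_m+\delta w)=\lambda_m(\Omega)+\delta^2 q, \qquad \int_\Omega(\bar u_m+\delta w)^2=1+\delta^2 .
\]
The numerator of $J_m$ exceeds $Q_m$ only through
\[
\frac1m\Big[\Big(\int_{\partial\Omega}\abs{\bar u_m+\delta w}\Big)^2-\Big(\int_{\partial\Omega}(\bar u_m+\delta w)\Big)^2\Big]\le C\int_{\{\bar u_m<\abs{\delta w}\}\cap\partial\Omega}\abs{\delta w}\,d\Hn .
\]
If this error is $o(\delta^2)$, then
\[
J_m(\bar u_m+\delta w)=\frac{\lambda_m(\Omega)+\delta^2 q+o(\delta^2)}{1+\delta^2}<\lambda_m(\Omega)
\]
for small $\delta$, contradicting minimality. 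Hence for $m<m_0$ the trace of every minimiser vanishes on a set of positive $\Hn$-measure.

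\textbf{Main obstacle.} Mere a.e.\ positivity of the trace gives only $o(\delta)$ for the error term, not $o(\delta^2)$. To close the gap I would first try to show that, under the contradiction hypothesis, $\bar u_m$ is bounded below on $\partial\Omega$. The approach is to use the constant Neumann datum $-c_m<0$ together with boundary H\"older regularity and a Hopf-type or weak Harnack argument up to the Lipschitz boundary, giving $\inf_{\partial\Omega}\bar u_m>0$. The function $w$ is bounded, since it has constant Neumann data and bounded domain. Then the error set $\{\bar u_m<\abs{\delta w}\}$ is empty for $\delta$ small, and the error vanishes identically.

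If such a lower bound is unavailable, the fallback is to perturb with $w\eta$ instead of $w$, where $\eta$ is a cutoff supported where $\bar u_m\ge\eta_0>0$. Since the trace is positive a.e., this set has almost full measure, and its complement can be made arbitrarily small. The price is accepting an $O(\Hn(\{\bar u_m<\eta_0\}))$ loss in the spectral gap $\lambda_m(\Omega)-q$, which stays positive uniformly for $m<m_0$.
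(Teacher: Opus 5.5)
Your Steps 1 and 2 are sound and follow the route the paper intends. The survey only says the proof is ``analogous to the first part'' of the proof of \autoref{sym-break}, with $k_1(\Omega)$ replacing the Neumann eigenvalue. Your $\mu^*$ is exactly $k_1(\Omega)$, and your Friedlander--Filonov bound $k_1(\Omega)\le\mu_2^N(\Omega)<\lambda^D(\Omega)$ cleanly justifies the choice of $m_0$. Orthogonality is not even needed. By the Euler--Lagrange equation (with $\norma{\bar u_m}_{L^2}=1$), whenever $\bar u_m+\delta\tilde w\ge0$ on $\partial\Omega$ one has $J_m(\bar u_m+\delta\tilde w)-\lambda_m(\Omega)=\delta^2\bigl(Q_m(\tilde w)-\lambda_m(\Omega)\norma{\tilde w}_{L^2}^2\bigr)/\norma{\bar u_m+\delta\tilde w}_{L^2}^2$. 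So everything hinges on the sign condition on $\partial\Omega$, and that is where there is a genuine gap, the one you flag yourself. Under the contradiction hypothesis you only know $\bar u_m>0$ $\Hn$-a.e. Your error term is then at most $C\delta\,\Hn(\{0<\bar u_m<\delta\norma{w}_\infty\}\cap\partial\Omega)$, and nothing in your argument makes this $o(\delta^2)$. In the ball the issue never arises: positive solutions of \eqref{u>0} are multiples of the radial one, so their trace is a positive constant. For a general Lipschitz domain there is no such mechanism, so an additional ingredient is required.

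Neither of your remedies supplies it.
\begin{itemize}
\item Remedy (a) fails because the datum $\partial_\nu\bar u_m=-c_m<0$ has exactly the sign that allows boundary zeros. Hopf's lemma at a boundary minimum of a positive supersolution predicts $\partial_\nu u<0$, which is what is prescribed. The boundary weak Harnack inequality bounds $\inf u$ below only by an average of $u$ minus a multiple of $r\,c_m$, which is useless here. Concretely, take a flat patch of $\partial\Omega=\{x_n=0\}$ with $\Omega=\{x_n<0\}$. The function $u=\frac{c}{\sqrt\lambda}\sin(-\sqrt\lambda\,x_n)$ is positive, solves $-\Delta u=\lambda u$ with $\partial_\nu u=-c$, and vanishes on the whole patch. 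So no local Hopf, Harnack or regularity argument can give $\inf_{\partial\Omega}\bar u_m>0$.
\item Remedy (b) fails because the loss caused by the cutoff is $\int_\Omega w^2\abs{\nabla\eta}^2$ plus a cross term. This is governed by the capacity of the boundary set where $\eta$ must vanish, not by its $\Hn$-measure. For $n\ge3$, a boundary cap of radius $r$ has measure of order $r^{n-1}$ but capacity of order $r^{n-2}$. As $\eta_0\to0$ the loss does not vanish unless $Z=\{\bar u_m=0\}\cap\partial\Omega$ has zero capacity. $\Hn(Z)=0$ does not imply this: take a Cantor-type subset of $\partial\Omega$ of Hausdorff dimension strictly between $n-2$ and $n-1$.
\end{itemize}

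What is missing is one of the following:
\begin{itemize}
\item a proof that $Z$ is capacity-negligible, after which your cutoff argument works via the identity above and the continuity of $\bar u_m$ up to the boundary;
\item a quantitative bound $\Hn(\{0<\bar u_m<t\}\cap\partial\Omega)=o(t)$;
\item a global argument showing that the trace is bounded away from zero.
\end{itemize}
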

The proof of the result is analogous to the first part of the proof of \autoref{sym-break}, where instead of the Neumann eigenvalue, they considered
\[k_1(\Omega)=\min\Set{\dfrac{\displaystyle\int_\Omega \abs{\nabla v}^2\,dx}{\displaystyle\int_\Omega v^2\,dx}\colon\,v\in H^1(\Omega)\setminus\set{0},\,\int_{\partial\Omega}v\,d\Hn=0}.\]
Finally, the authors conjectured that, denoting by $m_0(\Omega)$ the largest value of $m_0$ such that \autoref{sym-break2} holds, then 
\[m_0(\Omega)\ge m_0(\Omega^*),\]
where $\Omega^*$ is the ball of the same volume. That is, among sets of given volume, the ball should be the set for which the symmetry-breaking phenomenon is less likely to happen. To our knowledge, such a problem remains open. 

\subsubsection*{Acknowledgements} 
We would like to thank Prof. Dr. S\"{o}eren Bartels for providing us with the routines needed to generate \autoref{fig4}.\medskip

The three authors are members of Gruppo Nazionale per l’Analisi Matematica, la Probabilità e le loro Applicazioni (GNAMPA) of Istituto Nazionale di Alta Matematica (INdAM).

\printbibliography[heading=bibintoc]

\end{document}